\documentclass{article}


\usepackage[preprint]{neurips_2025}




\usepackage[utf8]{inputenc} 
\usepackage[T1]{fontenc}    
\usepackage{hyperref}       
\usepackage{url}            
\usepackage{booktabs}       
\usepackage{amsfonts}       
\usepackage{nicefrac}       
\usepackage{microtype}      
\usepackage{xcolor}         
\usepackage{algorithm}
\usepackage{algpseudocode}
\usepackage{amsmath, amsthm, amssymb}
\usepackage{mathtools}
\usepackage{xspace}
\usepackage{thmtools}
\usepackage[noabbrev,capitalize]{cleveref}
\usepackage{todonotes}
\usepackage{caption}
\usepackage{subcaption}
\usepackage{thmtools}

\newtheorem{theorem}{Theorem}
\newtheorem{corollary}[theorem]{Corollary}
\newtheorem{lemma}[theorem]{Lemma}
\newtheorem{proposition}[theorem]{Proposition}
\theoremstyle{definition}

\theoremstyle{remark}
\newtheorem{remark}{Remark}

\DeclareMathOperator*{\argmin}{argmin}
\DeclareMathOperator*{\argmax}{argmax}
\DeclareMathOperator{\conv}{conv}

\DeclareMathOperator{\im}{im}

\newcommand{\innp}[1]{\left\langle #1 \right\rangle}

\newcommand{\mA}{\mathbf{A}}
\newcommand{\mP}{\mathbf{P}}
\newcommand{\mQ}{\mathbf{Q}}
\newcommand{\mH}{\mathbf{H}}
\newcommand{\mV}{\mathbf{V}}
\newcommand{\mW}{\mathbf{W}}
\newcommand{\mX}{\mathbf{X}}

\newcommand{\vx}{\mathbf{x}}
\newcommand{\vxtt}{\mathbf{x}_{t+1}}
\newcommand{\va}{\mathbf{a}}

\newcommand{\vd}{\mathbf{d}}

\newcommand{\cx}{\mathcal{X}}

\newcommand{\ch}{\mathcal{H}}
\newcommand{\cF}{\mathcal{F}}
\newcommand{\bch}{\boldsymbol{\mathcal{H}}}

\newcommand{\vxt}{\mathbf{\tilde{x}}}

\newcommand{\vy}{\mathbf{y}}
\newcommand{\vz}{\mathbf{z}}
\newcommand{\vvt}{\mathbf{v}_t}
\newcommand{\vvv}{\mathbf{v}}
\newcommand{\vw}{\mathbf{w}}

\newcommand{\vb}{\mathbf{b}}
\newcommand{\vc}{\mathbf{c}}
\newcommand{\vg}{\mathbf{g}}
\newcommand{\vu}{\mathbf{u}}
\newcommand{\vs}{\mathbf{s}}
\newcommand{\vlambda}{\boldsymbol{\lambda}}
\newcommand{\vmu}{\boldsymbol{\mu}}
\newcommand{\vrho}{\boldsymbol{\rho}}
\newcommand{\defeq}{\stackrel{\mathrm{\scriptscriptstyle def}}{=}}

\newcommand{\norm}[1]{\left\| #1 \right\|}

\newcommand{\Xset}{{\ensuremath{\mathcal{X}}}\xspace}

\DeclareMathOperator{\dist}{dist}
\newcommand{\lt}{{\lambda_t}}
\newcommand{\ltt}{{\lambda_{t+1}}}

\newcommand{\xci}{\bigtimes_{i\in I} \Xset_i}
\newcommand{\cci}{\bigcap_{i\in I} \Xset_i}
\newcommand{\wF}{\widetilde{F}}
\newcommand{\wG}{\widetilde{G}}
\newcommand{\wH}{\widetilde{H}}

\DeclareMathOperator{\affhull}{aff}
\DeclareMathOperator{\relint}{relint}

\newcommand{\R}{\mathbb{R}}
\newcommand{\N}{\mathbb{N}}

\renewcommand{\leq}{\leqslant}

\renewcommand{\geq}{\geqslant}

\newif\ifshowappendix
\showappendixtrue

\newcommand{\refappendix}[1]{\ifshowappendix #1\else the appendix\fi}

\newif\ifarxiv
\arxivtrue
\newif\ifneurips
\neuripsfalse

\title{Efficient Quadratic Corrections\\for Frank-Wolfe Algorithms}

%

\author{%
  Jannis Halbey
  \\
  Zuse Institute Berlin \& TU Berlin\\
  Berlin, Germany\\
  \texttt{halbey@zib.de} \\
  \And
  Seta Rakotomandimby \\
  \'{E}cole nationale des ponts et chauss\'{e}es, IP Paris\\
  Champs-sur-Marne, France \\
  \texttt{seta.rakotomandimby@enpc.fr} \\
  \AND
  Mathieu Besançon \\
  Universit\'{e} Grenoble Alpes, Inria, LIG, CNRS \\
  Grenoble, France \\
  \texttt{mathieu.besancon@inria.fr} \\
  \And
  S\'{e}bastien Designolle \\
  Zuse Institute Berlin\\
  Berlin, Germany\\
  \texttt{designolle@zib.de} \\
  \And
  Sebastian Pokutta \\
  Zuse Institute Berlin \& TU Berlin \\
  Berlin, Germany \\
  \texttt{pokutta@zib.de} \\
}

\begin{document}

\maketitle

\begin{abstract}
    We develop a Frank-Wolfe algorithm with corrective steps, generalizing previous algorithms including Blended Conditional Gradients, Blended Pairwise Conditional Gradients, and Fully-Corrective Frank-Wolfe.
    For this, we prove tight convergence guarantees together with an optimal face identification property.
    Furthermore, we propose two highly efficient corrective steps for convex quadratic objectives based on linear optimization or linear system solving, akin to Wolfe's Minimum-Norm Point algorithm, and prove finite-time convergence under suitable conditions.
    Beyond optimization problems that are directly quadratic, we revisit two algorithms, Split Conditional Gradient and Second-Order Conditional Gradient Sliding, which can leverage quadratic corrections to accelerate the solution of their quadratic subproblems.
    We show improved convergence rates for the first and prove broader applicability for the second.
    Finally, we demonstrate substantial computational speedups for Frank-Wolfe-based algorithms with quadratic corrections across the considered problem classes.
\end{abstract}

\section{Introduction}
In this paper, we consider convex constrained optimization problems of the form
$$
    \min_{\vx \in \Xset{}} f(\vx),
$$
where $\Xset{}$ is a compact, convex set and $f$ is a convex, differentiable function.
A particularly interesting family of first-order methods for this setting is the class of
Conditional Gradient (CG) or Frank-Wolfe (FW) methods \citep{polyak66cg,frank1956algorithm}.
One major advantage of these methods is that they access $\Xset{}$ only through a Linear Minimization Oracle (LMO), which solves linear subproblems over the set.
The methods then construct solutions as convex combinations of the vertices returned by the LMO.
In particular, these methods avoid costly projection steps while ensuring an $\mathcal{O}(1/t)$ convergence rate in general and linear convergence rates in specific settings.

Over the years, several FW variants have been designed to exploit the so-called \emph{active set}, i.e., the vertices used to form the current iterate,
the oldest and best-known variant being the Away-step Frank-Wolfe (AFW) \citep{guelat1986some}.
By maintaining the convex combination, these methods significantly reduce the number of LMO calls.
Furthermore, they achieve accelerated convergence rates on polytopes for sharp functions, whereas standard FW remains at a rate of $\Omega(1/t)$.
Additionally, they have been shown to produce sparse solutions (constructed as convex combinations of a few iterates), which motivated strong interest in FW methods for sparse optimization and machine learning applications.
The Fully-Corrective Frank-Wolfe (FCFW) algorithm, presented in some form in \citet{wolfe1976normpoint,holloway1974extension} and named in \citet{lacoste15}, drives the corrective paradigm to its limit
by computing a minimizer of the objective over the convex hull of the active set after each FW step.
Besides its favorable convergence properties and sparsity of solutions, FCFW is also closely related to column-generation and cutting-plane methods \citep{vinyes2017fast,zhou2018limitedKelly}.
Nonetheless, FCFW is often computationally impractical since minimizing over the convex hull of the active set can be computationally as hard as the original problem.

In this work, we generalize the idea of corrective steps on the active set to a new framework, \emph{Corrective Frank-Wolfe (CFW)},
and design two new \emph{Quadratic Correction (QC)} steps tailored to quadratic objective functions.
While many applications involve quadratic objectives, such as matrix recovery \citep{garber2019improved}, spectral clustering \citep{ding2022understanding}, and entanglement detection \citep{liu2025toolbox},
we also revisit two FW-based algorithms that benefit from QC and strengthen their theoretical guarantees (with improved rates and applicability to new function classes).

\emph{Split Conditional Gradient (SCG)} \citep{woodstock2025splitting} tackles optimization problems over finite intersections of convex sets via splitting, avoiding the potential intractable linear minimization over the intersection.
SCG is inspired by the Alternating Linear Minimization (ALM) algorithm \citep{braun2023alternatinglinearminimizationrevisiting}
and solves the problem by alternating between update steps for iterates on the respective sets.
Each iterate minimizes an auxiliary quadratic objective, incurring an increasing penalty on the squared distance to the other iterate.
Besides the computational benefits of the quadratic corrections, we also prove a faster convergence rate for SCG.

Additionally, we consider \emph{Second-Order Conditional Gradient Sliding (SOCGS)} \citep{carderera2020second} which follows an inexact Newton approach, adapting the Conditional Gradient Sliding (CGS) method of \citet{lan2016conditional} with a quadratic approximation of the original problem at each iteration.
The method enjoys very fast convergence rates and has been shown to outperform first-order Frank-Wolfe variants on various problems.
However, the main computational bottleneck of SOCGS is solving the quadratic subproblems, which can be significantly accelerated using our proposed quadratic corrections.
Furthermore, we establish a global linear convergence rate of SOCGS for generalized self-concordant functions \citep{sun_generalized_2019} without requiring global Lipschitz smoothness or strong convexity.

\subsection*{Related work}
The literature on Frank-Wolfe methods is vast, and we only cover a few related methods here; for a more comprehensive overview, we refer to \citet{cg_survey}.
The corrective framework is inspired by ideas from Blended Conditional Gradients (BCG) \citep{pok18bcg} and Blended Pairwise Conditional Gradients (BPCG) \citep{tsuji2022pairwise}.
Furthermore, \citet{lacoste15} present, besides the already mentioned AFW and FCFW, Pairwise Frank-Wolfe (PFW) and an approximate variant of FCFW.
Additionally, they introduce a generalized version of the Minimum-Norm Point (MNP) algorithm \citep{wolfe1976normpoint} for corrections in approximate FCFW, called MNP-Correction.
Finally, we follow the idea of \citet{pok17lazy} to introduce a lazified variant of the corrective step framework.

\subsection*{Contributions}
Our contributions can be summarized as follows:

\paragraph{Corrective Frank-Wolfe}
We introduce Corrective Frank-Wolfe (CFW), a new framework for FW variants that perform corrective steps on the active set and prove linear convergence for smooth, sharp functions over polytopes.
We show that active-set-based methods like AFW, BCG, and BPCG fit into the corrective step framework.
Furthermore, we introduce a lazified variant of CFW, which avoids LMO calls during iterations with corrective steps.
Lastly, we prove that CFW identifies the optimal face of a polytope in finite time under strict complementarity.

\paragraph{Quadratic corrections}
We introduce two new quadratic correction steps tailored for quadratic objective functions.
The first one, QC-LP, is a direct linear program solving the quadratic problem over the convex hull of the active set via relaxed optimality conditions.
The second type, QC-MNP, is a specialized variant of the Minimum-Norm Point (MNP) algorithm \citep{wolfe1976normpoint}, solving the quadratic problem over the affine hull.
Both steps are approximate variants of an FCFW step, but require only solving a linear program or linear system, respectively.
Additionally, we propose a hybrid method that combines these corrections with the local pairwise steps of BPCG \citep{tsuji2022pairwise}, yielding a highly efficient update scheme for quadratic problems.
Finally, we show that CFW with QC-LP and QC-MNP converges in finite time.

\paragraph{Theoretical results for SCG and SOCGS}
Besides the practical benefits of the new quadratic corrections for SCG and SOCGS, we also present new theoretical results.
First, we prove that SCG converges for a smaller step size, yielding a faster rate of $\mathcal{O}(1/\sqrt{t})$
compared to the original rate of $\mathcal{O}(\log(t)/\sqrt{t})$ provided in \citet{woodstock2025splitting}.
This closes the gap to the complexity bound of $\mathcal{O}(1/\sqrt{t})$ for the underlying non-smooth problem \citep{yudin1983problem}.
Second, we show that the Projected Variable-Metric method in SOCGS converges globally for generalized self-concordant functions.

\paragraph{Experiments}
We demonstrate the excellent computational benefits of the new quadratic corrections on a variety of problems, including sparse regression, entanglement detection, projections onto the Birkhoff polytope, and tensor completion problems.
Further experiments show that quadratic corrections also improve the convergence of ALM and SCG.
For SOCGS, the quadratic corrections enable us to solve the quadratic subproblem efficiently.

\subsection*{Preliminaries}
Let $\Xset$ be a compact convex set with diameter $D \defeq \max_{\vx, \vy \in \Xset} \norm{\vx - \vy}$,
$\Xset^*$ be the set of minimizers of $f$ over $\Xset$,
and $V(\Xset)$ be the set of extreme points of $\Xset$.
If $\Xset$ is a polytope, we denote its pyramidal width by $\delta$ \citep{lacoste15} and the minimal face containing $\Xset^*$ by $\cF^*$.
Strict complementarity holds if there exists $\rho > 0$ such that for all $\vx^* \in \Xset^*$ and $\vvv \in V(\Xset)$, we have
$$
    \langle \nabla f(\vx^*), \vvv - \vx^* \rangle
    \begin{cases}
        \geq \rho & \text{if } \vvv \in V(\Xset) \setminus \cF^*, \\
        = 0 & \text{if } \vvv \in V(\Xset) \cap \cF^*.
    \end{cases}
$$

We denote the active set, i.e., the set of vertices used by an FW algorithm, by $S \subset \Xset$.
For a given weight vector $\vlambda$, we denote the weight of an atom $\vvv \in S$ by $\vlambda_\vvv$.
For a given convex combination $\vx = \sum_{\vvv \in S} \vlambda_\vvv \vvv$, we denote the corresponding weight vector by $\vlambda(\vx) \defeq \vlambda$.
Furthermore, we denote the convex hull of $S$ by $\conv(S)$, the affine hull of $S$ by $\affhull(S)$, and the boundary of a set $M$ by $\partial M$.

Let $f \colon \Xset{} \to \R$ be a differentiable, convex function.
The function $f$ is $L$-\textit{smooth} if
$$
    f(\vy) - f(\vx) \leq \langle \nabla f(\vx), \vy - \vx \rangle + \frac{L}{2} \norm{\vy - \vx}^2 \quad \forall \vx, \vy \in \Xset.
$$
The function $f$ is $\mu$-\textit{strongly convex} if
$$
    f(\vy) - f(\vx) \geq \langle \nabla f(\vx), \vy - \vx \rangle + \frac{\mu}{2} \norm{\vy - \vx}^2 \quad \forall \vx, \vy \in \Xset.
$$
In the following, we will use the notion of sharpness, generalizing strong convexity.
Let $f^*$ denote the optimal value of $f$ over $\Xset{}$.
A proper convex function $f$ is $(c,\theta)$-\emph{sharp} with $c > 0$ and $\theta \in (0, 1]$ if for all $\vx \in \Xset{}$,
$$
    c\big(f(\vx) - f^*\big)^\theta \geq \min_{\vy \in \Xset^*} \norm{\vx - \vy}.
$$
In particular, a $\mu$-strongly convex function is $\left(\frac{\sqrt{2}}{\sqrt{\mu}}, \frac{1}{2}\right)$-sharp over any compact set.

\section{Corrective Frank-Wolfe}
\label{section: corrective frank wolfe}
We now present the Corrective Frank-Wolfe (CFW) algorithm in \cref{alg:correctivefw}, its convergence analysis, and specialized corrections for quadratic objectives.

CFW provides a general framework for FW methods that perform corrective steps on the active set.
Unlike approximate FCFW, CFW does not perform an FW step in each iteration but instead compares the so-called local pairwise gap $\innp{\nabla f(\vx_t), \va_t -\vs_t}$
with the global Frank-Wolfe gap $\innp{\nabla f(\vx_t), \vx_t - \vvv_t}$ to decide whether to perform a corrective step or an FW step.
\Cref{alg:corrective_step} is a template for corrective steps that ensures convergence of CFW.
\begin{algorithm}[H]
\caption{Corrective Frank-Wolfe (CFW)}\label{alg:correctivefw}
\begin{algorithmic}[1]
\Require convex smooth function $f$, starting point $\vx_0 \in V(\Xset{})$.
\State $S_0 \gets \{\vx_0\}$
\For{$t=0$ to $T-1$}
    \State $ \va_t \gets \argmax_{\vvv \in S_t} \langle \nabla f(\vx_t), \vvv \rangle $ \Comment{away vertex}
    \State $ \vs_t \gets \argmin_{\vvv \in S_t} \langle \nabla f(\vx_t), \vvv \rangle $ \Comment{local FW}
    \State $ \vvv_t \gets \argmin_{\vvv \in V(\Xset)} \langle \nabla f(\vx_t), \vvv \rangle $ \Comment{global FW}
    \If{ $ \langle \nabla f(\vx_t), \va_t - \vs_t \rangle \geq \langle \nabla f(\vx_t), \vx_t - \vvv_t \rangle $ }
        \State $ \vx_{t+1}, S_{t+1} \gets$ \hyperref[alg:corrective_step]{CS}$(S_t, \vx_t, \va_t, \vs_t)$
    \Else
        \State $ \gamma_t \gets \argmin_{\gamma \in [0,1]} f(\vx_t - \gamma (\vx_t - \vvv_t))$
        \State $ \vx_{t+1} \gets \vx_t - \gamma_t (\vx_t - \vvv_t)$
        \State $S_{t+1} \gets S_t \cup \{\vvv_t\}$
    \EndIf
\EndFor
\end{algorithmic}
\end{algorithm}
\vspace{-1.5em}
\begin{algorithm}[H]
    \caption{Corrective Step CS($S, \vx, \va, \vs$)}\label{alg:corrective_step}
    \begin{algorithmic}[0]
        \Require $S \subset \Xset{}$, $\vx, \va, \vs \in \Xset{}$
        \Ensure $S' \subseteq S$, $\vx' \in \conv(S')$ satisfying
        \State (i) $f(\vx') \leq f(\vx)$ and $S' \subsetneq S$ or \Comment{drop step}
        \State (ii) $f(\vx) - f(\vx') \geq \frac{\langle \nabla f(\vx), \va - \vs \rangle^2}{2 L D^2}$ \Comment{descent step}
    \end{algorithmic}
\end{algorithm}
\vspace{-1em}
A corrective step is either a descent step that yields sufficient primal progress or a drop step that decreases the size of the active set without increasing the objective value.
In contrast to approximate FCFW, we omit any conditions on the away gap and compare the primal progress only to the local pairwise gap rather than the global FW step.
The update steps in BCG, BPCG, and FCFW meet the criteria of a corrective step. The proof is given in \refappendix{\cref{section: proofs for cfw}}.
\begin{restatable}{proposition}{propCorrectiveStep}
    \label{prop:corrective_step}
    \cref{alg:local_pairwise_step}, \cref{alg:fcfw}, and the simplex gradient descent step from \citet{pok18bcg} satisfy the criteria of \cref{alg:corrective_step}.
\end{restatable}
\vspace{-2em}
\begin{minipage}[t]{0.48\textwidth}
    \begin{algorithm}[H]
        \caption{Local Pairwise Step LPS($S, \vx, \va, \vs$)}\label{alg:local_pairwise_step}
        \begin{algorithmic}[1]
            \Require $S \subset \Xset{}$, $\vx, \va, \vs \in \Xset{}$
            \Ensure $S', \vx'$
            \State $\gamma^* \gets \argmin_{\gamma \in [0,\vlambda_\va(\vx)]} f(\vx + \gamma (\vs - \va))$
            \State $\vx' \gets \vx + \gamma^* (\vs - \va)$
            \State $S' \gets \begin{cases}
                S \setminus \{\va\} & \text{if } \gamma^* = \vlambda_\va(\vx), \\
                S & \text{otherwise.}
            \end{cases}$
        \end{algorithmic}
    \end{algorithm}
\end{minipage}
\hfill
\begin{minipage}[t]{0.48\textwidth}
    \begin{algorithm}[H]
        \caption{Fully-Corrective Step FCS($S$)}\label{alg:fcfw}
        \begin{algorithmic}[1]
            \Require $S \subset \Xset{}$
            \Ensure $S', \vx'$
            \State $\vx' \gets \argmin_{\vx \in \conv(S)} f(\vx)$
            \State $S' \gets \{\vvv \in S \mid \vlambda_\vvv(\vx') > 0\}$
        \end{algorithmic}
    \end{algorithm}
\end{minipage}

This result enables designing new corrective steps without theoretical verification.
The conditions of a drop step can be verified easily at runtime.
For the descent step, one can compare the primal value of the new point with the result of the local pairwise step.
If neither of the conditions is met, one can perform the local pairwise step, which provides an inexpensive fallback option.
Furthermore, one can also use hybrid methods that combine different steps to balance computational effort and progress.

\begin{remark}
    The result of \cref{prop:corrective_step} also applies to AFW if the gap comparison in \cref{alg:correctivefw} uses the away gap instead of the local pairwise gap.
    We choose the pairwise gap since it can be computed with the same complexity and leads to fewer FW steps.
\end{remark}

\subsection{Convergence analysis}

In the following, we will state two convergence results for CFW and its lazified variant, as well as a result on active set identification.
The proofs are independent of the specific implementation of the corrective step, thereby simplifying the verification of these properties for new variants.

\begin{restatable}{theorem}{thmCfwConv}\label{theorem: CFW linear convergence}
    Let $\Xset$ be a convex feasible set with diameter $D$ and $f$ be a convex, $L$-smooth function over $\Xset$.
    Consider the sequence $\{\vx_t\}_{t=0}^T \subset \Xset$ obtained by \cref{alg:correctivefw}.
    Then we have
    \begin{equation}
        \label{eq: primal gap}
        f(\vx_T) - f^* \leq \frac{4L D^2}{T}.
    \end{equation}
    If additionally $f$ is $(c,\frac12)$-sharp and $\Xset$ is a polytope with pyramidal width $\delta$, then
    \begin{equation}
        \label{eq: linear convergence cfw}
        f(\vx_T) - f^* \leq (f(\vx_0) - f^*) \exp(-c_{f,\Xset} T ),
    \end{equation}
    where $c_{f,\Xset} \defeq \min \left\{ \frac{1}{4}, \frac{\delta^2}{16Lc^2 D^2} \right\}$.
    \ifarxiv
        If $f$ is $(c, \theta)$-sharp with $\theta < \frac12$, then
        \begin{equation}
            \label{eq: linear convergence cfw general}
            f(\vx_T) - f^* = \mathcal{O} \left( \frac{1}{T^{\frac{1}{1-2\theta}}} \right).
        \end{equation}
    \fi
\end{restatable}
The proof is given in \refappendix{\cref{section: proofs for cfw}}.

The CFW algorithm needs to call the LMO at each iteration to compare the FW gap with the local pairwise gap,
even if the FW step is not taken after all.
We follow the approach of \citet{pok17lazy} and adopt a lazified version of CFW to avoid this requirement.

The Lazified Corrective Frank-Wolfe (LCFW) method replaces the FW gap with an estimate $\Phi_t$, which is updated if the gap becomes smaller than $\Phi_t / J$ for some parameter $J \geq 1$.
The convergence results are stated in \cref{theorem: lcfw convergence}, and we present its proof together with the algorithm in \refappendix{\cref{section: lcfw}}.

\begin{restatable}{theorem}{thmLcfwConv}\label{theorem: lcfw convergence}
    Let $\Xset$ be a convex feasible set with diameter $D$ and $f$ be a convex, $L$-smooth function over $\Xset$.
    Consider the sequence $\{\vx_t\}_{t=0}^T \subset \Xset$ obtained by the \cref{alg:lcfw}.
    Then we have
    \begin{equation}
        \label{eq: sublinear convergence lcfw}
        f(\vx_T) - f^* = \mathcal{O}\left(\frac{1}{T}\right).
    \end{equation}
    If additionally $f$ is $(c,\frac12)$-sharp and $\Xset$ is a polytope with pyramidal width $\delta$, then
    \begin{equation}
        \label{eq: linear convergence lcfw}
        f(\vx_T) - f^* = \mathcal{O}\left(\exp(-aT )\right),
    \end{equation}
    for some constant $a > 0$ independent of $T$.
    \ifarxiv
        If $f$ is $(c, \theta)$-sharp with $\theta < \frac12$, then
        \begin{equation}
            \label{eq: linear convergence lcfw 2 general}
            f(\vx_T) - f^* = \mathcal{O} \left( \frac{1}{T^{\frac{1}{1-2\theta}}} \right).
        \end{equation}
    \fi
\end{restatable}

Finally, we state a theorem that ensures identification of the optimal face by CFW in finite time under strict complementarity.
Identifying the optimal face $\cF^*$ of $\Xset$ is crucial, as it simplifies the optimization problem to one over $\cF^*$ instead of $\Xset$.
An early result by \citet{guelat1986some} proves this property for AFW under the additional assumption of strong convexity of $f$.
\citet{bomze2020active} extend this result to AFW for general convex functions and \citet{wirth2024pivotingframeworkfrankwolfealgorithms} prove the same result for BPCG. 
The proof of \cref{thm:face-identification} is given in \refappendix{\cref{section: proofs for cfw}}.

\begin{restatable}{theorem}{thmFaceIdentification}\label{thm:face-identification}
    Let $\Xset$ be a polytope and let $f$ be a convex, $L$-smooth function over $\Xset$. Assume that strict complementarity holds.
    Consider the sequence $\{\vx_t\}_{t=0}^T$ generated by \cref{alg:correctivefw}. Then there exists an iteration $\tilde T$ such that
    $$
        \vx_t \in \cF^* \quad \text{for all } ~t \geq \tilde T.
    $$
\end{restatable}

\subsection{Quadratic corrections}
\label{subsection: quadratic corrections}
In this section, we introduce two types of corrective steps for the CFW algorithm that are especially suited for quadratic problems: Quadratic~Correction~LP~(QC-LP)
 and Quadratic~Correction~MNP~(QC-MNP).
Here, we assume that $f$ is a convex quadratic function, i.e., can be written as
$
    f(\vx) = \frac{1}{2} \langle \vx, \mA \vx \rangle + \langle \vb, \vx \rangle + c,
$
where $\mA$ is a symmetric positive semidefinite matrix. 
This can be extended to general finite-dimensional Hilbert spaces, in which case $\mA$ is a symmetric linear operator.

The quadratic corrections are inspired by FCFW, which solves the minimization problem~$\min_{\vx \in \conv(S)} f(\vx)$ in each iteration.
Unfortunately, minimizing quadratic functions over polytopes such as $\conv(S)$ is computationally demanding (we discuss this in \cref{remark: minimizing quadratic over convex hull}).
To circumvent this difficulty, the QC algorithms instead exploit the finiteness of the active set $S$ and relax this problem into minimizing the function~$f$ over the \emph{affine} hull of~$S$,
\begin{equation}
\label{eq: min over affine hull}
    \min_{\vx \in \affhull(S)} f(\vx) \;
    \left(\leq \min_{\vx \in \conv(S)} f(\vx)\right),
\end{equation}
and then reconstruct a point in $\conv(S)$.

\paragraph{Solving the minimization problem over the affine hull}
Let $\vx_{\affhull}^* \in \argmin_{\vx \in \affhull(S)} f(\vx)$.
First-order optimality conditions imply that $\innp{\nabla f(\vx_{\affhull}^*), \vd} = 0$ for any feasible direction $\vd$.
The feasible directions in $\affhull(S)$ are spanned by $\vd = \vvv - \vw$ for $\vvv,\vw \in S$.
Thus, the first-order optimality conditions are equivalent to the system of equalities,
$$
    \innp{\nabla f(\vx_{\affhull}^*), \vvv - \vw} = 0 \quad \forall ~\vvv, \vw \in S,
$$
which is equivalent to the gradient at $\vx_{\affhull}^*$ being orthogonal to the affine space.
Writing $\vx_{\affhull}^*$ as an affine combination of the atoms in $S$ yields
$\nabla f(\vx_{\affhull}^*) = \mA \vx_{\affhull}^* + \vb = \mA \sum_{\vvv \in S} \vlambda_\vvv (\vx_{\affhull}^*) \vvv + \vb = \mA \mV \vlambda(\vx_{\affhull}^*) + \vb$.
By fixing an anchor~$\vw \in S$, we obtain a linear system,
\begin{equation}
    \label{eq: affin-min}
    \innp{\mA \mV \vlambda + \vb, \vvv - \vw} = 0 \quad \forall ~\vvv \in S \setminus \{\vw\}, \quad
    \sum_{\vvv \in S} \vlambda_\vvv = 1.
\end{equation}

\begin{remark}
\label{remark: minimizing quadratic over convex hull}

Applying first-order optimality conditions
to \(\vx^*_{\conv} \in \arg\min_{\vx \in \conv(S)} f(\vx)\) with the barycentric coordinates yields 
$\innp{\mA \mV \vlambda + \vb, \vvv - \mV \vlambda} \geq 0$ for all $\vvv \in S$,
which is a quadratic inequality system in~$\vlambda$ that can be as hard to solve as the original problem.
However, if $\vx^*_{\conv}$ lies in the relative interior of $\conv(S)$, these quadratic inequalities simplify to linear equalities as in ~\eqref{eq: affin-min}.
\end{remark}

The linear system~\eqref{eq: affin-min} is equivalent to the KKT conditions of \eqref{eq: min over affine hull} and thus sufficient.
As formalized in the following proposition, an affine minimizer exists if the linear term $\vb$ yields no descent in any direction in the affine hull in which $f$ is not curved.
Otherwise, the problem is unbounded and the system is infeasible. The proof is given in \refappendix{\cref{subsection: qc-proofs}}.
\begin{restatable}{proposition}{propFeasibleAffineMin}\label{prop:feasible-affine-min}
    \cref{eq: affin-min} is feasible if and only if $\vb \perp \mathrm{span}(S) \cap \ker(\mA)$. In particular, it is feasible if $\mA$ is positive definite.
\end{restatable}

\paragraph{Quadratic correction through Linear Programs}
To ensure that the new weights~$\vlambda$ are non-negative, we consider two approaches.
The QC-LP approach enforces this directly,
yielding
\begin{equation}
    \label{eq: qc-lp}
    \innp{\mA \mV \vlambda + \vb, \vvv - \vw} = 0 \quad \forall ~\vvv \in S \setminus \{\vw\}, \quad
    \sum_{\vvv \in S} \vlambda_\vvv = 1, \quad \vlambda \geq 0.
\end{equation}
The LP is only feasible if the affine minimizer exists and lies in $\conv(S)$.
Otherwise, we perform a local pairwise step as a computationally cheap fallback option.
The method is stated in \cref{alg:qc-lp}.

\vspace{-2em}
\begin{minipage}[t]{0.48\textwidth}
    \centering
    \begin{algorithm}[H]
        \caption{Quadratic Correction LP}\label{alg:qc-lp}
        \begin{algorithmic}[0]
            \Require $S \subset \Xset{}$, $\vx, \va, \vs \in \Xset{}$
            \Ensure $S', \vx'$
            \State $\vlambda' \gets $ Solve \eqref{eq: qc-lp}
            \If {feasible} \Comment{FCFW step}
                \State $\vx' \gets \sum_{\vvv \in S} \vlambda'_\vvv \vvv$
                \State $S' \gets S$
            \Else \Comment{Fallback step}
                \State $S', \vx' \gets$ \hyperref[alg:local_pairwise_step]{LPS}$(S, \vx, \va, \vs)$
            \EndIf
        \end{algorithmic}
    \end{algorithm}
\end{minipage}
\hfill
\begin{minipage}[t]{0.50\textwidth}
    \centering
    \begin{algorithm}[H]
        \caption{Quadratic Correction MNP}\label{alg:qc-mnp}
        \begin{algorithmic}[0]
            \Require $S \subset \Xset{}$, $\vx, \va, \vs \in \Xset{}$
            \Ensure $S', \vx'$
            \State $\tilde{\vlambda} \gets $ Solve \eqref{eq: affin-min}
            \If {feasible}
                \If {$\tilde{\vlambda} \geq 0$} \Comment{FCFW step}
                    \State $\vx' \gets \sum_{\vvv \in S} \tilde{\vlambda}_\vvv \vvv$
                    \State $S' \gets S$
                \Else \Comment{MNP step}
                    \State $\tau \gets \min \left\{\frac{\vlambda_\vvv(\vx)}{\vlambda_\vvv(\vx) - \tilde{\vlambda}_\vvv}
                    \middle\vert \tilde{\vlambda}_\vvv < \vlambda_\vvv(\vx)\right\}$
                    \State $\vx' \gets \sum_{\vvv \in S} (\tau \tilde{\vlambda}_\vvv + (1-\tau) \vlambda_\vvv(\vx)) \vvv$
                    \State $S' \gets \{\vvv \in S \mid \vlambda_\vvv(\vx') > 0\}$
                \EndIf
            \Else \Comment{Fallback step}
                \State $S', \vx' \gets$ \hyperref[alg:local_pairwise_step]{LPS}$(S, \vx, \va, \vs)$
            \EndIf
        \end{algorithmic}
    \end{algorithm}
\end{minipage}

\paragraph{Quadratic correction through the Minimum-Norm-Point algorithm}

The second approach, QC-MNP, is inspired by the Minimum-Norm-Point algorithm in \citet{wolfe1976normpoint}.
Instead of enforcing the non-negativity directly, one considers the line segment between the weights of the current point and the affine minimizer.
If at least one of the new weights is negative, we perform a ratio test to find the intersection of the line segment with the $|S|$-simplex.
This step will drop at least one atom from the active set, improving sparsity.
However, it can happen that this atom belongs to the optimal face.
Unlike the MNP-Correction step from \citet{lacoste15}, QC-MNP will stop after a single truncation, alleviating the need to solve multiple linear systems in a single iteration.
This also avoids dropping more than one atom per iteration, which can be problematic in some cases.
As for QC-LP, we perform a local pairwise step if there is no affine minimizer.
The standard implementation of QC-MNP is presented in \cref{alg:qc-mnp},
and we derive a more efficient variant for the case of multiple affine minimizers in \refappendix{\cref{subsection: qc-mnp-convex}}.

Both QC-LP and QC-MNP are corrective steps in the sense of \cref{alg:corrective_step}.
Furthermore, an affine minimizer is guaranteed to exist if the current iterate lies in the optimal face.
\begin{restatable}{proposition}{propQcCorrective}\label{prop:qc-corrective}
    QC-LP and QC-MNP satisfy the criteria of corrective steps in \cref{alg:corrective_step}.
\end{restatable}
\begin{restatable}{proposition}{propQcAffineMin}\label{prop:qc-affine-min}
    Let $\Xset{}$ be a polytope and $\cF^*$ be the minimal face of $\Xset{}$ containing the optimal points $\Xset^*$.
    If $S \subset \cF^*$, then $\argmin_{x \in \mathrm{aff}(S)} f(x) \neq \emptyset$.
    If additionally there exists $\vx^* \in \Xset^*$ with $\vx^* \in \mathrm{conv}(S)$, then $\vx^* \in \argmin_{x \in \mathrm{aff}(S)} f(x)$.
\end{restatable}

We would like to emphasize that the sizes of the linear system and the LP do not depend on the ambient dimension but only on the size of the active set.
Furthermore, similarly to \citet{besancon2025improvedalgorithmsnovelapplications}, we can cache the scalar products $\innp{\vvv, \mA \vw}$ and $\innp{\vb, \vvv}$ for $\vvv,\vw \in S$, accelerating the setup of the linear system and LP while only marginally increasing storage.

\begin{remark}
    The linear system in \eqref{eq: affin-min} is usually not symmetric, which can be problematic for solvers like Conjugate Gradient (CG).
    However, subtracting $\mW^\top \mA \vw \mathbf{1}^\top \vlambda = \mW^\top \mA \vw$ from \eqref{eq: affin-min} yields the equivalent symmetric system
    \begin{equation}
        \label{eq: affin-min-symmetric}
        \mW^\top \mA \mW \vmu = - \mW^\top (\mA \vw + \vb),
    \end{equation}
    where the matrix $\mW$ has columns $\vvv - \vw$ for $\vvv \in S \setminus \{\vw\}$.
    The new weights are then given by
    $$
        \vlambda_\vvv = \begin{cases}
            1- \mathbf{1}^\top \vmu & \text{if } \vvv = \vw,\\
            \vmu_\vvv & \text{otherwise}.
        \end{cases}
    $$
\end{remark}

Finally, we come to the main result of this section. As seen in \cref{thm:face-identification},
CFW identifies the optimal face of a polytope in finitely many iterations if strict complementarity holds.
This is crucial for fully-corrective steps as they converge to an optimal solution if the optimal face is identified and the convex hull of the active set contains at least one solution.
Thus, CFW with QC-LP, QC-MNP or FCFW converges to an optimal solution in finitely many iterations.
The proof is given in \refappendix{\cref{subsection: qc-proofs}}.

\begin{restatable}{theorem}{thmFiniteStepConvergence}\label{thm:finite-step-convergence}
    Let $\Xset$ be a polytope and let $f$ be a convex, quadratic function over $\Xset$,
    i.e., $f(\vx) = \frac{1}{2} \innp{\vx, \mA \vx} + \innp{\vb, \vx} + c$ where $\mA$ is a symmetric positive semidefinite matrix.
    Assume that strict complementarity holds.
    Consider the sequence $\{\vx_t\}_{t=0}^T$ generated by \cref{alg:correctivefw} with \cref{alg:fcfw}, \cref{alg:qc-lp}, or \cref{alg:qc-mnp-2}.
    Then, there exists an iteration $\tilde T$ such that $\vx_{\tilde T} \in \Xset^*$.
\end{restatable}

\section{Accelerated algorithms through quadratic corrections}

In this section, we leverage quadratic corrections for two classes of algorithms that require solving quadratic subproblems
and provide additional theoretical results that might be of independent interest.

\subsection{Split Conditional Gradient}

A key limitation of Frank-Wolfe methods is their reliance on a Linear Minimization Oracle.
This becomes particularly problematic for optimization over intersections of convex sets, where no efficient procedure for linear minimization is generally available.
\citet{braun2023alternatinglinearminimizationrevisiting} introduced the Alternating Linear Minimization (ALM) method for finding a point in the intersection of two convex sets $P$ and $Q$, drawing a parallel with von Neumann's alternating projection.
ALM alternates between Frank-Wolfe steps, solving
$
    \min_{\vx \in P, \vy \in Q} \|\vx - \vy\|^2,
$
and therefore avoids projections in an FW-like manner.
\citet{woodstock2025splitting} extended their approach to optimization problems over intersections by penalizing the distance,
$$
    \min_{\vx \in P, \vy \in Q} F_\lambda(\vx, \vy) = f\left(\frac{\vx+\vy}{2}\right) + \frac{\lambda}{2} \|\vx - \vy\|^2.
$$
Their method, Split Conditional Gradient (SCG), performs parallel FW steps on both sets
and can therefore be easily extended to use corrective steps.
Particularly, if the original objective is linear or quadratic, one can leverage quadratic corrections.

Independently of CFW, we prove a tighter convergence rate for the original SCG.
\citet{woodstock2025splitting} prove that the primal gap of $F_\lambda$ converges at a rate of $\mathcal{O}\left(\log t / \sqrt t\right)$.
By analyzing a different auxiliary objective, we show that replacing the step size $\gamma_t = \frac{2}{\sqrt{t+2}}$ 
by the smaller step size $\gamma_t = \frac{2}{\sqrt{t+2}\log(t+2)}$ improves the convergence rate of SCG to $\mathcal{O}\left(1/\sqrt{t}\right)$.
The proof for the more general case of finite intersections over Hilbert spaces is provided in \refappendix{\cref{section: Proof split conditional gradient}}.
\begin{theorem}\label{theorem: SCG convergence}
Let $f \colon \R^n \to \R$ be convex and $L$-smooth, $P, Q \subset \R^n$ be non-empty compact and convex sets with diameters $D_P$ and $D_Q$ such that $P \cap Q \neq \emptyset$.
For $t \geq 0$, set $\lambda_t = \ln(t+2)$, $(\vx_t^*, \vy_t^*) \in \argmin_{(\vx,\vy) \in P\times Q} F_\lt(\vx,\vy)$.
For $\gamma_t = \frac{2}{\sqrt{t+2}\ln(t+2)}$, the iterates of \cref{alg:SCG} satisfy
\begin{equation}
    \label{eq: SCG convergence}
    0 \leq F_{\lambda_t}(\vx_t, \vy_t) - F_{\lambda_t}(\vx_t^*, \vy_t^*) \leq \frac{(D_P + D_Q)^2(L + 1) + \sqrt{2}c_f}{\sqrt{t+2}}
\end{equation}
for all $t \geq 0$, where $c_f = \max_{(\vx,\vy), (\tilde \vx, \tilde \vy) \in P \times Q}  f\left(\frac{\vx+\vy}{2}\right) - f\left(\frac{\tilde \vx + \tilde \vy}{2}\right) < \infty$.
\end{theorem}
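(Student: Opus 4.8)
The plan is to track the primal gap $h_t \defeq F_\lt(\vx_t, \vy_t) - F_\lt(\vx_t^*, \vy_t^*)$ and to prove $h_t \le C/\sqrt{t+2}$ by induction, where $C$ is the claimed constant. The lower bound $h_t \ge 0$ is immediate since $(\vx_t^*, \vy_t^*)$ minimizes $F_\lt$ over $P \times Q$. The genuine difficulty, relative to a standard Frank-Wolfe analysis, is that both the objective $F_\lt$ and its optimal value drift with $t$ through the increasing penalty $\lt = \ln(t+2)$. The proof must therefore separate the progress made by one Frank-Wolfe step on the \emph{frozen} objective $F_\lt$ from the error incurred when passing to $F_\ltt$.

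First I would record the a priori estimate that controls the penalty term at optimality and is responsible for $c_f$ (rather than a crude diameter) appearing in the constant. Fixing any $\vz \in P \cap Q$ and using $F_\lt(\vx_t^*,\vy_t^*) \le F_\lt(\vz,\vz) = f(\vz)$ gives $\frac{\lt}{2}\norm{\vx_t^* - \vy_t^*}^2 \le f(\vz) - f\!\left(\frac{\vx_t^*+\vy_t^*}{2}\right) \le c_f$, hence $\norm{\vx_t^* - \vy_t^*}^2 \le 2c_f/\lt$. The same argument applied at the iterate yields $\norm{\vx_{t+1}-\vy_{t+1}}^2 \le 2(\tilde h_{t+1} + c_f)/\lt$, where $\tilde h_{t+1} \defeq F_\lt(\vx_{t+1},\vy_{t+1}) - F_\lt(\vx_t^*,\vy_t^*)$ denotes the gap at the new iterate but under the old objective.

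Next I would establish the two per-step inequalities. For the frozen objective, a Frank-Wolfe step over $P \times Q$, together with convexity (the Frank-Wolfe gap bounds $h_t$) and $L$-smoothness of $f$, gives $\tilde h_{t+1} \le (1-\gamma_t) h_t + \frac{\gamma_t^2}{2}(D_P+D_Q)^2\!\left(\frac{L}{4} + \lt\right)$, where the effective smoothness $\frac{L}{4}+\lt$ of $F_\lt$ arises from separately bounding $\norm{d_x+d_y}^2$ and $\norm{d_x-d_y}^2$ by $(D_P+D_Q)^2$ along feasible directions. Passing to the new objective, I use $F_\ltt(\vx_{t+1},\vy_{t+1}) = F_\lt(\vx_{t+1},\vy_{t+1}) + \frac{\ltt - \lt}{2}\norm{\vx_{t+1}-\vy_{t+1}}^2$ together with the monotonicity $F_\ltt(\vx_{t+1}^*,\vy_{t+1}^*) \ge F_\lt(\vx_t^*,\vy_t^*)$, and substitute the penalty bound from the previous step to obtain $h_{t+1} \le \tilde h_{t+1}\!\left(1 + \frac{\ltt - \lt}{\lt}\right) + \frac{(\ltt - \lt)c_f}{\lt}$. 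Finally $\ltt - \lt = \ln\frac{t+3}{t+2} \le \frac{1}{t+2}$ turns both correction factors into terms of order $1/((t+2)\ln(t+2))$.

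Combining these and substituting $\gamma_t = 2/(\sqrt{t+2}\ln(t+2))$ and $\lt = \ln(t+2)$, the inductive hypothesis $h_t \le C/\sqrt{t+2}$ yields a bound of the shape $h_{t+1} \le \frac{C}{\sqrt{t+2}} + \frac{-2C + 2(D_P+D_Q)^2 + c_f}{(t+2)\ln(t+2)} + (\text{lower-order terms})$. The main obstacle is the closing bookkeeping: one must check that the leading correction coefficient $-2C + 2(D_P+D_Q)^2 + c_f$ is negative, which holds for the stated $C$, and that this term of order $1/((t+2)\ln(t+2))$ dominates both the residual terms of order $1/((t+2)\ln^2(t+2))$ and the telescoping gap $\frac{C}{\sqrt{t+2}} - \frac{C}{\sqrt{t+3}}$, which is only of order $1/(t+2)^{3/2}$ and hence asymptotically negligible against $1/((t+2)\ln(t+2))$. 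The extra slack in $C = (D_P+D_Q)^2(L+1) + \sqrt{2}c_f$ is exactly what makes these comparisons hold uniformly in $t$ and settles the base case $t=0$, where $\norm{\vx_0 - \vy_0} \le D_P + D_Q$ follows once more from the existence of a shared point $\vz \in P \cap Q$.
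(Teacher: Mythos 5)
Your overall strategy is the one the paper uses: a one-step recursion consisting of the Frank-Wolfe contraction on the frozen objective $F_\lt$ plus a drift term controlled by $c_f$ and the increment of the penalty, closed by induction. The paper runs the induction on the normalized gap $\wH_t = H_t/\lt$ with target $\frac{K}{\sqrt{t+2}\ln(t+2)}$ and then multiplies back by $\lt$; your recursion for $h_t$ is exactly the paper's recursion for $\wH_t$ multiplied through by $\ltt$, so the two formulations are equivalent. Your derivation of the drift term via the a priori bounds $\frac{\lt}{2}\norm{\vx_t^*-\vy_t^*}^2 \le c_f$ and $\frac{\lt}{2}\norm{\vx_{t+1}-\vy_{t+1}}^2 \le \tilde h_{t+1}+c_f$ is correct and is a legitimate alternative to the paper's algebraic identity $\wF_{\ltt}-\wF_{\lt} = \bigl(\tfrac{1}{\ltt}-\tfrac{1}{\lt}\bigr)f(A\,\cdot)$; it yields the same term $c_f\frac{\ltt-\lt}{\lt}$. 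The base case also checks out.

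The genuine gap is in how you close the induction. You argue that the leading correction coefficient $-2C+2(D_P+D_Q)^2+c_f$ is negative and that the telescoping gap $\frac{C}{\sqrt{t+2}}-\frac{C}{\sqrt{t+3}} = O\bigl((t+2)^{-3/2}\bigr)$ is ``asymptotically negligible'' against $\frac{1}{(t+2)\ln(t+2)}$. But the induction must hold for every $t \ge 0$, and for small $t$ the ratio $\frac{\ln(t+2)}{\sqrt{t+2}}$ is a constant of order $0.5$--$0.74$, not small. Writing $B=(D_P+D_Q)^2$, your margin is $2C-2B-c_f = 2LB+(2\sqrt{2}-1)c_f$, while covering the telescoping gap at its worst $t$ requires this margin to be at least roughly $0.37\,C = 0.37\bigl(B(L+1)+\sqrt{2}c_f\bigr)$. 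This fails whenever $L$ and $c_f/B$ are both small (and the margin is exactly $0$ when $L=c_f=0$, where the remaining positive lower-order noise makes the step impossible), so the induction as you set it up does not close for all admissible instances even though the theorem is true. The paper avoids this by arranging the two noise terms to sum to exactly one unit of the \emph{full} constant $K = 2R^2(L+1)+\sqrt{2}c_f$ at scale $\frac{1}{(t+2)\ln(t+2)^2}$ (the smoothness term contributes $2R^2(L+1)$ and the drift term is deliberately relaxed to $\sqrt{2}c_f$), so the net margin is a guaranteed $-K$, proportional to the induction constant itself; the telescoping comparison is then verified for all $t$ via the explicit inequality $\sqrt{t+2}\ln(t+2)-1 \le \frac{(t+2)\ln(t+2)^2}{\sqrt{t+3}\ln(t+3)}$. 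Your smoothness coefficient $2(D_P+D_Q)^2$ is not dominated by $C$ when $L<1$, which is precisely why no such uniform margin survives; to repair the argument you would need either a tighter smoothness estimate measured in the weighted product norm (replacing $(D_P+D_Q)^2$ by $\tfrac12(D_P^2+D_Q^2)(1+L/\lt)$, as the paper effectively does) or an explicit, non-asymptotic verification of the closing inequality for all $t\ge 0$.
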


\subsection{(Second-Order) Conditional Gradient Sliding}
Another class of algorithms that benefit from quadratic corrections is sliding methods,
specifically Conditional Gradient Sliding (CGS) \citep{lan2016conditional} and Second-Order Conditional Gradient Sliding (SOCGS) \citep{carderera2020second}.
These methods reduce the number of first-order oracle calls by iteratively solving quadratic problems with a FW variant.
While the original papers considered vanilla FW and AFW steps, one can use quadratic corrections to accelerate the inner steps.

Unlike regular FW variants, CGS requires only $\mathcal{O}(1/\sqrt{\varepsilon})$ instead of $\mathcal{O}(1/\varepsilon)$ gradient calls to achieve $\varepsilon$-optimality, matching existing lower bounds.
SOCGS achieves a linear rate for strongly convex functions over polytopes, replacing the Euclidean projection subproblem in CGS with a projected variable-metric problem.
The convergence of CGS and SOCGS was analyzed on globally smooth, strongly convex functions.
We extend this rate to generalized self-concordant functions, a detailed description is given in~\refappendix{\cref{section: appendix socgs}}.

\begin{theorem}
    \label{th: global convergence pvm self-concordant functions short}
    Let $\mathcal{X}$ be a compact convex set of diameter $D$ and $f$ be a $(M,\nu)$-generalized self-concordant function with $\nu \geq 2$ such that
    $f$ is strongly convex on $\mathrm{dom}(f) \cap \Xset{}$ if $\nu = 3$.
    Given a starting point $\vx_0 \in \Xset{} \cap \mathrm{dom}(f)$,
    the \cref{alg: pvm} with a step size $\gamma_k$ guarantees for all $k \geq 0$: 
    \begin{align*}
        f(\vx_{k+1}) - f(\vx^*) \leq \left( 1 - c(\gamma_k)\right) \left(f(\vx_{k}) - f(\vx^*) \right),
    \end{align*}
    where $c(\gamma_k) < 1$ is a constant depending on the step size $\gamma_k$.
\end{theorem}

\section{Experiments}
\label{section: experiments}

In this section, we present numerical experiments on two classical quadratic problems and on two applications of quadratic corrections to SCG and SOCGS.
In all experiments, we use a hybrid of BPCG and QC steps, performing a QC step whenever $N$ new atoms have been added to the active set.
In the first two experiments, we compare QC-LP and QC-MNP with four baselines: FW, AFW, PFW, and BPCG.
For SCG and SOCGS, we compare against the original used FW variant and BPCG, to demonstrate the effectiveness of the quadratic corrections.
Additional details and further experiments are provided in \refappendix{\cref{section: additional_experiments}}.
All runs were executed on a cluster with Intel Xeon Gold 6338 CPUs at 2 GHz and 12 GB RAM, with time and iteration limits chosen according to problem size.

\subsection{Regression over the $K$-Sparse polytope}
In the first experiment, we consider a sparse regression problem over the $K$-Sparse polytope, $P_K(\tau) = B_1(\tau K) \cap B_\infty(\tau)$.
The problem is to minimize $f(\vx) = \sum_{i=1}^m (\innp{\vx, \va_i} - y_i)^2 = \|\mathbf{A}\vx - \vy\|_2^2$ over $P_K(\tau)$
given data points $\{(\va_i, y_i)\}_{i=1}^m \subset \R^n \times \R$.
We generated synthetic normally distributed $\va_i$ and $y_i$ with $n = 500$, $m = 10000$ and $K \in \{5,20\}$, and used a fixed interval length of $N = 10$ for the quadratic correction steps.
As shown in \cref{fig:ksparse_regression}, all methods except FW converge linearly, with QC-LP and QC-MNP providing a substantial acceleration for both instances.
For smaller values of $K$, the benefit of QC is more pronounced as the optimal face contains more atoms and therefore the active set is larger.
Both QC methods reach optimality significantly faster than the baselines.
\begin{figure}[H]
    \begin{subfigure}{0.47\textwidth} 
        \includegraphics[width=0.99\textwidth]{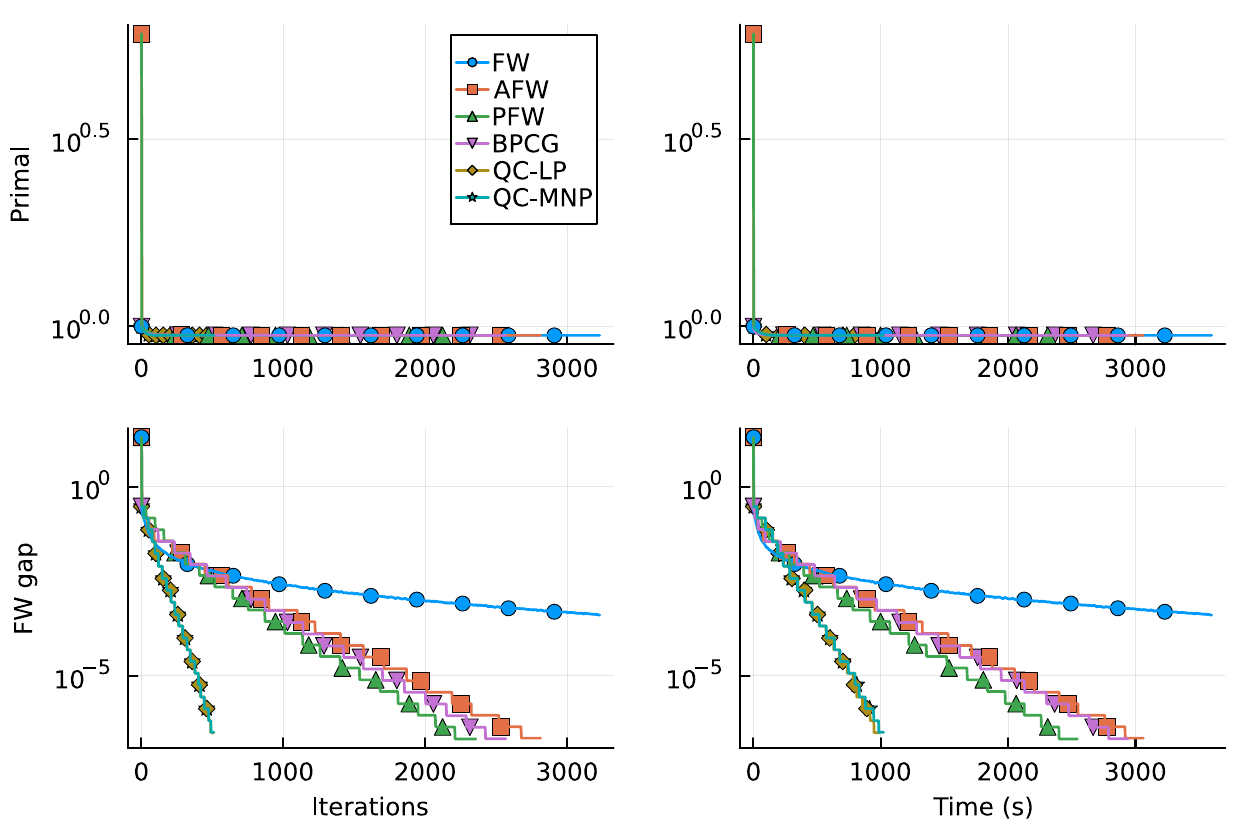}
    \end{subfigure}
    \hfill
    \begin{subfigure}{0.47\textwidth}
        \includegraphics[width=0.99\textwidth]{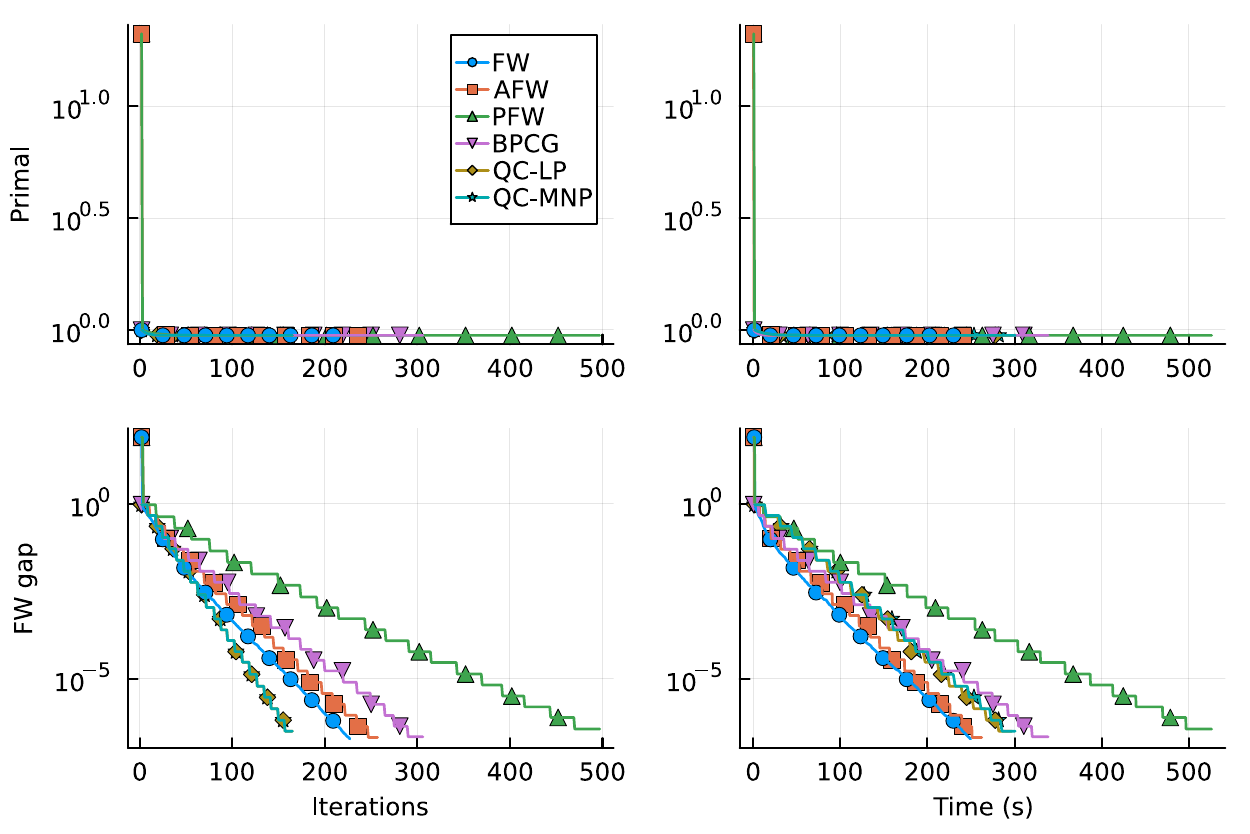}
    \end{subfigure}
    \caption{Sparse regression over the $K$-Sparse polytope for $K \in \{5,20\}$}
    \label{fig:ksparse_regression}
\end{figure}

\subsection{Entanglement detection}
Certifying the entanglement of quantum states is a fundamental challenge in quantum information theory.
\citet{shang2018multipartite} use Gilbert's algorithm \citep{gilbert1966min}, a simplified FW method, together with a heuristic LMO for computing the minimal distance of a given state to the set of separable quantum states.
\citet{liu2025toolbox} extend their approach to general FW methods and propose an approximate LMO with a provable multiplicative error.
For our experiments, we consider a family of $3 \times 3$ entangled states proposed in \citet{Hor97}
and use a fixed interval length of $N = 1$ for QC-MNP and $N = 10$ for QC-LP.
A complete description of the setup as well as results for noisy states are given in \refappendix{\cref{section: additional_experiments}}.
The results for $a \in \{0.25, 0.5\}$ are shown in \cref{fig:entanglement_detection}.
We use a logarithmic scale on the horizontal axis to visualize the differences between the methods more clearly.
QC-MNP shows a drastic initial acceleration, solving the problem to optimality in a fraction of the iterations and time of the other baselines.
The LP in the QC-LP is rarely feasible, leading to pairwise steps and thus a similar trajectory to BPCG.
FW and PFW perform worse than the other baselines in terms of time due to the high number of LMO calls.
\begin{figure}[H]
    \begin{subfigure}{0.47\textwidth}
        \includegraphics[width=0.99\textwidth]{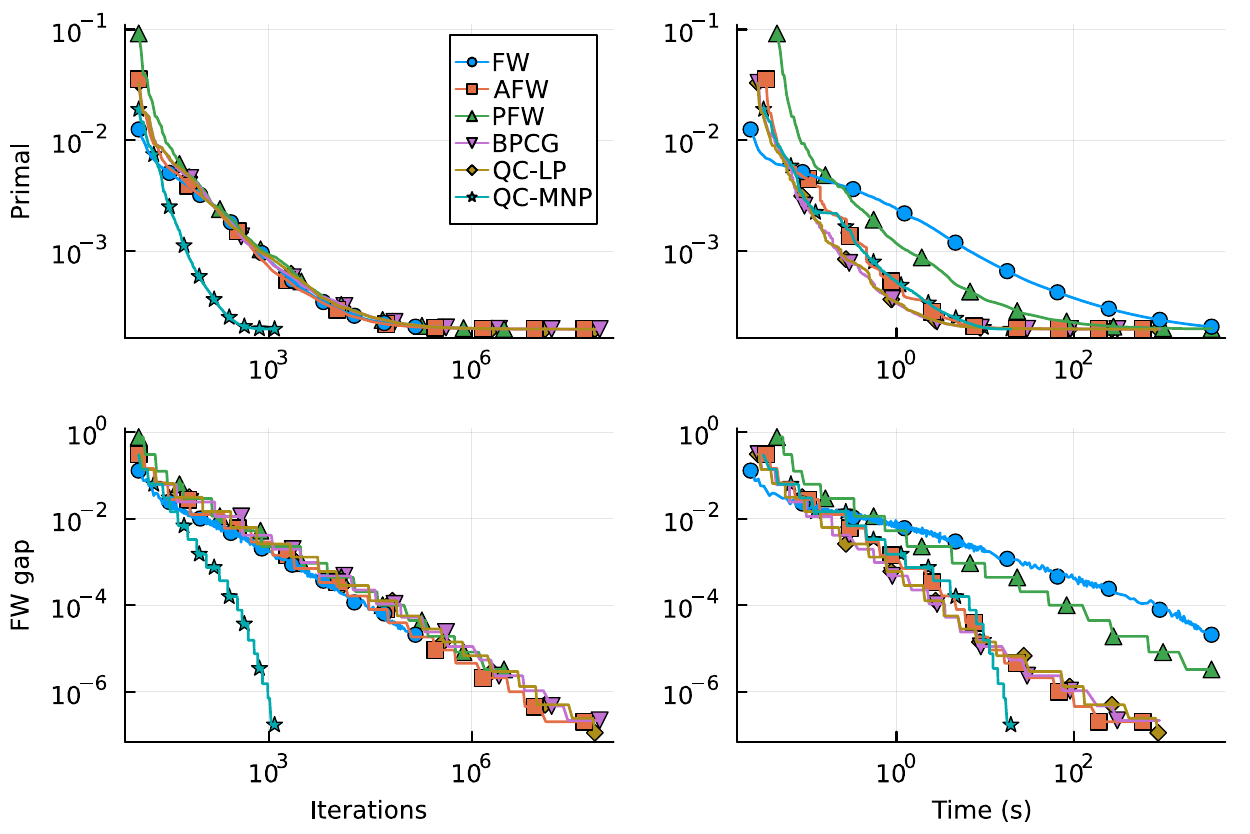}
    \end{subfigure}
    \hfill
    \begin{subfigure}{0.47\textwidth}
        \includegraphics[width=0.99\textwidth]{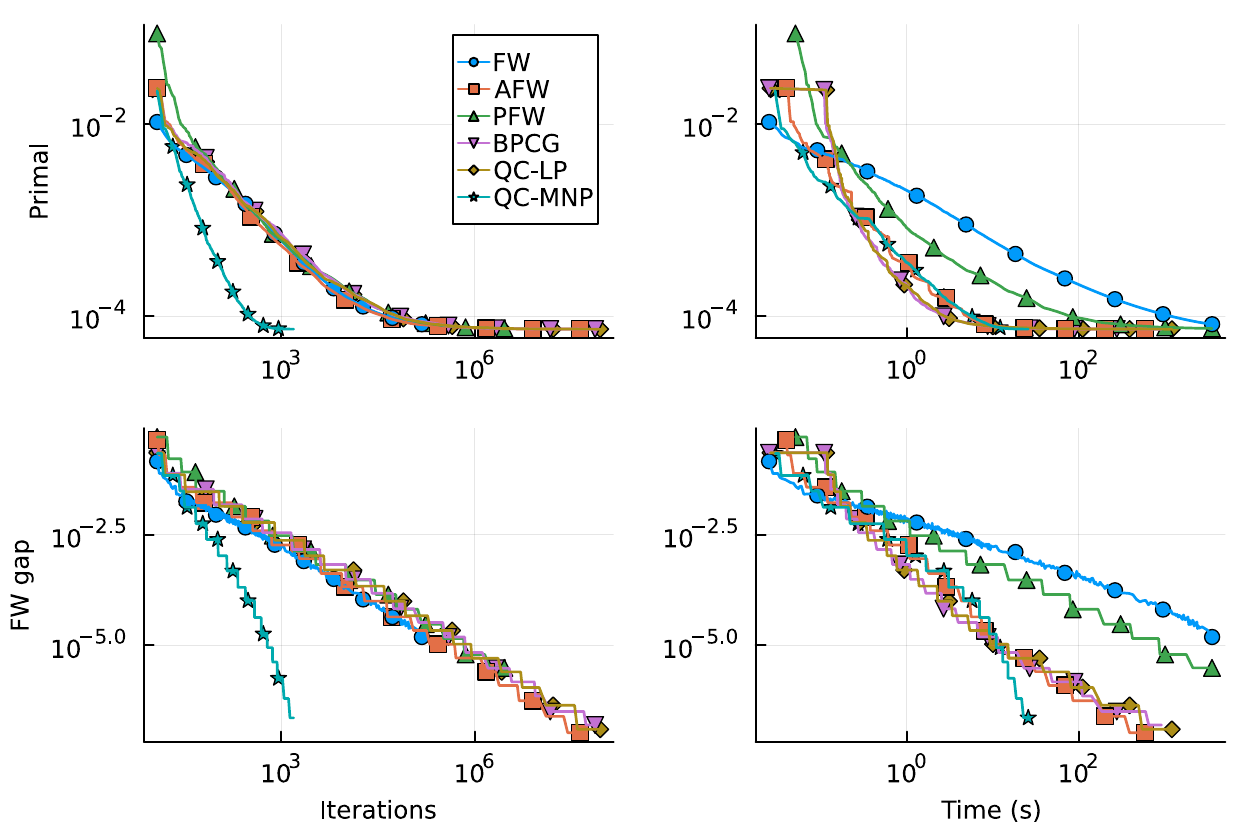}
    \end{subfigure}
    \caption{Entanglement detection for $a \in \{0.25, 0.5\}$}
    \label{fig:entanglement_detection}
\end{figure}

\subsection{Projection onto intersections with Split Conditional Gradient}

In this experiment, we use SCG to project onto the intersection of the Birkhoff polytope $B(n)$,
the set of all $n \times n$ doubly stochastic matrices, and a shifted $\ell_2$ ball.
For the shift, we sample a point on a face of $B(n)$ and move the center of the ball from there in the direction of the normal vector of the face.
The shift is parametrized by $c \in [0,1]$ and $q \in [0,1]$; a complete description is given in the appendix.
The objective is given by $f(\mX) = \frac{1}{n^2}\|\mX - \mX_0\|_F^2$ where $\mX_0 \in \R^{n \times n}$ is sampled with uniformly distributed entries.
The results for $n \in \{300, 500\}$ using $c = 0.9$, $q = 0.1$ and $N=1$ are given in \cref{fig:L2_birkhoff_projection}.
We compare the hybrid methods QC-MNP and QC-LP with BPCG and the FW variant from \citep{woodstock2025splitting}.
The split approach, with its dynamically changing objective, makes this problem more difficult than the previous ones.
Both QC methods hit numerical precision limits on these instances and would need adjusted tolerances for longer runs.
Nevertheless, QC-MNP and QC-LP outperform BPCG for $n=500$ and achieve similar results for $n=300$.
The FW variant from \citep{woodstock2025splitting} performs the worst on both instances.

\begin{figure}[H]
    \begin{subfigure}{0.47\textwidth}
        \includegraphics[width=0.99\textwidth]{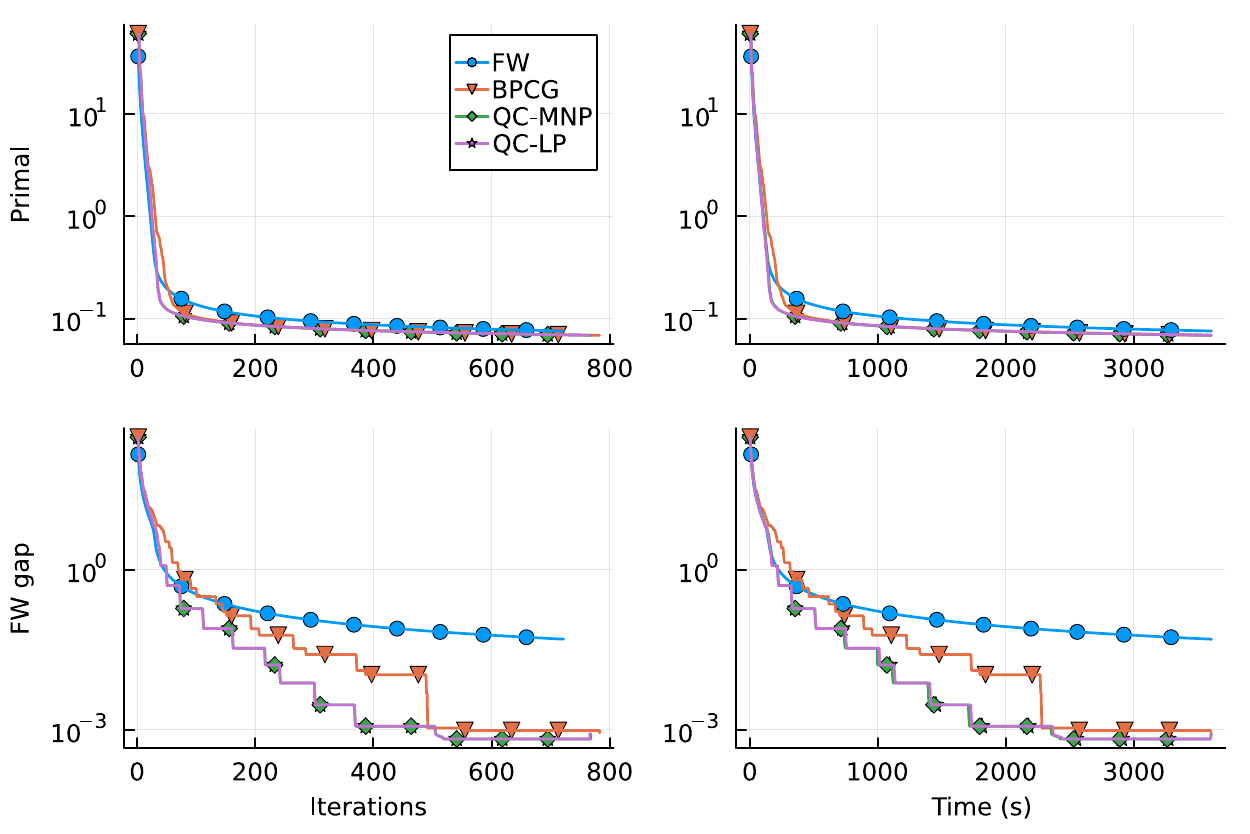}
    \end{subfigure}
    \hfill
    \begin{subfigure}{0.47\textwidth}
        \includegraphics[width=0.99\textwidth]{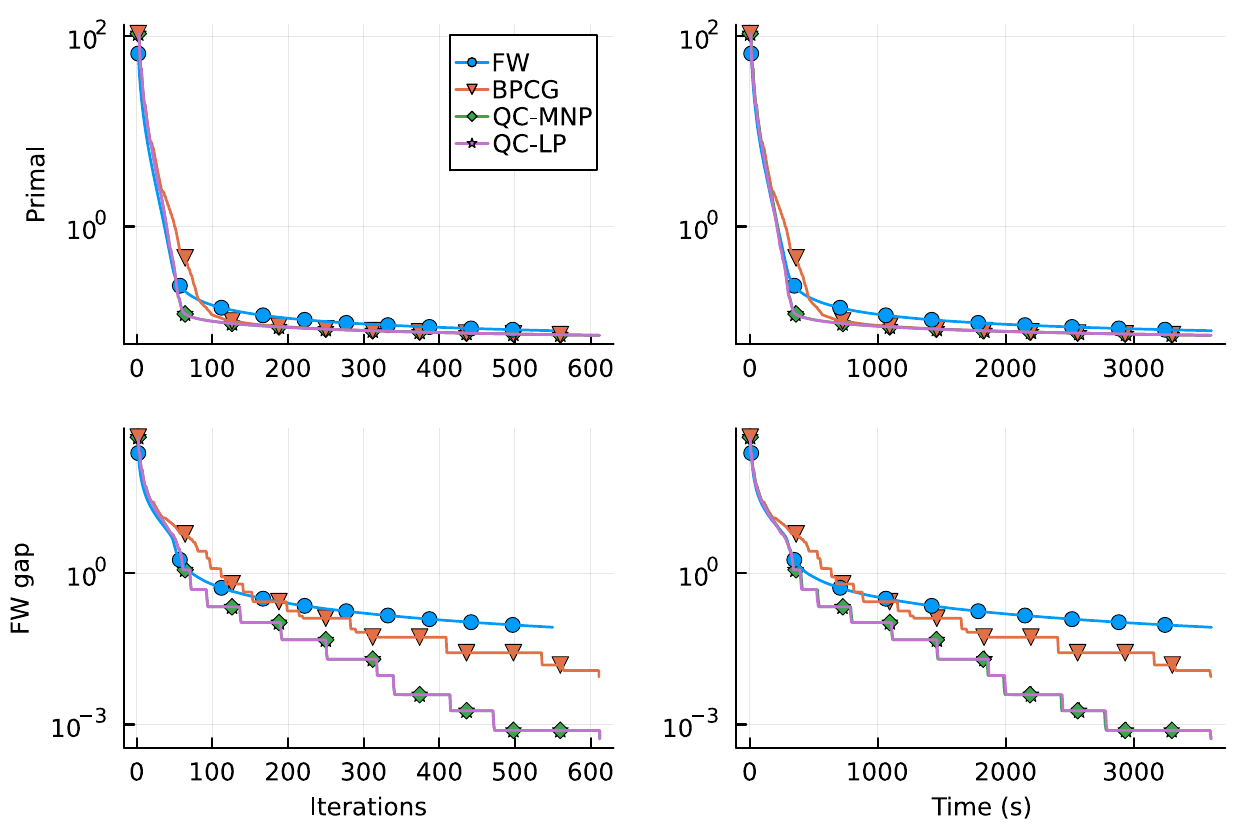}
    \end{subfigure}
    \caption{Projection onto the intersection of the Birkhoff polytope and a shifted $\ell_2$ ball for dimension $n \in \{300, 500\}$}
    \label{fig:L2_birkhoff_projection}
\end{figure}

\subsection{Projecting with quadratic corrections in Second-Order Conditional Gradient Sliding}
\label{subsection: gisette experiment}

In the last experiment, we solve a logistic regression problem over the \(\ell_1\) ball with SOCGS.
The objective is to minimize \(f(\vx) =
    \frac{1}{m} \sum_{i=1}^m \ln\big(1 + \exp(-y_i \innp{\vx,\vz_i})\big)
    + \frac{1}{2m} \norm{\vx}^2\).
The labels~\(y_i \in \{-1,1\}\) and feature vectors ~\(\vz_i \in \R^n\) are taken from the \verb|gisette| training dataset~\citep{guyon2007competitive},
with $n = 5000$ and $m= 6000$.
Recall that each iteration of SOCGS corresponds to solving an outer problem and an inner problem.
We use the BPCG step for the outer problem and compare BPCG, QC-MNP, and QC-LP for the inner step. 
Additionally, we also test AFW for the inner step as proposed in \citep{carderera2020second}.
The results for $k \in \{50,200\}$ inner iterations are depicted in \cref{fig:gisette_socgs}.
All methods follow an identical trajectory in the initial phase, indicating that SOCGS chooses only the outer step.
Once the distance to the optimum is sufficiently small, the QC methods exhibit substantial improvements over BPCG in both instances.
Further analysis reveals that QC-MNP and QC-LP perform more FW steps than BPCG on the inner problem by avoiding additional pairwise steps, 
which yields a drastic initial decrease in primal values and the FW gap.
For longer runs of the inner problem, i.e., larger $k$, the acceleration of the QC methods is less pronounced, yielding a similar performance as AFW.
The hybrid method with QC-MNP performs more quadratic corrections than QC-LP, yielding more significant acceleration but also longer runs in wall-clock time.

\begin{figure}[H]
    \begin{subfigure}{0.47\textwidth}
        \includegraphics[width=0.99\textwidth]{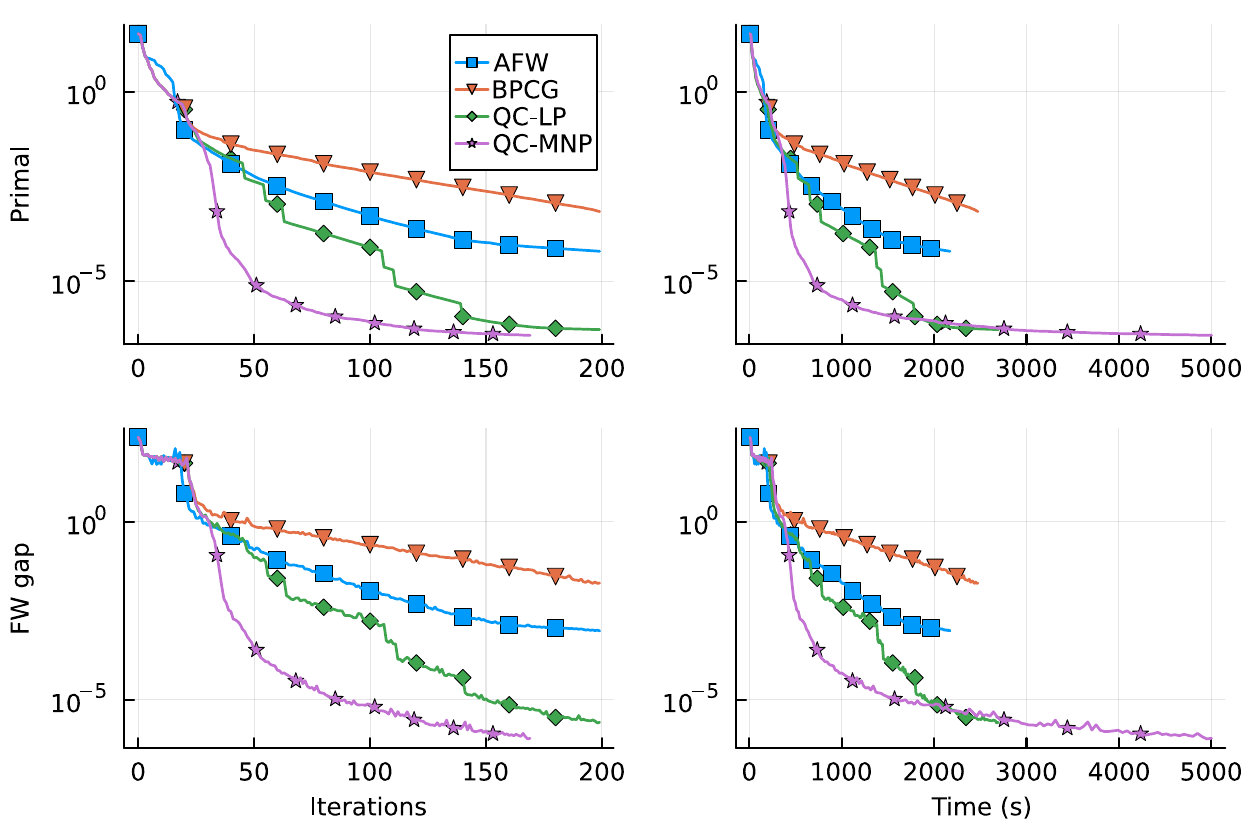}
    \end{subfigure}
    \hfill
    \begin{subfigure}{0.47\textwidth}
        \includegraphics[width=0.99\textwidth]{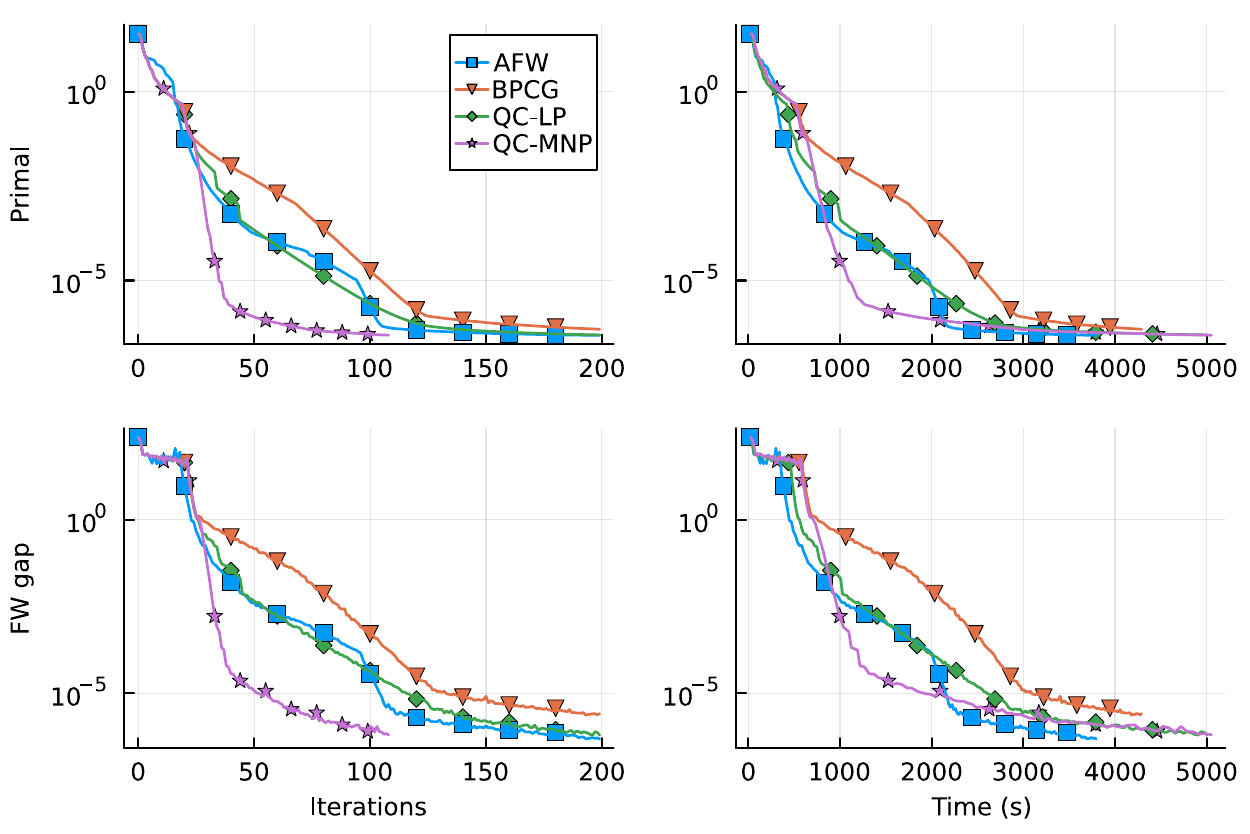}
    \end{subfigure}
    \caption{Logistic regression over the \(\ell_1\)-ball for maximum number of inner steps \(k \in \{50,200\}\)}
    \label{fig:gisette_socgs}
\end{figure}

\section{Conclusion}
In this paper, we present a new framework for conditional gradient methods, Corrective Frank-Wolfe, that uses corrective steps on the active set to improve convergence while avoiding additional LMO calls.
Instances of the proposed framework provide state-of-the-art convergence rates for sharp functions over polytopes and identify the optimal face of a polytope in finite time.
The proposed quadratic corrections are very effective in practice, outperforming methods of comparable convergence both in iteration count and runtime.
This allows us to revisit and accelerate previous algorithms, Split Conditional Gradient and Second-Order Conditional Gradient Sliding, by applying quadratic corrections to the arising subproblems.

\begin{ack}
Research reported in this paper was partially supported by the Deutsche Forschungsgemeinschaft (DFG, German Research Foundation) under Germany's Excellence Strategy - The Berlin Mathematics Research Center MATH+ (EXC-2046/1, project ID: 390685689).
The work of Mathieu Besançon was supported by MIAI at Université Grenoble Alpes (grant ANR-19-P3IA-0003).

\end{ack}

\bibliographystyle{plainnat}
\bibliography{refs}
\clearpage

\ifshowappendix
\appendix

\section{Proofs for Corrective Frank-Wolfe}
\label{section: proofs for cfw}
In this section, we provide proofs for the theoretical results on CFW stated in \cref{section: corrective frank wolfe}.
Before we present a more general version of \cref{theorem: CFW linear convergence} and its proof, we recall two lemmas resulting from the sharpness of the objective function and a conversion from contraction to convergence rates.

\begin{lemma}{\cite[Lemma 3.32]{cg_survey}}
    \label{lemma: geomsharpness}
    Let $\Xset{}$ be a polytope of pyramidal width $\delta > 0$ and let $f$ be a $(c,\theta)$-sharp convex function over $\Xset{}$.
    Let $\va$ and $\vvv$ be defined as in \cref{alg:correctivefw},
    then
    \begin{align}\label{eq:geomsharpness}
        \frac1c (f(\vx) - f^*)^{1-\theta} \leq \frac{\innp{\nabla f(\vx), \va - \vvv}}{\delta}.
    \end{align}
\end{lemma}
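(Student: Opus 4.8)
The plan is to chain together three ingredients: convexity of $f$ (to bound the primal gap from above by a gradient inner product), the $(c,\theta)$-sharpness assumption (to bound the distance to the optimal set from above by a power of the primal gap), and the geometric guarantee attached to the pyramidal width $\delta$ (to bound the away--Frank-Wolfe pairwise gap $\innp{\nabla f(\vx), \va - \vvv}$ from below in terms of how well $-\nabla f(\vx)$ is aligned with the direction toward a minimizer).

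First I would fix a closest minimizer $\vx^* \in \argmin_{\vy \in \Xset^*} \norm{\vx - \vy}$, so that $\norm{\vx - \vx^*} = \dist(\vx, \Xset^*)$. If $\vx \in \Xset^*$ the claim is immediate, since for $\theta < 1$ (in particular the case $\theta = \tfrac12$ used later) the left-hand side vanishes while the right-hand side is nonnegative, as $\innp{\nabla f(\vx), \va} = \max_{\vvv \in S}\innp{\nabla f(\vx),\vvv} \ge \min_{\vvv \in V(\Xset)}\innp{\nabla f(\vx),\vvv} = \innp{\nabla f(\vx), \vvv}$. So assume $\vx \notin \Xset^*$. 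Convexity of $f$ then gives $f(\vx) - f^* \le \innp{\nabla f(\vx), \vx - \vx^*}$, which lower-bounds the numerator appearing below, and sharpness gives the upper bound $\norm{\vx - \vx^*} \le c\,(f(\vx) - f^*)^\theta$ on the denominator.

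The central step is the geometric inequality associated with the pyramidal width. Taking $\vr = -\nabla f(\vx)$, the Frank-Wolfe vertex $\vvv$ maximizes $\innp{\vr, \cdot}$ over $V(\Xset)$ and the away vertex $\va$ minimizes $\innp{\vr, \cdot}$ over the active set, so the pyramidal-width guarantee applied in the direction $\vx^* - \vx$ yields
\begin{equation*}
    \innp{\nabla f(\vx), \va - \vvv} \ge \delta\, \frac{\innp{\nabla f(\vx), \vx - \vx^*}}{\norm{\vx - \vx^*}}.
\end{equation*}
Substituting the convexity bound into the numerator and the sharpness bound into the denominator gives
\begin{equation*}
    \innp{\nabla f(\vx), \va - \vvv} \ge \delta\, \frac{f(\vx) - f^*}{c\,(f(\vx) - f^*)^\theta} = \frac{\delta}{c}\,(f(\vx) - f^*)^{1-\theta},
\end{equation*}
and dividing through by $\delta$ produces exactly the claimed bound.

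The main obstacle is the pyramidal-width inequality itself, which is where all the geometry is concentrated: it relies on the machinery of Lacoste-Julien and Jaggi, namely valid active-set representations, the pyramidal directional width over faces, and the fact that the pyramidal width $\delta$ lower-bounds every pyramidal directional width. Since the statement here is precisely the cited lemma, I would invoke that geometric guarantee as a black box; the only care needed on top of it is to confirm that the away vertex $\va$ from \cref{alg:correctivefw} arises from a valid active-set representation of $\vx$ and to keep the sign conventions consistent (Frank-Wolfe vertex $=$ gradient minimizer, away vertex $=$ gradient maximizer) so that $\innp{\nabla f(\vx), \va - \vvv}$ matches $\innp{\vr, \vvv - \va}$.
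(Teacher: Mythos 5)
The paper does not prove this lemma itself — it imports it verbatim as Lemma 3.32 of the cited survey — and your reconstruction follows exactly the standard argument behind that result: convexity for the numerator, sharpness for the denominator, and the pyramidal-width guarantee in the direction $\vx^* - \vx$ with $\vr = -\nabla f(\vx)$, with the sign conventions matching \cref{alg:correctivefw} correctly. Your handling of the degenerate case $\vx \in \Xset^*$ and your identification of the pyramidal-width inequality as the one genuinely geometric black box are both appropriate, so the proposal is correct and consistent with the intended proof.
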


\begin{lemma}{\cite[Lemma 2.21]{cg_survey}}
    \label{lemma: contraction to convergence rates}
    Let $\{h_t\}_t$ be a sequence of positive numbers and let $c_0, c_1, c_2, \alpha$ be positive numbers with $c_1 < 1$
    such that $h_1 \leq c_0$ and $h_t - h_{t+1} \geq h_t \min \{c_1, c_2 h_t^\alpha\}$ for all $t \geq 1$, then
    $$
        h_t \leq \begin{cases}
            c_0 (1-c_1)^{t-1} & \text{if } 1 \leq t \leq t_0 \\
            \frac{(c_1 / c_2)^{1/\alpha}}{(1+c_1 \alpha (t-t_0))^{1/\alpha}}  = \mathcal{O}(1 / t^{1/\alpha}) & \text{if } t \geq t_0,
        \end{cases}
    $$
    where
    $$
        t_0 \defeq \max \left\{\left\lfloor \log_{1-c_1} \left( \frac{(c_1/c_2)^{1 / \alpha}}{c_0} \right) \right\rfloor + 2, 1 \right\}.
    $$
\end{lemma}

\thmCfwConv*

\begin{proof}
    Let $T$ be the number of iterations of the algorithm, $T_{\mathrm{FW}}$ the number of Frank-Wolfe steps, $T_{\mathrm{desc}}$ the number of descent steps, and $T_{\mathrm{drop}}$ the number of drop steps.
    Frank-Wolfe steps are the only steps that add vertices to the active set, and they do so one at a time. 
    As a drop step reduces the active set by at least one vertex, we have $T_{\mathrm{drop}} \leq T_{\mathrm{FW}}$ and thus,
    \begin{equation}\label{eq: upper bound on number of iterations}
    T = T_{\mathrm{FW}} + T_{\mathrm{drop}} + T_{\mathrm{desc}} \leq 2T_{\mathrm{FW}} + T_{\mathrm{desc}} \leq 2(T_{\mathrm{FW}} + T_{\mathrm{desc}}).
    \end{equation}
    Next, we bound the primal gap $h_t \defeq f(\vx_t) - f^*$ at iteration $t$.

    First, we consider the case when a Frank-Wolfe step is taken.
    Using $L$-smoothness of $f$, the definition of the FW step $\vx_{t+1} = \vx_t + \gamma_t (\vvv_t - \vx_t)$, and the diameter $D$ of the feasible region yields
    \begin{align}
        h_t - h_{t+1} &= f(\vx_t) - f(\vx_{t+1}) \notag\\
        &\geq  \gamma_t \innp{\nabla f(\vx_t), \vx_t - \vvv_t} - \frac{\gamma_t^2 L}{2} \norm{\vx_t - \vvv_t}^2 \notag\\
        &\geq  \gamma_t \innp{\nabla f(\vx_t), \vx_t - \vvv_t} - \frac{\gamma_t^2 L D^2}{2} \label{eq: primal progress by smoothness 1}.
    \end{align}
    As $\vvv_t \in \argmax_{\vvv \in V(\Xset)} \innp{\nabla f(\vx_t), \vx_t - \vvv}$ by definition, and by convexity of $f$, we get
    \begin{equation}
        \label{eq: primal progress by smoothness 2}
        h_t - h_{t+1} \geq  \gamma_t \innp{\nabla f(\vx_t), \vx_t - \vx^*_t} - \frac{\gamma_t^2 L D^2}{2} \geq \gamma_t h_t - \frac{\gamma_t^2 L D^2}{2}.
    \end{equation}
    Let $\hat\gamma_t = \frac{h_t}{L D^2}$, which yields a lower bound on the progress made by the exact line search if $\hat\gamma_t \leq 1$.
    For \eqref{eq: primal progress by smoothness 2} we have then
    \begin{equation}
        \label{eq: FW step bound 1}
        h_t - h_{t+1} \geq \hat \gamma_t h_t - \frac{\hat\gamma_t^2 L D^2}{2}  = \frac{h_t^2}{2LD^2}.
    \end{equation}
    If $\hat\gamma_t > 1$, we have by definition $h_t \geq L D^2$.
    We can bound the progress of the line search using $\gamma_t=1$ in \eqref{eq: primal progress by smoothness 2}, which yields
    \begin{equation}
        \label{eq: FW step bound 2}
        h_t - h_{t+1} \geq h_t - \frac{LD^2}{2} \geq \frac{1}{2} h_t.
    \end{equation}
    Combining \eqref{eq: FW step bound 1}, \eqref{eq: FW step bound 2}, we have
    \begin{equation}
        \label{eq: FW step bound combined}
        h_{t+1} \leq \begin{cases}
            h_t -  \frac{h_t^2}{2LD^2} & \text{if } \hat\gamma_t \leq 1 \\
            \frac{1}{2} h_t & \text{if } \hat\gamma_t > 1
        \end{cases}.
    \end{equation}
    Next, we consider the case when a descent step is taken.
    Using the criterion for performing a corrective step, the optimality of $\vvv_t$ and the convexity of $f$, we have
    \begin{align}
        h_t - h_{t+1} = f(\vx_t) - f(\vx_{t+1})
        &\geq \frac{\innp{\nabla f(\vx_t), \va_t - \vs_t}^2}{2LD^2} \notag\\
        &\geq \frac{\innp{\nabla f(\vx_t), \vx_t - \vvv_t}^2}{2LD^2} \notag \\
        &\geq \frac{\innp{\nabla f(\vx_t), \vx_t - \vx^*}^2}{2LD^2} \notag \\
        &\geq \frac{h_t^2}{2LD^2}. \label{eq: descent step bound convex}
    \end{align}
    Together with \eqref{eq: FW step bound combined}, we get
    \begin{equation*}
        h_{t+1} \leq \frac{2LD^2}{T_{\mathrm{FW}}+T_{\mathrm{desc}}},
    \end{equation*}
    which can be shown in the same way as in the proof of Corollary 4.2 in \citet{pok18bcg}, with $2LD^2$ replacing the value $4L_f$.
    Together with \eqref{eq: upper bound on number of iterations}, we get
    \begin{equation*}
        h_T \leq \frac{4LD^2}{T},
    \end{equation*}
    which proves~\eqref{eq: primal gap}.

    Now we prove the accelerated convergence rate~\eqref{eq: linear convergence cfw} \ifarxiv and \eqref{eq: linear convergence cfw general} \fi with the additional assumptions that $f$ is $(c,\theta)$-sharp and $\Xset$ is a polytope.
    Again, we will bound the primal gap $h_t$ for different steps of the algorithm.

    First, we consider the iterations when the Frank-Wolfe step is taken. By line 6 of \cref{alg:correctivefw} we have
    $$
        \innp{\nabla f(\vx_t), \vx_t - \vvv_t} \geq \innp{\nabla f(\vx_t), \va_t - \vs_t} \geq \innp{\nabla f(\vx_t), \va_t - \vx_t}.
    $$
    Adding the left-hand side to both sides yields:
    \begin{equation}
        \label{eq: FW gap bound}
        2\innp{\nabla f(\vx_t), \vx_t - \vvv_t} \geq \innp{\nabla f(\vx_t), \va_t - \vvv_t}.
    \end{equation}

    Similar to the proof of the sublinear rate, we consider a primal progress bound induced by smoothness, specifically
    \eqref{eq: primal progress by smoothness 1}, and choose a specific step size, $\tilde \gamma_t = \frac{\innp{\nabla f(\vx_t), \vx_t - \vvv_t}}{L D^2}$.
    For the case $\tilde \gamma_t < 1$, we have
    \begin{equation}
        \label{eq: shortstep as lower bound}
        h_t - h_{t+1} \geq \frac{\innp{\nabla f(\vx_t), \vx_t - \vvv_t}^2}{2L D^2}.
    \end{equation}
    Together with \eqref{eq: FW gap bound} and \cref{lemma: geomsharpness} we get
    \begin{align}
        \label{eq: FW step bound sharp 1}
        h_t - h_{t+1}
        &\geq \frac{\innp{\nabla f(\vx_t), \vx_t - \vvv_t}^2}{2L D^2} \notag\\
        &\geq \frac{\innp{\nabla f(\vx_t), \va_t - \vvv_t}^2}{8L D^2} \notag\\
        &\geq \frac{\delta^2}{8c^2LD^2} h_t^{2(1 - \theta)}.
    \end{align}
    In the case $\tilde \gamma_t \geq 1$ we have $\innp{\nabla f(\vx_t), \vx_t - \vvv_t} \geq L D^2$.
    Again, we can bound the progress of the exact line search using $\gamma_t=1$ in \eqref{eq: primal progress by smoothness 1}, which yields
    \begin{align}
        h_t - h_{t+1}
        &\geq \innp{\nabla f(\vx_t), \vx_t - \vvv_t} - \frac{LD^2}{2} \notag\\
        &\geq \frac{\innp{\nabla f(\vx_t), \vx_t - \vvv_t}}{2} \label{eq: FW step bound 2a}\\
        &\geq \frac{\innp{\nabla f(\vx_t), \vx_t - \vx_t^*}}{2} \geq \frac{1}{2} h_t. \label{eq: FW step bound sharp 2}
    \end{align}
    
    For the descent step, we have
    $$
        \innp{\nabla f(\vx_t), \va_t - \vs_t} \geq \innp{\nabla f(\vx_t), \vx_t - \vvv_t}
        \geq \innp{\nabla f(\vx_t), \vs_t - \vvv_t}.
    $$
    Adding $\innp{\nabla f(\vx_t), \va_t - \vs_t}$ to both sides yields
    $$
        2\innp{\nabla f(\vx_t), \va_t - \vs_t} \geq \innp{\nabla f(\vx_t), \va_t - \vvv_t}.
    $$
    
    Together with the definition of a descent step, line 6 of \cref{alg:correctivefw} and again \cref{lemma: geomsharpness}, we get
    \begin{align}
        h_t - h_{t+1}
        &\geq \frac{\innp{\nabla f(\vx_t), \va_t - \vs_t}^2}{2LD^2} \notag\\
        &\geq \frac{\innp{\nabla f(\vx_t), \va_t - \vvv_t}^2}{8LD^2} \notag\\
        &\geq \frac{\delta^2}{8c^2L D^2} h_t^{2(1 - \theta)}.
    \end{align}
    Together with \eqref{eq: FW step bound sharp 1} and \eqref{eq: FW step bound sharp 2} we have
    \begin{equation}
        \label{eq: progress bound}
        h_t - h_{t+1} \geq h_t \min \left\{ \frac{1}{2}, \frac{\delta^2}{8c^2 LD^2}h_t^{1 - 2\theta} \right\},
    \end{equation}
    for any iteration $t$ that is not a drop step.
    For $\theta = \frac12$, this simplifies to
    $$
        h_{t+1} \leq \min \left\{ \frac12, 1-\frac{\delta^2}{8c^2 LD^2} \right\} h_t.
    $$
    By \eqref{eq: upper bound on number of iterations} and the fact that drop steps are non-increasing, i.e., $h_{t+1} \leq h_t$, we have
    \begin{align*}
        h_T &\leq h_0 \min \left\{ \frac12, 1-\frac{\delta^2}{8c^2 LD^2} \right\}^{T_\text{FW} + T_\text{desc}} \\
        & \leq h_0 (1-2c_{f,\Xset})^{\frac{T}{2}} \\
        & \leq h_0 \exp \left( - c_{f,\Xset} T \right),
    \end{align*}
    where $c_{f,\Xset} = \min \left\{ \frac14, \frac{\delta^2}{16c^2 LD^2} \right\}$.

    \ifarxiv
        For $\theta < \frac12$, we use again that  drop steps are non-increasing,
        and the result of \cref{lemma: contraction to convergence rates} with $c_0 = h_0$, $c_1 = \frac{1}{2}$, $c_2 = \frac{\delta^2}{8c^2 LD^2}$ and $\alpha = 1 - 2\theta$
        yielding
        \begin{equation}
            h_T \leq \begin{cases}
                \frac{h_0}{2^{T_\text{FW} + T_\text{desc} - 1}} & \text{if } 1 \leq T_\text{FW} + T_\text{desc} \leq t_0, \\
                \left( \frac{4c^2 LD^2}{1+\frac{1-2\theta}{2}(T_\text{FW} + T_\text{desc} - t_0)}\right)^{\frac{1}{1-2\theta}} & \text{if } T_\text{FW} + T_\text{desc} \geq t_0,
            \end{cases}
        \end{equation}
        where
        $$
            t_0 \defeq \max \left\{\left\lfloor \log_{\frac12} \left( \frac{(\frac{4c^2LD^2}{\delta})^{1 / (1-2\theta)}}{h_0} \right) \right\rfloor + 2, 1 \right\}.
        $$
        In particular, the above bound yields
        $$
            h_T \leq \left( \frac{4c^2 LD^2}{1+\frac{1-2\theta}{2}(T_\text{FW} + T_\text{desc} - t_0)}\right)^{\frac{1}{1-2\theta}}
            = \mathcal{O} \left( \frac{1}{T^{\frac{1}{1-2\theta}}} \right).
        $$
    \fi
\end{proof}

Next, we prove that different types of steps arising in typical FW variants satisfy the requirement from the CFW framework.
\propCorrectiveStep*
\begin{proof}
    First, we consider the local pairwise step given in \cref{alg:local_pairwise_step}.
    Let $\hat \vx = \vx - \hat \gamma (\va - \vs)$ be the point produced by a short step,
     i.e., $\hat \gamma = \frac{\langle \nabla f(\vx), \va - \vs \rangle}{L \norm{\va - \vs}^2}$.
    If $\hat \gamma \leq \vlambda_\va(\vx)$, we have by smoothness of $f$ that
    $$
        f(\vx) - f(\hat \vx) \geq \hat \gamma \langle \nabla f(\vx), \va - \vs \rangle - \frac{L\hat \gamma^2}{2} \norm{\va - \vs}^2
        = \frac{\langle \nabla f(\vx), \va - \vs \rangle^2}{2 L \norm{\va - \vs}^2}
        \geq \frac{\langle \nabla f(\vx), \va - \vs \rangle^2}{2 L D^2}.
    $$
    Hence, $\hat \vx$ satisfies the criteria of the descent step, and thus so does $\vx'$ produced by the optimal step size $\gamma^*$.

    If $\hat \gamma > \vlambda_\va(\vx)$, convexity of $f$ yields that either $f(\vx') \leq f(\hat \vx)$ or $\gamma^* = \vlambda_\va(\vx)$.
    In the first case, we have a valid descent step, and in the second case, we have a drop step.
    
    Second, we consider the fully-corrective step in \cref{alg:fcfw}. We compare the update to $\hat \vx$.
    If $\hat \gamma \leq \vlambda_\va(\vx)$ and therefore $\hat \vx \in \conv(S)$, the descent criterion is satisfied.
    We now analyze the case $\hat \vx \not \in \conv(S)$. If $f(\vx') \leq f(\hat \vx)$ then we have a valid descent step.
    However, if $f(\vx') > f(\hat \vx)$, we know that $\vx' \not \in \relint(\conv(S))$.
    Otherwise, there is a $\tilde \vx \in \conv(S)$ on the line segment between $\hat \vx$ and $\vx'$ with $f(\tilde \vx) < f(\vx')$.
    This contradicts the optimality of $\vx'$, thus we have $\vx' \in \partial \conv(S)$.
    Consequently, there exists an atom $\vu \in S$ which can be dropped without increasing the objective.

    Finally, the simplex gradient descent step of BCG is a valid corrective step, as shown in Lemma 4.1 in \citet{pok18bcg}.
\end{proof}

Finally, we prove the active set identification property of CFW in finite time under the assumption of strict complementarity.

\thmFaceIdentification*
\begin{proof}
    Let $\varepsilon > 0$. For any $\vx^* \in \Xset^*$, $\vvv \in V(\Xset)$ and $\vx \in \Xset$ with $\norm{\vx - \vx^*} \leq \varepsilon$, $L$-smoothness of $f$ yields
    \begin{align*}
        \innp{\nabla f(\vx), \vvv - \vx}
        &= \innp{\nabla f(\vx^*), \vvv - \vx} + \innp{\nabla f(\vx) - \nabla f(\vx^*), \vvv - \vx} \\
        &= \innp{\nabla f(\vx^*), \vvv - \vx^*} + \innp{\nabla f(\vx^*), \vx^* - \vx} + \innp{\nabla f(\vx) - \nabla f(\vx^*), \vvv - \vx} \\
        &\geq \innp{\nabla f(\vx^*), \vvv - \vx^*} - \|\nabla f(\vx^*)\| \cdot \|\vx^* - \vx\| - \|\nabla f(\vx) - \nabla f(\vx^*)\| \cdot \|\vvv - \vx\| \\
        &\geq \innp{\nabla f(\vx^*), \vvv - \vx^*} - \|\nabla f(\vx^*)\| \cdot \|\vx^* - \vx\| - L \|\vx -\vx^*\| \cdot \|\vvv - \vx\| \\
        &\geq \innp{\nabla f(\vx^*), \vvv - \vx^*} - \|\nabla f(\vx^*)\| \cdot \varepsilon - L\cdot \varepsilon \cdot D \\
        &= \innp{\nabla f(\vx^*), \vvv - \vx^*} - \varepsilon \cdot \underbrace{\left( \|\nabla f(\vx^*)\| + LD\right)}_{\defeq c}
    \end{align*}
    Strict complementarity with constant $\rho > 0$ yields now
    $$
    \innp{\nabla f(\vx), \vvv - \vx} \geq \begin{cases}
        \rho - \varepsilon c & \vvv \not \in \cF^*, \\
        - \varepsilon c & \vvv \in \cF^*.
    \end{cases}
    $$
    For sufficiently small $\varepsilon$ we have that $\rho - \varepsilon c > \varepsilon c$, e.g., for $\varepsilon = \frac{\rho}{3c}$.
    By \cref{theorem: CFW linear convergence}, $f(\vx_T)$ converges to $f^*$.
    Since $f$ is continuous and $\Xset^*$ is compact, it follows that $\operatorname{dist}(\vx_t, \Xset^*)$ converges to 0.
    Thus, there is a $T_1$ such that $\operatorname{dist}(\vx_t, \Xset^*) \leq \varepsilon$ for all $t \geq T_1$.
    Therefore, there exists a $\vx_t^* \in \Xset^*$ such that $\|\vx_t - \vx_t^*\| \leq \varepsilon$.

    Let $\vvv_t, \va_t, \vs_t$ be defined as in \cref{alg:correctivefw}.
    Next, we consider the cases where the current iterate $\vx_t$ lies inside or outside the optimal face $\cF^*$.
    If $\vx_t \not \in \cF^*$, then there exists a $\vw \in S_t \setminus \cF^*$. Consequently, we have
    \begin{equation}
        \label{eq: gap bound only corrective step}
        \innp{\nabla f(\vx_t), \va_t - \vs_t} \geq \innp{\nabla f(\vx_t), \vw - \vx_t} \geq \rho - \varepsilon c > \varepsilon c \geq \innp{\nabla f(\vx_t), \vx_t - \vvv_t}.
    \end{equation}
    Therefore, CFW would choose to perform a corrective step. Let now $\tilde \varepsilon = \frac{(\rho - \varepsilon c)^2}{2LD^2}$.
    By \cref{theorem: CFW linear convergence} there exists a $T_2$ such that $f(\vx_t) - f^* \leq \tilde \varepsilon$ for all $t \geq T_2$.
    Next, \eqref{eq: gap bound only corrective step} yields
    $$
    f(\vx_t) - f^* \leq \frac{(\rho - \varepsilon c)^2}{2LD^2} \leq \frac{\innp{\nabla f(\vx_t), \va_t - \vs_t}^2}{2LD^2}
    $$
    and thus the optimal primal progress is smaller than the required progress for a valid descent step in \cref{alg:corrective_step}.
    Since we cannot perform descent steps and since drop steps are non-increasing,
    CFW has to perform drop steps in every iteration $t \geq \max(T_1, T_2)$ where $\vx_t \not \in \cF^*$.
    
    Assume now that we always keep one vertex $\tilde \vvv \not \in \cF^*$ in the active set.
    Then in each iteration $\vx_t$ stays outside $\cF^*$, and we would keep dropping vertices until $\vx_t = \tilde \vvv$.
    This contradicts the convergence of \cref{theorem: CFW linear convergence}.
    Therefore, there exists an iteration $\tilde T \geq \max(T_1, T_2)$ such that $S_{\tilde T} \subseteq \cF^*$ and thus $\vx_{\tilde T} \in \cF^*$.

    We turn to the case where $\vx_t \in \cF^*$.
    Strict complementarity and convexity yield
    $$
    \innp{\nabla f(\vx_t), \vvv_t - \vx_t} \leq \innp{\nabla f(\vx_t), \vx^*_t - \vx_t} \leq 0 < \rho - \varepsilon c \leq\innp{\nabla f(\vx_t), \vvv - \vx_t}
    $$
    for any $\vvv \in V(\Xset) \setminus \cF^*$.
    Therefore, we have $\vvv_t \in \cF^*$ and thus $\vx_{t+1} \in \cF^*$ as neither FW step nor corrective steps move the iterate $\vx_t$ out of $\cF^*$.
    Finally, induction yields $\vx_t \in \cF^*$ for all $t \geq \tilde T$.
\end{proof}

\section{Lazified Corrective Frank-Wolfe}
\label{section: lcfw}

In this section, we will prove the convergence results of a more general version of \cref{theorem: lcfw convergence} for the Lazified Corrective Frank-Wolfe method (LCFW) proposed in \cref{alg:lcfw}.

\begin{algorithm}
    \caption{Lazified Corrective Frank-Wolfe (LCFW)}\label{alg:lcfw}
    \begin{algorithmic}[1]
    \Require convex smooth function $f$, starting point $\vx_0 \in V(\Xset{})$, accuracy parameter $J \geq 1$.
    \State $\Phi_0 \gets \max_{\vvv \in V(\Xset{})} \innp{\nabla f(\vx_0), \vx_0 - \vvv}/2$
    \State $S_0 \gets \{\vx_0\}$
    \For{$t=0$ to $T-1$}
        \State $ \va_t \gets \argmax_{\vvv \in S_t} \innp{\nabla f(\vx_t), \vvv} $ \Comment{away vertex}
        \State $ \vs_t \gets \argmin_{\vvv \in S_t} \innp{\nabla f(\vx_t), \vvv} $ \Comment{local FW}
        \If{ $ \innp{\nabla f(\vx_t), \va_t - \vs_t} \geq \Phi_t $}
            \State $ \vx_{t+1}, S_{t+1} \gets$ CorrectiveStep($S_t, \vx_t, \va_t, \vs_t$)
            \State $\Phi_{t+1} \gets \Phi_t$
        \Else
            \State $ \vvv_t \gets \argmax_{\vvv \in V(\Xset)} \innp{\nabla f(\vx_t), \vx_t - \vvv} $ \Comment{global FW}
            \If{$ \innp{\nabla f(\vx_t), \vx_t - \vvv_t} \geq \Phi_t / J$}
                \State $ \vd_t \gets \vx_t - \vvv_t $
                \State $ \gamma_t \gets \argmin_{\gamma \in [0,1]} f(\vx_t - \gamma \vd_t)$
                \State $ \vx_{t+1} \gets \vx_t - \gamma_t \vd_t $
                \State $S_{t+1} \gets S_t \cup \{\vvv_t\}$
                \State $\Phi_{t+1} \gets \Phi_t$ \Comment{FW step}
            \Else
                \State $ \vx_{t+1} \gets \vx_t $
                \State $S_{t+1} \gets S_t$
                \State $\Phi_{t+1} \gets \Phi_t/2$ \Comment{gap step}
            \EndIf
        \EndIf
    \EndFor
    \end{algorithmic}
\end{algorithm}

\thmLcfwConv*
\begin{proof}
    Similar to the proof of \cref{theorem: CFW linear convergence}, we will prove an upper bound on the total number of necessary iterations $T$ for achieving $\varepsilon$ primal accuracy.

    Let $N_\text{FW}, N_\text{desc}, N_\text{drop}, N_\text{gap}$ denote the number of FW, descent, drop, and gap steps, respectively.
    Let $t_1, \dots, t_{N_\text{gap}}$ be the iterations of the gap steps.
    We set $t_0 = -1$ for consistency.
    Then let $N_\text{FW}^i$ and $N_\text{desc}^i$ be the number of FW and descent steps in the $i$-th epoch, the iterations between $t_{i-1}$ and $t_i$.
    To bound the total number of iterations $T$ necessary to achieve $\varepsilon$ primal accuracy, we first bound the number of gap steps $N_\text{gap}$.
    
    Let $u = t_i + 1$ for some $i \in \{1, \dots, N_\text{gap}\}$ be the iteration after a gap step.
    Note that $\vx_u = \vx_{t_i}$ and thus $\vvv_u = \vvv_{t_i}$.
    By convexity and the optimality of $\vvv_u$, we have
    \begin{equation}
        \label{eq: primal gap bound after gap step}
        f(\vx_u) - f^* \leq \innp{\nabla f(\vx_u), \vx_u - \vx^*} \leq \innp{\nabla f(\vx_u), \vx_u - \vvv_u} \leq \frac{2 \Phi_u}{J} \leq 2 \Phi_u.
    \end{equation}
    By definition of $\Phi_0$, this also holds for $u=0$.
    Furthermore, the gap step always halves the value of $\Phi$, so we have $\Phi_u = \Phi_{t_i+1} = 2^{-i} \Phi_0$.
    By bounding the right-hand side of \eqref{eq: primal gap bound after gap step} by $\varepsilon$, we get
    \begin{equation}
        \label{eq: bound N_gap}
        N_\text{gap} \leq \left \lceil \log_2 \frac{2\Phi_0}{\varepsilon} \right \rceil.
    \end{equation}
    With $N_\text{drop} \leq N_\text{FW}$ we have that
    \begin{align}
        \label{eq: upper bound total iterations}
        T &\leq N_\text{FW} + N_\text{desc} + N_\text{gap} + N_\text{drop} \notag\\
        &\leq 2 N_\text{FW} + N_\text{desc} + N_\text{gap} \notag\\
        &\leq \sum_{i=1}^{N_\text{gap}} 2 N_\text{FW}^i + N_\text{desc}^i + N_\text{gap}.
    \end{align}

    In the remainder of the proof, we find upper bounds for $2 N_\text{FW}^i + N_\text{desc}^i$ by lower bounding the progress of FW and descent steps for both cases considered for $f$.
    First, we consider the case where $f$ is convex and smooth.

    Let $t \in (t_i, t_{i+1})$, so $\Phi_t = \Phi_u$.
    If we perform a FW step, we have that $\innp{\nabla f(\vx_t), \vx_t - \vvv_t} \geq \Phi_t / J = \Phi_u / J$.
    Here we can use results from the proof of \cref{theorem: CFW linear convergence}, i.e., \eqref{eq: shortstep as lower bound} and \eqref{eq: FW step bound 2a}, which yield
    \begin{align}
        \label{eq: progress bound FW step}
        f(\vx_t) - f(\vx_{t+1}) &\geq \min \left\{\frac{\innp{\nabla f(\vx_t), \vx_t - \vvv_t}^2}{2LD^2}, \frac{1}{2} \innp{\nabla f(\vx_t), \vx_t - \vvv_t} \right\} \notag\\
        &\geq \frac{\Phi_u}{2J}\min \left\{\frac{\Phi_u}{LD^2J}, 1 \right\}.
    \end{align}
    Next, we consider the case of a descent step, i.e., $\innp{\nabla f(\vx_t), \va_t - \vs_t} \geq \Phi_t$.
    Together with the definition of the descent step, we have
    \begin{align}
        \label{eq: progress bound descent step}
        f(\vx_t) - f(\vx_{t+1}) &\geq \frac{\innp{\nabla f(\vx_t), \va_t - \vs_t}^2}{2LD^2} \geq \frac{\Phi_u^2}{2L D^2}.
    \end{align}
    We now use \eqref{eq: primal gap bound after gap step} as an upper bound of the primal progress in the $i$-th epoch, and we use \eqref{eq: progress bound FW step} and \eqref{eq: progress bound descent step} to lower bound the primal progress.
    Let $I = \lceil \log_2 \frac{\Phi_0}{LD^2J} \rceil$.
    For $i < I$ we have that $\Phi_0 \geq 2^i LD^2J$ and thus $\Phi_u \geq LD^2J$.
    Then we get with \eqref{eq: primal gap bound after gap step},
    \eqref{eq: progress bound FW step} and \eqref{eq: progress bound descent step},
    \begin{align*}
        2\Phi_u &\geq f(\vx_u) - f^* \\
        &\geq f(\vx_u) - f(\vx_{t_{i+1}}) \\
        &\geq N_\text{FW}^i \frac{\Phi_u}{2J} + N_\text{desc}^i \frac{\Phi_u^2}{2L D^2}\\
        &\geq 2 N_\text{FW}^i \frac{\Phi_u}{4J} + N_\text{desc}^i \frac{\Phi_u J}{2}\\
        & = \Phi_u \left( 2 N_\text{FW}^i \frac{1}{4J} + N_\text{desc}^i \frac{J}{2} \right),
    \end{align*}
    and thus
    \begin{equation}
        \label{eq: upper bound burn-in phase}
        2N_\text{FW}^i + N_\text{desc}^i \leq \max\left\{8J, \frac{4}{J} \right\}.
    \end{equation}

    For $i \geq I$ we have that $\Phi_u \leq LD^2J$ and thus with \eqref{eq: primal gap bound after gap step},
    \eqref{eq: progress bound FW step} and \eqref{eq: progress bound descent step},
    \begin{align*}
        2\Phi_u &\geq f(\vx_u) - f^* \\
        &\geq f(\vx_u) - f(\vx_{t_{i+1}}) \\
        &\geq N_\text{FW}^i \frac{\Phi_u^2}{2J^2LD^2} + N_\text{desc}^i \frac{\Phi_u^2}{2L D^2}\\
        & = \Phi_u^2 \left( 2 N_\text{FW}^i \frac{1}{4LD^2J^2} + N_\text{desc}^i \frac{1}{2LD^2} \right)
    \end{align*}
    Thus we have
    \begin{equation}
        2N_\text{FW}^i + N_\text{desc}^i \leq \frac{1}{\Phi_u} \max\left\{8LD^2J^2, 4LD^2 \right\} = \frac{2^{i}}{\Phi_0} \max\left\{8LD^2J^2, 4LD^2 \right\}.
    \end{equation}

    We can now bound the number of total iterations using \eqref{eq: upper bound total iterations} and \eqref{eq: bound N_gap},
    \begin{align*}
        T &\leq \sum_{i=1}^{N_\text{gap}} 2 N_\text{FW}^i + N_\text{desc}^i + N_\text{gap}\\
        & \leq I \cdot \max\left\{8J, \frac{4}{J} \right\} + \sum_{i=I}^{N_\text{gap}} \frac{2^{i}}{\Phi_0} \max\left\{8LD^2J^2, 4LD^2 \right\} + N_\text{gap}\\
        & \leq I \cdot \max\left\{8J, \frac{4}{J} \right\} + (2^{N_\text{gap} +1} - 1)\frac{1}{\Phi_0} \max\left\{8LD^2J^2, 4LD^2 \right\} + N_\text{gap}\\
        & \leq I \cdot \max\left\{8J, \frac{4}{J} \right\} + (2^{\log_2 \frac{2\Phi_0}{\varepsilon} +2} - 1)\frac{1}{\Phi_0} \max\left\{8LD^2J^2, 4LD^2 \right\} + \left \lceil \log_2 \frac{2\Phi_0}{\varepsilon} \right \rceil\\
        & \leq I \cdot \max\left\{8J, \frac{4}{J} \right\} + \frac{4}{\varepsilon} \max\left\{8LD^2J^2, 4LD^2 \right\} + \left \lceil \log_2 \frac{2\Phi_0}{\varepsilon} \right \rceil.
    \end{align*}
    Finally, \eqref{eq: primal gap bound after gap step} yields
    $$
        f(\vx_T) - f^* \leq \frac{\Phi_T}{2} \leq \frac{\varepsilon}{2} = \mathcal{O}\left(\frac{1}{T}\right).
    $$

    Consider now the case where $f$ is $(c,\theta)$-sharp and $\Xset$ is a polytope.
    Let again $u = t_i + 1$ for some $i \in \{1,\dots, N_\text{gap}\}$ be the iteration after a gap step.
    Then we have
    $$
        \innp{\nabla f(\vx_u), \va_u - \vvv_u} \leq \innp{\nabla f(\vx_u), \va_u - \vs_u} + \innp{\nabla f(\vx_u), \vx_u - \vvv_u} < \Phi_u + \Phi_u / J \leq 2 \Phi_u.
    $$
    With \cref{lemma: geomsharpness} we have that
    \begin{equation}
        \label{eq: upper bound primal gap sharp}
        f(\vx_u) - f^* \leq \left( \frac{c \innp{\nabla f(\vx_u), \va_u - \vvv_u}}{\delta} \right)^{\frac{1}{1-\theta}} \leq \left( \frac{2c \Phi_u}{\delta} \right)^{\frac{1}{1-\theta}}.
    \end{equation}
    With this new upper bound on the primal gap, we can improve the bound on $2N_\text{FW}^i + N_\text{desc}^i$ for $i \geq I$.
    Using \eqref{eq: upper bound primal gap sharp}, \eqref{eq: progress bound FW step} and \eqref{eq: progress bound descent step}, we get
    \begin{align*}
        \left( \frac{2c \Phi_u}{\delta} \right)^{\frac{1}{1-\theta}} &\geq f(\vx_u) - f^* \\
        & \geq f(\vx_u) - f(\vx_{t_{i+1}}) \\
        & \geq N_\text{FW}^i \frac{\Phi_u^2}{2J^2LD^2} + N_\text{desc}^i \frac{\Phi_u^2}{2L D^2}\\
        & \geq \frac{\Phi_u^2}{LD^2} \left( \frac{2 N_\text{FW}^i }{4J^2} + \frac{N_\text{desc}^i }{2} \right).
    \end{align*}
    Consequently, we have
    \begin{align}
        \label{eq: upper bound non-gap steps LCFW}
        2N_\text{FW}^i + N_\text{desc}^i &\leq \left(\frac{2c}{\delta}\right)^{\frac{1}{1-\theta}} LD^2\max\left\{4J^2, 2 \right\} \Phi_u^{\frac{1}{1-\theta}-2}\notag\\
        & \leq \left(\frac{2c}{\delta}\right)^{\frac{1}{1-\theta}} LD^2\max\left\{4J^2, 2 \right\} \left( \frac{\Phi_0}{2^i} \right)^{\frac{1}{1-\theta}-2}.
    \end{align}
    For $\theta = \frac12$ this yields
    \begin{equation}
        \label{eq: upper bound theta=1/2}
        \sum_{i=I}^{N_\text{gap}} 2N_\text{FW}^i + N_\text{desc}^i \leq N_\text{gap} \frac{4c^2}{\delta^2} LD^2\max\left\{4J^2, 2 \right\}.
    \end{equation}
    The bounds for the case $i < I$ in \eqref{eq: upper bound burn-in phase} stay unchanged.
    Together with \eqref{eq: bound N_gap} and \eqref{eq: upper bound total iterations} this yields,
    \begin{align*}
        T &\leq \sum_{i=1}^{N_\text{gap}} 2 N_\text{FW}^i + N_\text{desc}^i + N_\text{gap}\\
        &\leq I \cdot \max\left\{8J, \frac{4}{J} \right\} + (N_\text{gap} - I)\cdot \frac{4c^2LD^2}{\delta^2} \max\left\{4J^2, 2 \right\} + N_\text{gap}\\
        & \leq C_1 N_\text{gap} \leq C_1 \left\lceil \log_2 \frac{2\Phi_0}{\varepsilon} \right\rceil,
    \end{align*}
    where $C_1 = 1 + \max \left\{\max\left\{8J, \frac{4}{J} \right\}, \frac{4c^2LD^2}{\delta^2} \max\left\{4J^2, 2 \right\}\right\}$.
    This is equivalent to
    $$
        \varepsilon \leq 2 \Phi_0 e^\frac{1}{C_1} \exp \left(-\frac{1}{C_1}T \right) = \mathcal{O}\left(\exp\left(-\frac{1}{C_1}T\right)\right).
    $$
    Finally, with \eqref{eq: upper bound primal gap sharp} we get
    $$
        f(\vx_T) - f^* \leq \left( \frac{c \innp{\nabla f(\vx_T), \va_T - \vvv_T}}{\delta} \right)^2
        \leq \left( \frac{2c \Phi_T}{\delta} \right)^2 = \mathcal{O}(\exp(-aT)),
    $$
    with $a = \frac{2}{C_1}$.

    \ifarxiv
        For $\theta < \frac12$, \eqref{eq: upper bound non-gap steps LCFW} yields
        \begin{align*}
            &\sum_{i=I}^{N_\text{gap}} 2N_\text{FW}^i + N_\text{desc}^i \\
            \leq & \left(\frac{2c}{\delta}\right)^{\frac{1}{1-\theta}} LD^2\max\left\{4J^2, 2 \right\} \Phi_0^{\frac{1}{1-\theta}-2} \sum_{i=I}^{N_\text{gap}} 2^{i(2-\frac{1}{1-\theta})}\\
            \leq & \left(\frac{2c}{\delta}\right)^{\frac{1}{1-\theta}} LD^2\max\left\{4J^2, 2 \right\} \Phi_0^{\frac{1}{1-\theta}-2} 2^{N_\text{gap}(2-\frac{1}{1-\theta}) +1}\\
        \end{align*}
        and similarly to the previous case, we get
        \begin{align*}
            T &\leq \sum_{i=1}^{N_\text{gap}} 2 N_\text{FW}^i + N_\text{desc}^i + N_\text{gap}\\
            & \leq I \cdot \max\left\{8J, \frac{4}{J} \right\} + \left(\frac{2c}{\delta}\right)^{\frac{1}{1-\theta}} LD^2\max\left\{4J^2, 2 \right\} \Phi_0^{\frac{1}{1-\theta}-2} 2^{N_\text{gap}(2-\frac{1}{1-\theta}) +1} + N_\text{gap}\\
            & \leq I \cdot \max\left\{8J, \frac{4}{J} \right\} + \left(\frac{2c}{\delta}\right)^{\frac{1}{1-\theta}} LD^2\max\left\{4J^2, 2 \right\} \Phi_0^{\frac{1}{1-\theta}-2} 2^{\left\lceil \log_2 \frac{2\Phi_0}{\varepsilon} \right\rceil(2-\frac{1}{1-\theta}) +1} + \left\lceil \log_2 \frac{2\Phi_0}{\varepsilon} \right\rceil\\
            & \leq I \cdot \max\left\{8J, \frac{4}{J} \right\} + \left(\frac{2c}{\delta}\right)^{\frac{1}{1-\theta}} LD^2\max\left\{4J^2, 2 \right\} \Phi_0^{\frac{1}{1-\theta}-2} 4 \cdot \left(\frac{2\Phi_0}{\varepsilon} \right)^{2-\frac{1}{1-\theta}} + \left\lceil \log_2 \frac{2\Phi_0}{\varepsilon} \right\rceil\\
            & = \mathcal{O}\left(\frac{1}{\varepsilon^{\frac{1-2\theta}{1-\theta}}}\right).
        \end{align*}
        This yields $\Phi_T \leq \varepsilon = \mathcal{O}\left(\frac{1}{T^{\frac{1-\theta}{1-2\theta}}}\right)$.
        Finally, with \eqref{eq: upper bound primal gap sharp} we get
        $$
            f(\vx_T) - f^* \leq \left( \frac{c \innp{\nabla f(\vx_T), \va_T - \vvv_T}}{\delta} \right)^{\frac{1}{1-\theta}}
            \leq \left( \frac{2c \Phi_T}{\delta} \right)^{\frac{1}{1-\theta}} = \mathcal{O}\left(\frac{1}{T^{\frac{1}{1-2\theta}}}\right).
        $$
    \fi
\end{proof}

\section{Quadratic Corrections}

\subsection{QC-MNP for convex objectives}
\label{subsection: qc-mnp-convex}

In this section, we derive a variant of the QC-MNP method tailored for non-strongly convex objectives, i.e., when the affine minimizer might not be unique.
This is not only more efficient in practice but also relevant for \cref{thm:finite-step-convergence} to guarantee finite convergence. 

For convenience, we restate the relevant linear system,

\begin{equation}
    \label{eq: affine-min-eq}
 \innp{\mA \mV \vlambda + \vb, \vvv - \vw} = 0 \quad \forall ~\vvv \in S \setminus \{\vw\}, \quad
 \sum_{\vvv \in S} \vlambda_\vvv = 1.
\end{equation}

For a given solution $\lambda$, the ratio test in \cref{alg:qc-mnp} maximizes $\tau$
such that the new weights $\tau \vlambda + (1-\tau) \vlambda(\vx)$ are non-negative.
If the solution is not unique, we can choose $\vlambda$ that allows for the largest $\tau$.
This will ensure that we pick an affine minimizer inside the convex hull if such exists.

Adding the non-negativity constraint and the objective to \eqref{eq: affine-min-eq} yields the following non-linear program:
\begin{align*}
    \max_{\vlambda \in \R^m, \tau \in [0,1]} \quad & \tau \\
    \text{s.t.} \quad & \innp{\mA \mV \vlambda + \vb, \vvv - \vw} = 0 \quad \forall ~\vvv \in S \setminus \{\vw\}, \notag\\
    & \mathbf{1}^\top \vlambda = 1, \notag\\
    & \tau \vlambda + (1-\tau) \vlambda(\vx) \geq 0 \notag.
\end{align*}
As $\vx$ lies in the relative interior of $\conv(S)$, there always exists a feasible $\tau > 0$.
To obtain a linear inequality constraint, we divide the inequality constraint by $\tau$ and set $\beta = \frac{1-\tau}{\tau}$,
$$
\vlambda + \beta \vlambda(\vx) \geq 0.
$$
Since maximizing $\tau$ on $(0,1]$ is equivalent to minimizing $\beta$ on $[0,\infty)$, we obtain the following linear program:
\begin{align}
    \label{eq: qc-mnp-lp}
    \min_{\vlambda \in \R^m, \beta \geq 0} \quad & \beta \\
    \text{s.t.} \quad & \innp{\mA \mV \vlambda + \vb, \vvv - \vw} = 0 \quad \forall ~\vvv \in S \setminus \{\vw\}, \notag\\
    & \mathbf{1}^\top \vlambda = 1, \notag\\
    & \vlambda + \beta \vlambda(\vx) \geq 0 \notag.
\end{align}

\Cref{alg:qc-mnp-2} presents the alternative implementation of the QC-MNP-correction step, relying on the newly derived LP.
We use again \cref{alg:local_pairwise_step} as a fallback step if there exists no affine minimizer.

\begin{algorithm}
    \caption{Quadratic Correction MNP}\label{alg:qc-mnp-2}
    \begin{algorithmic}[0]
        \Require $S \subset \Xset{}$, $\vx \in \Xset{}$
        \State $\beta, \tilde{\vlambda} \gets $ Solve \eqref{eq: qc-mnp-lp}
        \If {feasible}
            \If {$\beta = 0$} \Comment{FCFW step}
                \State $\vx' \gets \sum_{\vvv \in S} \tilde{\vlambda}_\vvv \vvv$
                \State $S' \gets S$
            \Else \Comment{MNP step}
                \State $\tau \gets \frac{1}{\beta + 1}$
                \State $\vx' \gets \sum_{\vvv \in S} (\tau \tilde{\vlambda}_\vvv + (1-\tau) \vlambda_\vvv(\vx)) \vvv$
                \State $S' \gets \{\vvv \in S \mid \vlambda_\vvv(\vx') > 0\}$
            \EndIf
        \Else \Comment{Fallback step}
            \State $S', \vx' \gets$ \hyperref[alg:local_pairwise_step]{LPS}$(S, \vx, \va, \vs)$
        \EndIf
    \end{algorithmic}
\end{algorithm}

We can now prove that \cref{alg:qc-mnp-2} always chooses an affine minimizer in $\conv(S)$ if such exists.
\begin{lemma}
    \label{lemma:qc-mnp2}
    Let $S \subset \Xset$.
     If $\emptyset \neq \argmin_{\vx \in \conv(S)} f(\vx) \subseteq \argmin_{\vx \in \affhull(S)} f(\vx)$,
     then \cref{alg:qc-mnp-2} performs an FCFW step.
\end{lemma}
\begin{proof}
    Since the linear system in \eqref{eq: affine-min-eq} is a sufficient optimality condition for the affine minimizer,
    we know by assumption that it has at least one solution.
    Furthermore, there exists a solution which additionally satisfies $\vlambda \geq 0$.
    Therefore, $\beta = 0$ and $\vlambda$ are optimal solutions to \eqref{eq: qc-mnp-lp}.
    Consequently, \cref{alg:qc-mnp-2} performs an FCFW step.
\end{proof}

\subsection{Proofs for Quadratic Corrections}
\label{subsection: qc-proofs}

In this section, we prove the results for the quadratic corrections given in \cref{subsection: quadratic corrections}.

\propFeasibleAffineMin*
\begin{proof}
    Instead of \eqref{eq: affin-min}, we consider the equivalent system \eqref{eq: affin-min-symmetric}.
    Since the matrix $\mW^\top \mA \mW$ is symmetric, the system is feasible if and only if
    $$
        \mW^\top (\mA \vw + \vb) \in \im(\mW^\top \mA \mW) = \ker(\mW^\top \mA \mW)^\perp.
    $$
    One can easily see that $\ker(\mA \mW) \subseteq \ker(\mW^\top \mA \mW)$.
    Let $\vvv \in \ker(\mW^\top \mA \mW)$, then we have
    $$
        \vvv^\top \mW^\top \mA \mW \vvv = (\mW \vvv)^\top \mA (\mW \vvv) = 0,
    $$
    and thus $\mW \vvv \in \ker(\mA)$ since $\mA$ is positive semi-definite.
    Consequently, \eqref{eq: affin-min-symmetric} is feasible if and only if $\mW^\top (\mA \vw + \vb) \perp \vvv$ for all $\vvv \in \ker(\mA \mW)$.
    The condition simplifies as $\mA$ is symmetric,
    $$
        \vvv^\top \mW^\top (\mA \vw + \vb) =  \vvv^\top \mW^\top \mA^\top \vw + \vvv ^\top \mW^\top \vb =  (\mW \vvv)^\top \vb = 0.
    $$
    Therefore, \eqref{eq: affin-min-symmetric} is feasible if and only if $\vb \perp \vz$ for all $\vz \in \im(\mW) \cap \ker(\mA)$.
    This is equivalent to $\vb \perp \mathrm{span}(S) \cap \ker(\mA)$.
\end{proof}

\propQcCorrective*
\begin{proof}
    For the QC-LP, if the LP in \eqref{eq: qc-lp} is feasible, the solution coincides with the FCFW step.
    If the LP is infeasible, we perform a local pairwise step, which satisfies the criteria of a corrective step, as shown in \cref{prop:corrective_step}.

    For QC-MNP, if a solution of \eqref{eq: affin-min} exists and lies already in the $|S|$-simplex, the step is equivalent to the FCFW step.
    Otherwise, we perform a drop step, because at least one new weight is negative.
    The primal value does not increase, as $f$ is convex and one moves towards the affine minimizer.
    If there exists no affine minimizer, we perform a local pairwise step.
\end{proof}

\propQcAffineMin*
\begin{proof}
    For contradiction, assume the problem $\min_{\vx \in \affhull(S)} f(\vx)$ would be unbounded.
    Then there exists an $\tilde \vx \in \affhull(S)$ such that $f(\tilde \vx) < f^*$.
    If the optimal face $\cF^*$ is a singleton, $S$ would be also a singleton yielding $ \tilde \vx \in \affhull(S) \subset \Xset$
    which contradicts the minimality of $f^*$.
    If $\cF^*$ is at least one dimensional, there exists an $\vx^* \in \relint(\cF^*)$ due to minimality and convexity of $\cF^*$.
    Consequently, there exists a $\lambda \in (0,1)$ with $\vy = \lambda \tilde \vx + (1-\lambda) \vx^* \in \cF^* \subset \Xset$.
    Convexity yields then,
    $$
    f(\vy) = f(\lambda \tilde \vx + (1-\lambda) \vx^*) \leq \lambda f(\tilde \vx) + (1-\lambda) f(\vx^*) < f^*,
    $$
    which contradicts the minimality of $f^*$. Consequently, the problem is bounded below by $f^*$.
    If there exists an $\vx^* \in \Xset^* \cap \conv(S)$,
    then it is also a solution to the affine problem since $\vx^* \in \affhull(S)$.
\end{proof}

\thmFiniteStepConvergence*
\begin{proof}
    By \cref{thm:face-identification} we know that there exists a $T_1$ such that $S_t \subset \cF^*$ holds for all $t \geq T_1$.
    Let 
    $$
        \varepsilon = \min_{\substack{S \subset V(\cF^*) \\ \conv(S) \cap \Xset^* = \emptyset}} \dist(\conv(S), \Xset^*).
    $$
    By \cref{theorem: CFW linear convergence} there exists a $T_2$ such that $\dist(\vx_t, \Xset^*) \leq \varepsilon$
    and thus $\dist(\conv(S_t),\Xset^*) < \varepsilon$ holds for all $t \geq T_2$.
    Consequently, we have $\conv(S_t) \cap \Xset^* \neq \emptyset$ for all $t \geq T_2$.
    In total, we have $S_t \subset \cF^*$ and $\conv(S_t) \cap \Xset^* \neq \emptyset$ for all $t \geq \tilde T =\max(T_1, T_2)$.

    By \cref{prop:qc-affine-min}, we know there exists an $x^* \in \argmin_{\vx \in \affhull(S_t)} f(\vx) \cap \Xset^* \subset \conv(S_t)$ for all $t \geq \tilde T$.
    If CFW performs now a fully-corrective or a QC-LP step, we converge directly to an $x^* \in \Xset^*$.
    For QC-MNP, we need to use the implementation in \cref{alg:qc-mnp-2}. Otherwise, we might pick an affine minimizer not in $\Xset$, see \cref{lemma:qc-mnp2}.
    
    Finally, it remains to show that CFW performs a corrective step after at most some finite iterations after $\tilde T$.
    By \cref{thm:face-identification} we know that $\vvv_t \in \cF^*$ for all $t \geq \tilde T$.
    If $\vvv_t \in S_t$, then $\innp{\nabla f(\vx_t), \vvv_t} = \innp{\nabla f(\vx_t), \vs_t}$ and thus 
    $$
        \innp{\nabla f(\vx_t), \vx_t - \vvv_t} \leq \innp{\nabla f(\vx_t), \va_t - \vs_t}.
    $$
    Therefore, CFW performs an FW step if $\vvv_t \not \in S_t$.
    This yields that CFW could perform at most $|V(\cF^*)| - |S_{\tilde T}|$ consecutive FW steps after $\tilde T$, before performing a corrective step.
\end{proof}

\section{Split Conditional Gradient}
\label{section: Proof split conditional gradient}

In this section, we consider the Split Conditional Gradient (SCG) method of \citet{woodstock2025splitting}.
The algorithm is presented in \cref{alg:SCG}.

\begin{algorithm}[H]
    \caption{Split Conditional Gradient (SCG)}\label{alg:SCG}
    \begin{algorithmic}[1]
        \Require Convex, smooth function $f$, weights $\{w_i\}_{i \in I} \subset (0,1)$ with $\sum_{i \in I} w_i = 1$, starting point $\vx_0 \in \xci$
        \State $\bar \vx_0 \gets \sum_{i \in I} w_i \vx_0$
        \For{$t=0, 1, \dots$}
            \State Choose penalty parameter $\lambda_t > 0$
            \State Choose step size $\gamma_t \in (0,1)$
            \State $\vg_t \gets \nabla f(\bar \vx_t)$
            \For{$i \in I$}
                \State $\vvv_t^i \gets \argmin_{\vvv \in \mathcal{H}} \innp{\vg_t + \lambda_t(\vx_t^i - \bar \vx_t), \vvv}$
                \State $\vx_t^i \gets \vx_t^i + \gamma_t(\vvv_t^i - \vx_t^i)$
            \EndFor
            \State $\bar \vx_{t+1} \gets \sum_{i \in I} w_i \vx_t^i$
        \EndFor
    \end{algorithmic}
\end{algorithm}

We prove the convergence rate of the SCG method stated in \cref{theorem: SCG convergence} in a more general setting, i.e., for arbitrary Hilbert spaces and any finite intersection.
Let $\mathcal{H}$ be a real Hilbert space, let $\bch = \mathcal{H}^m$ be the product space of $\mathcal{H}$.
We denote the components of $\vx \in \bch$ as $\vx = (\vx^1, \ldots, \vx^m)$.
Let $\boldsymbol{D} = \{\vx \in \bch \mid \vx^1 = \vx^2 = \dots = \vx^m\}$ denote the diagonal space of $\bch$.
Furthermore, let $I = \{1, \ldots, m\}$ and let $\{w_i\}_{i \in I}$ be a selection of weights such that $w_i > 0$ for all $i \in I$ and $\sum_{i \in I} w_i = 1$.
The averaging operator is defined as $A \colon \bch \to \ch \colon \vx \mapsto \sum_{i \in I} w_i \vx^i$.

\begin{proposition}{\citep[Proposition 2.13]{woodstock2025splitting}}
    \label{prop: woodstock}
    Let $f: \mathcal{H} \to \R$, let $(\Xset_i)_{i \in I}$ be a finite selection of non-empty, compact, and convex sets of $\mathcal{H}$.
    For every $\lambda \geq 0$, set $F_\lambda(\vx) = f(A\vx) + \frac{\lambda}{2} \dist_{\boldsymbol{D}}^2(\vx)$.
    Suppose that $(\lambda_n)_{n \in \N} \nearrow \infty$. Then
    $$
        \lim_{t \to \infty} 
        \left(\inf_{\vx \in \xci} F_{\lt}(\vx)\right) 
        = \inf_{\vx \in \xci}
         \left( \lim_{t\to \infty} F_\lt(\vx) \right) = \inf_{\vx \in \cci} f(\vx).
    $$
\end{proposition}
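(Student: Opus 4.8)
The plan is to prove the two equalities separately, writing $v_t \defeq \inf_{\vx\in\xci}F_{\lt}(\vx)$ and $v^* \defeq \inf_{\vx\in\cci}f(\vx)$, and to treat the rightmost equality (evaluation of the pointwise limit) as the easy part and the leftmost equality (the exchange of $\lim$ and $\inf$) as the substantive one. For the middle-to-right equality I would fix $\vx\in\xci$ and use $\lt\nearrow\infty$ together with $\dist^2_{\boldsymbol{D}}(\vx)\ge 0$ to evaluate $\lim_{t\to\infty}F_{\lt}(\vx)$: it equals $f(A\vx)$ when $\dist_{\boldsymbol{D}}(\vx)=0$ and $+\infty$ otherwise. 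Since $\boldsymbol{D}$ is the diagonal, a point $\vx\in\xci$ satisfies $\dist_{\boldsymbol{D}}(\vx)=0$ if and only if $\vx=(z,\dots,z)$ with $z\in\cci$, in which case $A\vx=z$. Minimizing the pointwise limit over $\xci$ therefore discards every off-diagonal point and reduces to $\inf_{z\in\cci}f(z)=v^*$, giving the middle $=$ right equality.

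For the left-to-right equality I would show $v_t\to v^*$ in two steps. The upper bound is immediate: for any $z\in\cci$ the diagonal point $(z,\dots,z)\in\xci$ has zero penalty, so $v_t\le F_{\lt}(z,\dots,z)=f(z)$, and taking the infimum over $z$ yields $v_t\le v^*$ for every $t$. Because $\lt$ is increasing and the penalty is non-negative, $t\mapsto F_{\lt}(\vx)$ is non-decreasing for each fixed $\vx$, hence $v_t$ is non-decreasing; being bounded above by $v^*$, it converges to some $v_\infty\le v^*$. It then remains to establish the reverse inequality $v_\infty\ge v^*$.

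For this lower bound I would exploit compactness. Since $\xci$ is compact (a finite product of compact sets) and $F_{\lt}$ is continuous ($f$ is smooth, hence continuous, $A$ is bounded linear, and $\dist_{\boldsymbol{D}}$ is $1$-Lipschitz), the infimum is attained at some $\vx_t\in\xci$. Setting $m_f\defeq\min_{\vx\in\xci}f(A\vx)$, which is finite by compactness, the relation $v_t=f(A\vx_t)+\frac{\lt}{2}\dist^2_{\boldsymbol{D}}(\vx_t)\le v^*$ gives $\dist^2_{\boldsymbol{D}}(\vx_t)\le 2(v^*-m_f)/\lt\to 0$. Extracting a strongly convergent subsequence $\vx_{t_k}\to\bar\vx\in\xci$ and using continuity of $\dist_{\boldsymbol{D}}$ yields $\dist_{\boldsymbol{D}}(\bar\vx)=0$, so $\bar\vx=(\bar z,\dots,\bar z)$ with $\bar z\in\cci$ and $A\bar\vx=\bar z$. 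Dropping the non-negative penalty and invoking continuity of $f\circ A$ gives $v_\infty=\lim_k v_{t_k}\ge\lim_k f(A\vx_{t_k})=f(\bar z)\ge v^*$. Combined with $v_\infty\le v^*$ this forces $v_\infty=v^*$, completing the left $=$ right equality.

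The main obstacle is precisely this lower bound, i.e.\ the exchange of $\lim_t$ and $\inf_{\vx}$: one must rule out minimizing sequences that keep the objective low while wandering off the diagonal. The key mechanism is that a bounded objective forces the penalty term to vanish as $\lt\to\infty$, which, via compactness and continuity of $\dist_{\boldsymbol{D}}$, pins any subsequential limit onto $\boldsymbol{D}\cap\xci\cong\cci$. The only technical points to verify carefully are that $f\circ A$ is bounded and continuous on the compact set $\xci$ in the Hilbert-space setting, and that compactness of $\xci$ supplies a strongly (norm-)convergent subsequence; both follow from the standing assumptions that the $\Xset_i$ are compact and $f$ is smooth.
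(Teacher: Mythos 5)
Your proof is correct, but note that the paper does not actually prove this statement: it is imported verbatim as \citep[Proposition~2.13]{woodstock2025splitting} and used as a black box in the proof of the SCG convergence theorem, so there is no in-paper argument to compare against. What you have written is a clean, self-contained penalty-method (epi-convergence) argument: the upper bound $v_t \le v^*$ from testing on diagonal points, monotonicity of $t \mapsto v_t$ from $\lambda_t \nearrow \infty$, and the lower bound by showing that the attained minimizers $\vx_t$ must have $\dist^2_{\boldsymbol{D}}(\vx_t) \le 2(v^*-m_f)/\lambda_t \to 0$, then passing to a norm-convergent subsequence whose limit lies on the diagonal. All steps check out. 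Two small caveats worth making explicit. First, the proposition as transcribed only assumes $f\colon \mathcal{H}\to\R$; your argument needs $f$ (hence $f\circ A$) continuous for Weierstrass attainment and for passing to the limit along the subsequence, and it needs $\cci \neq \emptyset$ for the upper bound $v_t \le v^* < \infty$ to have content --- both are standing assumptions of the surrounding theorem (where $f$ is convex and $L$-smooth and $P\cap Q \neq \emptyset$), and you correctly flag the continuity point, but they should be stated as hypotheses if the proposition is read in isolation. Second, your identification $\dist_{\boldsymbol{D}}(\vx)=0 \iff \vx \in \boldsymbol{D}$ silently uses that the diagonal is a closed subspace of $\bch$, which is true but deserves a word. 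With those remarks, the argument is complete and arguably more transparent than deferring to the cited reference.
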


\begin{theorem}
    Let $f \colon \mathcal{H} \to \R$ be convex and $L$-smooth and
    let $(\Xset_i)_{i \in I}$ be a finite selection of non-empty, compact, and convex sets of $\mathcal{H}$ with diameters $\{R_i\}_{i \in I}$ and $\cci \neq \emptyset$.
    For $\lambda \geq 0$, set $F_\lambda(\vx) = f(A\vx) + \frac{\lambda}{2} \dist_{\boldsymbol{D}}^2(\vx)$,
    $\vx_t^* \in \argmin_{\vx \in \bch} F_{\lambda_t}(\vx)$ and $H_t = F_{\lambda_t}(\vx_t) - F_{\lambda_t}(\vx_t^*)$.
    For $\lambda_t = \ln(t+2)$ and the step size $\gamma_t = \frac{2}{\sqrt{t+2}\ln(t+2)}$ for all $t \geq 0$, the iterates of SCG satisfy
    \begin{equation}
        \label{eq: SCG convergence general}
        0 \leq H_t \leq \frac{2R^2(L + 1) + \sqrt{2}c_f}{\sqrt{t+2}}
    \end{equation}
    for all $t \geq 0$, where $c_f \defeq \max_{x, y \in \xci}  f(Ax) - f(Ay) < \infty$ and $R^2 \defeq \sum_{i \in I} w_i R_i^2$.

    Furthermore, we get that $\lim_{t \to \infty} F_\lt(\vx_t) = \inf_{\vx \in \xci} f(\vx)$ and $\lim_{t \to \infty} \dist_{\boldsymbol{D}}(\vx_t) = 0$.
    In particular, any accumulation point $\vxt$ of the sequence $\{\vx_t\}_{t \geq 0}$ lies in $\cci$
    and satisfies $f(A\vxt) = \inf_{\vx \in \xci} f(\vx)$.
\end{theorem}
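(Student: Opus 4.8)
The plan is to recognize SCG as a plain Frank--Wolfe method run on the time-varying objective $F_\lt$ over the product set $\xci$, but measured in the \emph{weighted} inner product $\innp{\vx,\vy}_w \defeq \sum_{i\in I} w_i \innp{\vx^i,\vy^i}$ on $\bch$, and then to push a standard FW descent estimate through the change of penalty. First I would record three facts in this geometry. (i) With respect to $\norm{\cdot}_w$ the projection onto the diagonal $\boldsymbol D$ is the diagonal lift of $A\vx$, so $\dist_{\boldsymbol D}^2(\vx) = \sum_{i\in I} w_i\norm{\vx^i - A\vx}^2$ and the penalty $\frac{\lt}{2}\dist_{\boldsymbol D}^2$ is $\lt$-smooth; since $\vx\mapsto f(A\vx)$ has weighted gradient equal to the diagonal lift of $\nabla f(A\vx)$ and is $L$-smooth in $\norm{\cdot}_w$, the map $F_\lt$ is $(L+\lt)$-smooth. (ii) The weighted LMO over $\xci$ separates across blocks, so lines 7--8 of \cref{alg:SCG} are exactly one FW step on $F_\lt$ with step $\gamma_t$. (iii) The weighted diameter obeys $\mathrm{diam}_w(\xci)^2 \le \sum_{i\in I} w_i R_i^2 = R^2$.

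Given this, the smoothness inequality together with convexity and feasibility of $\vx_t^*\in\xci$ (so that $\innp{\nabla F_\lt(\vx_t),\vx_t-\vvv_t}_w \ge H_t$) yields the single-objective bound $F_\lt(\vxtt) - F_\lt(\vx_t^*) \le (1-\gamma_t)H_t + \tfrac{\gamma_t^2(L+\lt)}{2}R^2$. The next step is to absorb the penalty change. Two observations do this: since $F_\ltt \ge F_\lt$ pointwise, the optimal values are monotone, $F_\ltt(\vxtt^*) \ge F_\lt(\vx_t^*)$; and $F_\ltt(\vxtt) = F_\lt(\vxtt) + \tfrac{\ltt-\lt}{2}\dist_{\boldsymbol D}^2(\vxtt)$. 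Combining these gives
\begin{equation*}
H_{t+1} \le (1-\gamma_t)H_t + \frac{\gamma_t^2(L+\lt)}{2}R^2 + \frac{\ltt-\lt}{2}\dist_{\boldsymbol D}^2(\vxtt).
\end{equation*}
The crucial point is that bounding the last distance by $R^2$ is \emph{too weak} (the penalty-increase term would then dominate the contraction). Instead I would bound it through the objective: testing $\vxtt^*$ against the diagonal lift of a point $\vz\in\cci$ gives $F_\ltt(\vxtt^*)\le f(\vz)$, whence $\tfrac{\ltt}{2}\dist_{\boldsymbol D}^2(\vxtt) = H_{t+1} + F_\ltt(\vxtt^*) - f(A\vxtt) \le H_{t+1} + c_f$. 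This converts the penalty-increase term into $\tfrac{\ltt-\lt}{\ltt}(H_{t+1}+c_f)$, i.e.\ a factor of order $1/((t+2)\ln(t+2))$ times $c_f$, plus a higher-order $H_{t+1}$ term that is moved to the left-hand side.

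With $\lt=\ln(t+2)$ and $\gamma_t = \tfrac{2}{\sqrt{t+2}\,\ln(t+2)}$ the recursion takes the schematic form $(1-\beta_t)H_{t+1} \le (1-\gamma_t)H_t + \tfrac{\gamma_t^2(L+\lt)}{2}R^2 + \beta_t c_f$ with $\beta_t = \tfrac{\ltt-\lt}{\ltt}\le \tfrac{1}{(t+2)\ln(t+2)}$. I would then prove $H_t \le \tfrac{C}{\sqrt{t+2}}$ with $C = 2R^2(L+1)+\sqrt2\,c_f$ by induction; this is the main computational obstacle, where the constants are pinned down. The useful identities are $\gamma_t\lt = \tfrac{2}{\sqrt{t+2}}$ and $\gamma_t^2\lt = \tfrac{4}{(t+2)\ln(t+2)}$, so the contraction gain $\gamma_t H_t$, whose leading coefficient is $2C$, must dominate the additive coefficient $2R^2+c_f$ (from $\tfrac{\gamma_t^2\lt}{2}R^2$ and $\beta_t c_f$) as well as the discretization slack $\tfrac{C}{\sqrt{t+2}}-\tfrac{C}{\sqrt{t+3}} = O((t+2)^{-3/2})$ and the lower-order $L$-term; the choice of $C$ is exactly what makes $2C = 4R^2(L+1)+2\sqrt2\,c_f$ exceed $2R^2+c_f$ with room to spare, and the base case follows from $H_0 \le c_f + \tfrac{\ln 2}{2}R^2 \le C/\sqrt2$. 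One also checks $\gamma_t\le1$ for the FW step to be admissible, clipping the first few indices where $\gamma_t>1$, for which the bound holds directly.

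Finally I would read off the asymptotics. From $\tfrac{\lt}{2}\dist_{\boldsymbol D}^2(\vx_t)\le H_t + c_f$ and $\lt=\ln(t+2)\to\infty$ with $H_t\to0$ we get $\dist_{\boldsymbol D}(\vx_t)\to0$. Writing $F_\lt(\vx_t)=H_t + \min_{\xci}F_\lt$ and invoking \cref{prop: woodstock} ($\min_{\xci}F_\lt \to \inf_{\cci}f$) gives $F_\lt(\vx_t)\to\inf_{\cci}f$. For an accumulation point $\vxt$ of a subsequence $\vx_{t_k}$, continuity and $\dist_{\boldsymbol D}(\vx_{t_k})\to0$ force $\vxt\in\boldsymbol D$, so all blocks coincide and, being limits in the closed sets $\Xset_i$, the common value lies in $\cci$; then $\inf_{\cci}f = \lim F_{\lambda_{t_k}}(\vx_{t_k}) \ge \lim f(A\vx_{t_k}) = f(A\vxt) \ge \inf_{\cci}f$ yields $f(A\vxt)=\inf_{\cci}f$. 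The one genuinely delicate ingredient is the refined distance bound feeding the induction; everything else is bookkeeping with the two schedules.
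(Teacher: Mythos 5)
Your proposal is correct and reaches the stated bound, but it handles the one genuinely delicate ingredient --- the growth of the penalty --- by a different mechanism than the paper. The paper passes to the rescaled auxiliary objective $\wF_\lambda = F_\lambda/\lambda$, in which the quadratic penalty $\tfrac12\dist_{\boldsymbol D}^2$ is $\lambda$-free; the cross-iteration change then sits entirely in the term $f(A\cdot)/\lambda$ and is bounded by $c_f\bigl(\tfrac1\lt-\tfrac1\ltt\bigr)=O\bigl(\tfrac{1}{(t+2)\ln(t+2)^2}\bigr)$, after which the induction is run on $\wH_t\le \tfrac{C}{\sqrt{t+2}\ln(t+2)}$ and the theorem follows from $H_t=\lt\wH_t$. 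You instead stay with the unscaled $F_\lt$ and control the additive term $\tfrac{\ltt-\lt}{2}\dist_{\boldsymbol D}^2(\vxtt)$ by the self-bounding estimate $\tfrac{\ltt}{2}\dist_{\boldsymbol D}^2(\vxtt)\le H_{t+1}+c_f$ (obtained by testing $F_\ltt$ at a diagonal point of $\cci$), which turns it into $\beta_t(H_{t+1}+c_f)$ with $\beta_t=O\bigl(\tfrac{1}{(t+2)\ln(t+2)}\bigr)$ absorbed on the left. The two devices are precisely parallel --- both replace the naive $\dist^2\le R^2$ bound (which would cap the rate at $\ln t/\sqrt t$, as in the original analysis of \citet{woodstock2025splitting}) by an $O(1/\ln t)$ bound on the effective penalty increase --- and your recursion is, up to factors $\ltt/\lt=1+O(\beta_t)$, the paper's recursion multiplied through by $\ltt$. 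Your version has the minor advantages of making the mechanism transparent without introducing $\wF_\lambda$, of yielding the quantitative statement $\dist_{\boldsymbol D}^2(\vx_t)\le 2(H_t+c_f)/\lt$ used for the asymptotics (which the paper only asserts), and of explicitly flagging that $\gamma_t>1$ for $t\in\{0,1\}$ so those indices need clipping or separate treatment --- an issue the paper's induction silently skips (its step $(1-\gamma_t)\wH_t\le(1-\gamma_t)\wG_t$ requires $\gamma_t\le 1$). The only part you leave as a sketch is the closing of the induction with the exact constant $C=2R^2(L+1)+\sqrt2\,c_f$; the leading-order balance $2C\ge 2R^2+c_f$ you identify is the right one and does close for $t\ge 2$ with the slack you describe, so this is bookkeeping rather than a gap.
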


\begin{proof}
    This proof is inspired by the proof of \citet{woodstock2025splitting}.
    The key difference is to analyze the alternative auxiliary objective,
    $$
        \wF_\lambda (\vx) = \frac{F_\lambda(\vx)}{\lambda} = \frac{f(A\vx)}{\lambda} + \frac 1 2 \dist_{\mathbf{D}}^2(\vx).
    $$
    We will see that one can find smaller upper bounds on the primal gap $\wH_t = \wF_\lt(\vx_t) - \wF_\lt (\vx_t^*)$.
    However, even for $\lt \wH_t = H_t$, we achieve faster rates of convergence.
    Note that
    $$
        \vx_t^* \in \argmin_{\vx \in \xci} F_\lt (\vx) = \argmin_{\vx \in \xci} \wF_\lt (\vx),
    $$
    since $F_\lt$ and $\wF_\lt$ only differ in scaling.
    Furthermore, the function $\wF_\lt$ is $\left(\frac{L}{\lt} + 1\right)$-smooth.

    Analogous to Lemma 3.2 in \citet{woodstock2025splitting}, we first prove a bound on the primal gap $\wH_t$.
    Using smoothness, the FW update rule, the optimality of the FW vertex $\vvv_t$, and convexity, we get
    \begin{align*}
    & \wF_\lt\left(\vxtt\right)-\wF_\lt\left(\vx_t\right) \\
    \leq &\left\langle\nabla \wF_\lt \left(\vx_t\right), \vxtt-\vx_t\right\rangle+\frac{\frac{L}{\lt}+1}{2}\left\|\vxtt-\vx_t\right\|^2\\
    = &\gamma_t\left\langle\nabla \wF_{\lambda_t}(\vx_t), \vvt-\vx_t\right\rangle+\frac{\frac{L}{\lt}+1}{2} \gamma_t^2\left\|\vvt-\vx_t\right\|^2 \\
    \leq & \gamma_t\left\langle\nabla \wF_{\lt}(\vx_t), \vx_t^*-\vx_t\right\rangle+\frac{\frac{L}{\lt}+1}{2}\gamma_t^2 R^2\\
    \leq & \gamma_t\left(\wF\left(\vx_t^*\right)-\wF_{\lambda_t}(\vx_t)\right)+\gamma_t^2 R^2 \frac{\frac{L}{\lt}+1}{2}
    \end{align*}
    where $R^2$ is an upper bound on
    $\|\vx - \vy\|^2$ for all $\vx, \vy \in \cci$, see \citet[Lemma 3.1]{woodstock2025splitting}.

    Adding $\wF_\lt(\vx_t) - \wF_\lt(\vx_t^*)$ to both sides leads to
    $$
        \wF_\lt(\vxtt) - \wF_\lt(\vx_t^*) \leq (1- \gamma_t) \underbrace{(\wF_\lt(\vx_t) - \wF_\lt(\vx_t^*))}_{\wH_t} + \gamma_t^2 R^2 \frac{\frac{L}{\lt}+1}{2}.
    $$
    Together with the definition of $\wF_\lt$ and the optimality of $\vx_t^*$ yields
    \begin{align}
    \wH_{t+1} &=\wF_{\ltt}\left(\vxtt\right)-\wF_{\ltt}\left(\vxtt^*\right) \notag\\
    & = \wF_\lt(\vxtt) - \wF_\lt(\vxtt^*) + \left( \frac 1 \ltt - \frac 1 \lt \right) \left(f(A\vxtt) - f(A\vxtt^*) \right) \notag \\
    & = \wF_\lt(\vxtt) - \wF_\lt(\vxtt^*) + \left( \frac 1 \lt - \frac 1 \ltt \right) \left(f(A\vxtt^*) - f(A\vxtt) \right) \notag \\
    & \leq \wF_\lt(\vxtt) - \wF_\lt(\vxtt^*) + c_f \left( \frac 1 \lt - \frac 1 \ltt \right) \notag \\
    & \leq \wF_\lt(\vxtt) - \wF_\lt(\vx_t^*) + c_f \left( \frac 1 \lt - \frac 1 \ltt \right) \notag \\
    & \leq\left(1-\gamma_t\right) \wH_t+\gamma_t^2 R^2 \frac{\frac{L}{\lt}+1}{2}+c_f\left(\frac{1}\lt-\frac{1}{\ltt}\right), \label{eq:primal gap recursive ineq 2}
    \end{align}
    where $c_f = \max_{\vx,\vy \in \xci} f(A\vx) - f(A\vy) < \infty$.
    The result in \eqref{eq:primal gap recursive ineq 2} is analogous to the result in Lemma 3.2 in \citet{woodstock2025splitting}.
    However, it involves a difference of inverse values of $\lt$, which will be the key difference here.
    This allows us to use a larger step size $\gamma_t = \frac{2}{\sqrt{t+2}\ln(t+2)}$ for the SCG method.

    For the given $\lt = \ln(t+2)$ and by using $\ln(x) \leq x-1$, we obtain
    \begin{equation}
        \frac{1}{\lt} - \frac{1}{\ltt}
        \leq \frac{\ltt- \lt}{\lt^2}
        = \frac{\ln \left( \frac{t+3}{t+2} \right)}{\ln(t+2)^2}
        \leq \frac{1}{(t+2)\ln(t+2)^2}. \label{eq: reciprocal ineq}
    \end{equation}
    We can now show by induction that
    \begin{equation}
        0 \leq \wH_t \leq \wG_t \defeq \frac{2R^2(L+1) + \sqrt{2}c_f}{\sqrt{t+2}\ln(t+2)}.
    \end{equation}
    For $t=0$ we have
    \begin{align*}
        \wH_0 &= \wF_{\lambda_0}(\vx_0) - \wF_{\lambda_0}(\vx_0^*) \\
        &= \frac{f(A\vx_0) - f(A\vx_0^*)}{\lambda_0} + \frac{\dist_{\boldsymbol{D}}^2(\vx_0) - \dist_{\boldsymbol{D}}^2(\vx_0^*)}{2} \\
        &\leq \frac{c_f}{\ln(2)} + \frac{R^2}{2}\\
        &\leq \frac{2R^2(L+1) + \sqrt{2}c_f}{\sqrt{2}\ln(2)} = \wG_0.
    \end{align*}
    The induction hypothesis together with \eqref{eq:primal gap recursive ineq 2} and \eqref{eq: reciprocal ineq} yields
    \begin{align*}
        \wH_{t+1} &\leq (1-\gamma_t) \wH_t + \gamma_t^2 R^2 \frac{\left(\frac{L}{\lt}+1\right)}{2} + c_f \left( \frac{1}{\lt} - \frac{1}{\ltt}  \right) \\
        & \leq \frac{\sqrt{t+2}\ln(t+2) - 2}{\sqrt{t+2}\ln(t+2)} \wG_t  + \frac{2R^2(L + 1)}{(t+2)\ln(t+2)^2} + \frac{\sqrt{2}c_f}{(t+2)\ln(t+2)^2}\\
        & = \frac{\sqrt{t+2}\ln(t+2)-2+1}{(t+2)\ln(t+2)^2} \cdot (2R^2(L+1)+\sqrt{2}c_f) \\
        & \leq \frac{1}{\sqrt{t+3}\ln(t+3)} \cdot (2R^2(L+1)+\sqrt{2}c_f) = \wG_{t+1}.
    \end{align*}
    To finish the induction, it is left to show that
    $$
    \frac{\sqrt{t+2}\ln(t+2)-1}{(t+2)\ln(t+2)^2} \leq \frac{1}{\sqrt{t+3}\ln(t+3)}.
    $$
    Consider
    \begin{align*}
        &\sqrt{t+2}\ln(t+2) - \sqrt{t+1}\ln(t+1)\\
        =& \ln\left(\frac{(t+2)^{\sqrt{t+2}}}{(t+1)^{\sqrt{t+1}}}\right)\\
        =& \ln \left( \left(\frac{t+2}{t+1}\right)^{\sqrt{t+2}} \cdot \frac{1}{(t+1)^{\sqrt{t+2} - \sqrt{t+1}}}\right)\\
        =& \sqrt{t+2} \ln \left(1+\frac{1}{t+1}\right) - (\sqrt{t+2} - \sqrt{t+1})\ln(t+1)\\
        \leq & \frac{\sqrt{t+2}}{t+1} \defeq \phi(t).
    \end{align*}
    The function $\phi(t)$ is monotonically decreasing and satisfies $\phi(t) \leq 1$ for $t \geq 1$.
    Thus, we have
    \begin{equation}
        \sqrt{t+2}\ln(t+2) - 1 \leq \sqrt{t+1}\ln(t+1),
        \label{eq: log-inequality}
    \end{equation}
    for all $t \geq 1$.
    One can even see that \eqref{eq: log-inequality} holds for $t=0$.
    It remains to show that
    $$
        \sqrt{t+1}\ln(t+1) \sqrt{t+3}\ln(t+3) \leq (t+2)\ln(t+2)^2.
    $$
    One can easily see that
    $$
        \sqrt{t+1} \sqrt{t+3} = \sqrt{t^2 + 4t+3} \leq \sqrt{t^2+4t+4} = t+2.
    $$
    Furthermore, using twice the concavity of $\ln$ yields
    \begin{align*}
        \ln(\ln(t+1) \cdot \ln(t+3)) &= \ln(\ln(t+1)) + \ln(\ln(t+3)) \\
        & \leq 2 \ln \left( \frac{\ln(t+1)}{2} + \frac{\ln(t+3)}{2} \right) \\
        & \leq 2 \ln \left(\ln\left(\frac{t+1}{2} + \frac{t+3}{2}\right)\right) \\
        &= 2 \ln(\ln(t+2)) = \ln(\ln(t+2)^2),
    \end{align*}
    and thus
    $$
        \ln(t+1) \cdot \ln(t+3) \leq \ln(t+2)^2.
    $$
    This concludes the induction. Using $H_t = \lt \wH_t$ yields the proposed upper boundary
    $$
        H_t = \lt \wH_t \leq \frac{2R^2(L+1) + \sqrt{2}c_f}{\sqrt{t+2}} = \mathcal{O}\left(\frac{1}{\sqrt{t}}\right).
    $$
    Together with \cref{prop: woodstock} we get that $\lim_{t \to \infty} F_\lt(\vx_t)$ exists
    and that $$\lim_{t \to \infty} F_\lt(\vx_t) = \lim_{t \to \infty} F_\lt(\vx_t^*) = \inf_{\vx \in \xci} f(\vx).$$
    As $\lt \to \infty$, we have $\dist_{\boldsymbol{D}}^2 (\vx_t) \to 0$.
    Thus, every accumulation point $\vxt$ lies in the diagonal space $\boldsymbol{D}$ and therefore $A\vxt \in \cci$.
    Considering a subsequence $(t_k)_{k \in \N}$ we get
    $$
        \inf_{\vx \in \cci} f(\vx) \leq f(A\vxt) = \lim_{k \to \infty} f(A\vx_{t_k}) \leq \lim_{k \to \infty} F_{\lambda_{t_k}}(\vx_{t_k}) = \inf_{\vx \in \cci} f(\vx).
    $$
\end{proof}

\section{Second-Order Conditional Gradient Sliding}
\label{section: appendix socgs}
In this section, we first introduce the SOCGS algorithm before proving its convergence for self-concordant functions.

\subsection{SOCGS algorithm}
Introduced in \citet{carderera2020second}, the SOCGS algorithm minimizes 
a smooth strongly convex function~\(f\) with Lipschitz continuous Hessian~\(\nabla^2 f\) over a polytope~\(\cx\). 
For each iteration~\(t\), a quadratic approximation~\(\hat f_t\) is minimized over the polytope~\(\cx\) using a projection-free method. Minimizing such quadratic forms over a polytope amounts to a \emph{Projected Variable-Metric} (PVM) algorithm \citet{nesterov2018lectures,bental2023lecture} (we state PVM in Algorithm~\ref{alg: pvm}).

To do so, a Hessian oracle~\(\Omega\) yields for each iteration~\(t\) an approximation~\(\mH_t\) of the 
Hessian~\(\nabla^2 f(\vx_t)\) at the current iterate~\(\vx_t\).
The quadratic approximation\footnote{The norm~$\norm{\vx-\vy}_{\mH} = \norm{\mH^{1/2}(\vx-\vy)} $ is induced by a symmetric positive definite matrix~\(\mH\).}~\(\hat f_t(\vx)= \innp{\nabla f(\vx_t),\vx - \vx_t} + \frac{1}{2} \norm{\vx-\vx_t}^2_{\mH_t}\) is then built and minimized inexactly using a Corrective Frank-Wolfe algorithm.
We call this an \emph{Inexact PVM step}.
This step is solved up to some precision on the Frank-Wolfe gap given by the
threshold~$\varepsilon_t$. This threshold involves the computation
of a lower bound~$lb(\vx_t) \leq f(\vx_t)-f^*$ on the primal gap.

On top of the Inexact PVM step, the SOCGS algorithm also performs independent corrective steps, as presented in 
Section~\ref{section: corrective frank wolfe}. We call these steps the \emph{Outer Corrective Steps} (OCS), or \emph{outer steps} for short. 
Hence, the SOCGS algorithm enjoys the global convergence rate of the outer steps and the local convergence rate of the Inexact PVM steps. 

Now, we present the pseudo-code of the SOCGS algorithm. The statement of Algorithm~\ref{alg: socgs} is directly adapted from \citet{carderera2020second}. Compared to its original statement, we distinguish the OCS from the \emph{Inner Corrective Steps} (ICS), or \emph{inner steps} for short, used inside the Inexact PVM step.

\begin{algorithm}[H]
    \caption{Second-Order Conditional Gradient Sliding (SOCGS)}\label{alg: socgs}
    \begin{algorithmic}[1]
    \Require Point $\vx \in \cx$
    \Ensure Point $\vx_T \in \cx$
    %
    %
    \State $\vx_0 \gets\argmin_{\vvv \in \cx} \innp{\nabla f(\vx),\vvv}, S_0 \gets \{\vx_0\}, \vlambda_0(\vx_0) \gets 1$
    \State $\vx_0^{\text{OCS}} \gets \vx_0,S_0^{\text{OCS}} \gets S_0, \vlambda_0^{\text{OCS}}(\vx_0) \gets 1 $
    \For{$t=0$ to $T-1$}
        \State $ \vx_{t+1}^{\text{OCS}},S_{t+1}^{\text{OCS}},\vlambda_{t+1}^{\text{OCS}} \gets
        \text{OCS}(\nabla f(\vx_t),\vx_{t}^{\text{OCS}},S_{t}^{\text{OCS}},\vlambda_{t}^{\text{OCS}} )
        $ \Comment{Outer Corrective Step}
    \State $\mH_t \gets \Omega(\vx_t) $ \Comment{Call Hessian oracle}
    \State $\hat f_t(\vx) \gets \innp{\nabla f(\vx_t   ),\vx - \vx_t} + \frac{1}{2} \norm{\vx-\vx_t}^2_{\mH_t}$ \Comment{Build quadratic approximation}
    \State $\varepsilon_t \gets \left( \frac{lb(\vx_t)}{\norm{\nabla f (x_t)}}\right)^4$ \label{line: threshold computation}
    \State $\tilde \vx_{t+1}^0 \gets \vx_t, \tilde S^0_{t+1} \gets S_t, \tilde \vlambda^0_{t+1} \gets \vlambda_t, h \gets 0$
    \While{$\max_{\vvv \in \cx} \innp{\nabla \hat f_t(\tilde \vx^h_{t+1}),\tilde \vx^h_{t+1}-\vvv} \geq \varepsilon_t $} \Comment{Compute Inexact PVM step} \label{line: pvm step}
        \State $ \tilde\vx^{h+1}_{t+1},\tilde S^{h+1}_{t+1},\tilde \vlambda^{h+1}_{t+1} \gets
        \text{ICS}(\nabla \hat f_t(\tilde\vx^{h}_{t+1}),\tilde\vx^{h}_{t+1},\tilde S^{h}_{t+1},\tilde \vlambda^{h}_{t+1} )
        $ \Comment{Inner Corrective Step} \label{line: inner corrective step}
        \State $h \gets h+1$
    \EndWhile
    \State $ \vx^{\text{PVM}}_{t+1} \gets \tilde \vx^{h}_{t+1}, S^{\text{PVM}}_{t+1} \gets \tilde S^h_{t+1}, \vlambda^{\text{PVM}}_{t+1} \gets \tilde \vlambda^h_{t+1}$
    \If{ $f(\vx^{\text{PVM}}_{t+1}) \leq f(\vx_{t+1}^{\text{OCS}})$} \label{line: selecting pvm or outer step}
        \State $\vx_{t+1} \gets \vx^{\text{PVM}}_{t+1},
          S_{t+1} \gets  S^{\text{PVM}}_{t+1},
          \vlambda_{t+1} \gets \vlambda^{\text{PVM}}_{t+1}$
          \Comment{Choose Inexact PVM step}
    \Else
        \State $\vx_{t+1} \gets \vx_{t+1}^{\text{OCS}},
          S_{t+1} \gets S_{t+1}^{\text{OCS}},
          \vlambda_{t+1} \gets \vlambda_{t+1}^{\text{OCS}}$
          \Comment{Choose Outer Corrective Step}
    \EndIf
    \EndFor
    \end{algorithmic}
\end{algorithm}

\subsection{Convergence with generalized self-concordant functions}

We first recall the definition of generalized self-concordant functions as introduced in \citet[Definition 2]{sun_generalized_2019}.
Let $f: \R^n \to \R \cup \{+\infty\}$ be a convex function, with its
effective domain~$\mathrm{dom}(f) \defeq \{\vx \in \R^n| f(\vx)<+\infty\}$.
We assume that~$\mathrm{dom}(f)$ is an open set and that the function $f$ is three times continuously differentiable on $\mathrm{dom}(f)$ with third order derivative~$\nabla^3 f$.
The function~$f$ is a \emph{$(M,\nu)$-generalized self-concordant function of order~$\nu>0$ and constant~$M \geq 0$} if
\begin{equation}
    \left| 
        \innp{\nabla^3 f(\vx)_\vvv \vu,\vu} 
    \right| 
    \leq
    M \norm{\vu}_{\nabla^2 f(\vx)}^2 \norm{\vvv}_{\nabla^2 f(\vx)}^{\nu-2} \norm{\vvv}^{3-\nu}, \quad
    \forall \vx \in \mathrm{dom}(f), \forall \vu,\vvv \in \R^n.
    \label{eq: self-concordant functions}
\end{equation}
This bound on the third derivative can be used to derive inequalities akin to generalized smoothness and generalized strong convexity.
\begin{proposition}{\citep[Proposition 10]{sun_generalized_2019}}\label{proposition:key_inequalities}
Given an $\left(M, \nu\right)$-generalized self-concordant function $f$, then for $\nu\geq 2$, we have that:
\begin{align}
    & f(\vy) - f(\vx) - \innp{\nabla f(\vx), \vy-\vx} \leq  \omega_{\nu}(d_{\nu}(\vx,\vy)) \norm{\vy - \vx}_{\nabla^2 f(\vx)}^2, \label{eq:pseudosmoothness_inequality}\\
    & f(\vy) - f(\vx) - \innp{\nabla f(\vx), \vy-\vx} \geq  \omega_{\nu}(-d_{\nu}(\vx,\vy)) \norm{\vy - \vx}_{\nabla^2 f(\vx)}^2, \label{eq:pseudobound2}
\end{align}
where \eqref{eq:pseudosmoothness_inequality} holds if $d_{\nu}(\vx, \vy) < 1$ for $\nu > 2$, and we have that,
\begin{equation} \label{eq:distance_definition}
    d_{\nu}(\vx,\vy) \defeq \begin{cases}
    M \norm{\vy-\vx} & \mbox{if } \nu=2\\
    (\frac{\nu}{2} - 1) M \norm{\vy-\vx}^{3-\nu} \norm{\vy-\vx}_{\nabla^2 f(\vx)}^{\nu-2} & \mbox{if } \nu > 2,
    \end{cases}
\end{equation}
\noindent
where:
\begin{equation}\label{eq:omegaselfconc}
    \omega_\nu(\tau) \defeq \begin{cases}
    \frac{e^\tau - \tau - 1}{\tau^2} & \mbox{if } \nu = 2 \\
    \frac{-\tau - \text{ln}(1-\tau)}{\tau^2} & \mbox{if } \nu = 3 \\
    \frac{(1-\tau) \text{ln}(1-\tau) + \tau }{\tau^2} & \mbox{if } \nu = 4 \\
    \left(\frac{\nu-2}{4-\nu}\right) \,\,\frac{1}{\tau} \left[ \frac{\nu-2}{2(3-\nu)\tau} \left((1-\tau)^{\frac{2(3-\nu)}{2-\nu}} - 1 \right) -1 \right] & \mbox{otherwise.}
    \end{cases}
\end{equation}
\end{proposition}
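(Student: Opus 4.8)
The quantity to bound is the Bregman divergence $D_f(\vy,\vx) \defeq f(\vy) - f(\vx) - \innp{\nabla f(\vx), \vy - \vx}$, and the plan is to reduce the multivariate statement to a one-dimensional differential inequality along the segment $[\vx,\vy]$. Writing $\vd = \vy - \vx$ and $\vz_t = \vx + t\vd$ for $t \in [0,1]$, Taylor's theorem with integral remainder gives $D_f(\vy,\vx) = \int_0^1 (1-t)\, u(t)\, \dd t$, where $u(t) \defeq \norm{\vd}_{\nabla^2 f(\vz_t)}^2 = \innp{\nabla^2 f(\vz_t)\vd, \vd}$. Note that $u(0) = \norm{\vy - \vx}^2_{\nabla^2 f(\vx)}$ is exactly the factor appearing on the right-hand side of both \eqref{eq:pseudosmoothness_inequality} and \eqref{eq:pseudobound2}, so it suffices to control $u(t)$ by $u(0)$ and then integrate against the weight $(1-t)$.

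First I would differentiate $u$ and apply the self-concordance bound \eqref{eq: self-concordant functions} with $\vu = \vvv = \vd$ evaluated at $\vz_t$. Since $u'(t) = \innp{\nabla^3 f(\vz_t)[\vd]\vd, \vd}$ and $\norm{\vd}_{\nabla^2 f(\vz_t)}^{2} = u(t)$, this yields the autonomous differential inequality $|u'(t)| \leq M \norm{\vd}^{3-\nu} u(t)^{\nu/2}$. The next step is to integrate it. For $\nu = 2$ I would set $h(t) = \ln u(t)$, so that $|h'(t)| \leq M\norm{\vd}$ and hence $u(0) e^{-d_2 t} \leq u(t) \leq u(0) e^{d_2 t}$ with $d_2 = M\norm{\vd}$. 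For $\nu > 2$ I would instead set $g(t) = u(t)^{1 - \nu/2}$; a short computation shows $|g'(t)| \leq (\tfrac{\nu}{2}-1) M \norm{\vd}^{3-\nu}$, a constant, whence $|g(t) - g(0)|$ grows at most linearly in $t$. Solving back for $u(t)$ produces the two-sided envelope $u(0)(1 + d_\nu t)^{-2/(\nu-2)} \leq u(t) \leq u(0)(1 - d_\nu t)^{-2/(\nu-2)}$, with $d_\nu$ exactly as in \eqref{eq:distance_definition}; the upper envelope is where the hypothesis $d_\nu < 1$ enters, guaranteeing $1 - d_\nu t > 0$ on $[0,1]$ so that the bound stays finite.

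Finally, I would substitute each envelope into $D_f(\vy,\vx) = \int_0^1 (1-t) u(t)\, \dd t$ and evaluate the elementary integrals $\int_0^1 (1-t)(1 \mp d_\nu t)^{-2/(\nu-2)}\, \dd t$ (and $\int_0^1 (1-t) e^{\pm d_2 t}\, \dd t$ in the quadratic case). Matching these integrals against the closed forms in \eqref{eq:omegaselfconc} is what produces the factor $\omega_\nu(\pm d_\nu)$: for instance $\int_0^1 (1-t) e^{a t}\,\dd t = (e^a - a - 1)/a^2 = \omega_2(a)$, and for $\nu = 3$ the integral $\int_0^1 (1-t)(1-a t)^{-2}\,\dd t$ evaluates to $(-a - \ln(1-a))/a^2 = \omega_3(a)$, with the lower bounds following by flipping the sign of $a$.

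The main obstacle is this last step: the function $\omega_\nu$ is defined piecewise, and verifying that the single integral $\int_0^1 (1-t)(1 \mp d_\nu t)^{-2/(\nu-2)}\,\dd t$ reproduces each branch requires separate antiderivatives for $\nu = 2$ (exponential), $\nu = 3$ and $\nu = 4$ (logarithmic, arising from the exponents $-2$ and $-1$ respectively), and the generic fractional-exponent branch. After the decomposition $1 - t = \tfrac{1}{a}(1-a t) + \tfrac{a-1}{a}$, the resulting terms $(1-a t)^{-p}$ integrate to a power unless the exponent hits $-1$, at which point a logarithm appears; these degeneracies occur precisely at $\nu = 3$ and $\nu = 4$, so the case split of the computation mirrors exactly the case split in the definition of $\omega_\nu$. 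The computations are routine but delicate, and each degenerate case must be handled on its own.
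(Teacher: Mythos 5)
The paper itself does not prove this proposition; it is quoted verbatim, with proof deferred to the cited source \citep[Proposition 10]{sun_generalized_2019}, and that source's argument is essentially the one you give: bound $u(t)=\norm{\vy-\vx}^2_{\nabla^2 f(\vx+t(\vy-\vx))}$ via the differential inequality $\abs{u'(t)}\le M\norm{\vy-\vx}^{3-\nu}u(t)^{\nu/2}$ obtained from \eqref{eq: self-concordant functions}, integrate it to the two-sided envelopes (which is exactly where $d_\nu(\vx,\vy)<1$ is needed for the upper branch when $\nu>2$), and then integrate against the Taylor weight $(1-t)$, the resulting elementary integrals reproducing each branch of $\omega_\nu$. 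Your outline is correct, including the identification of the degenerate cases $\nu=3$ and $\nu=4$ with the exponents $-2$ and $-1$; the only detail left implicit is that passing to $\ln u$ or $u^{1-\nu/2}$ requires $u>0$, which is handled by noting that the differential inequality forces $u\equiv 0$ on the whole segment (making both bounds trivial) if $u$ vanishes anywhere, or by an $\varepsilon$-perturbation of $u$.
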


\begin{lemma}{\citep[Lemma 9]{Karimireddy2018AdaptiveBO}}\label{lemma:pvmscaling}
Given a convex set $\mathcal{X}$ and $\mH \in \mathbb{S}_{++}^n$, and two scalars $\alpha > 0$, $\beta > 0$ such that $\alpha \beta \geq 1$,
we have that:
\begin{align*}
\min_{\vx \in \Xset} \innp{\nabla f(\vx_k), \vx - \vx_k} + \frac{\alpha}{2} \norm{\vx - \vx_k}^2_{\mH} \leq \frac{1}{\alpha \beta} \min_{\vx \in \Xset} \innp{\nabla f(\vx_k), \vx - \vx_k} + \frac{1}{2\beta} \norm{\vx-\vx_k}_\mH^2.
\end{align*}
\end{lemma}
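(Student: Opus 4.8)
The plan is to exploit the homogeneity of the two pieces of the objective along rays emanating from $\vx_k$, combined with the hypothesis $\alpha\beta \geq 1$, which is exactly what guarantees that a suitable shrinkage factor lands in $[0,1]$. Write $g(\vx) \defeq \innp{\nabla f(\vx_k), \vx - \vx_k}$ for the linear part and $q(\vx) \defeq \norm{\vx - \vx_k}_\mH^2$ for the quadratic part, both of which vanish at $\vx_k$. The key observation is that along the segment $\vx(\eta) \defeq (1-\eta)\vx_k + \eta\vx$ from $\vx_k$ towards any $\vx \in \Xset$, the displacement is $\vx(\eta) - \vx_k = \eta(\vx - \vx_k)$, so the linear term scales as $g(\vx(\eta)) = \eta\, g(\vx)$ while the quadratic term scales as $q(\vx(\eta)) = \eta^2\, q(\vx)$.

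First I would let $\vz \in \argmin_{\vx \in \Xset}\, g(\vx) + \frac{1}{2\beta} q(\vx)$ be a minimizer of the right-hand subproblem, which exists since $\Xset$ is compact and the objective continuous. Because $\vx_k, \vz \in \Xset$ and $\Xset$ is convex, the point $\vz(\eta) = (1-\eta)\vx_k + \eta\vz$ is feasible for every $\eta \in [0,1]$. I would then pick the shrinkage factor $\eta = \frac{1}{\alpha\beta}$, which lies in $(0,1]$ precisely because $\alpha\beta \geq 1$; this is the only place where the hypothesis enters.

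Evaluating the left-hand objective at this feasible point and using the two scaling relations then gives
\begin{align*}
    \min_{\vx \in \Xset}\, g(\vx) + \frac{\alpha}{2} q(\vx)
    &\leq g(\vz(\eta)) + \frac{\alpha}{2} q(\vz(\eta)) \\
    &= \eta\, g(\vz) + \frac{\alpha}{2}\eta^2\, q(\vz) \\
    &= \frac{1}{\alpha\beta}\, g(\vz) + \frac{1}{2\alpha\beta^2}\, q(\vz) \\
    &= \frac{1}{\alpha\beta}\left( g(\vz) + \frac{1}{2\beta}\, q(\vz) \right),
\end{align*}
and the bracket is the optimal value of the right-hand subproblem by the choice of $\vz$. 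Substituting back the definitions of $g$ and $q$ yields the claimed inequality.

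The argument is short, and I expect no genuinely hard step: the whole content is recognising the correct interpolation point and the shrinkage factor $\eta = 1/(\alpha\beta)$, after which it is pure bookkeeping. The one assumption worth flagging is $\vx_k \in \Xset$, needed so that the segment $\vz(\eta)$ stays feasible; this holds in the SOCGS setting where $\vx_k$ is a feasible iterate of the polytope $\cx$.
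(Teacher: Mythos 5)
Your proof is correct, and it is the standard rescaling argument for this statement (substitute $\vx \mapsto \vx_k + \tfrac{1}{\alpha\beta}(\vx - \vx_k)$ into the left-hand subproblem); note that the paper does not prove this lemma itself but imports it verbatim from \citet{Karimireddy2018AdaptiveBO}, so there is no in-paper proof to compare against. The one hypothesis you flag, $\vx_k \in \Xset$, is indeed needed and is satisfied where the lemma is invoked in the PVM analysis, since the minimization there is over $(1-\gamma_k)\vx_k + \gamma_k\cx$, which contains $\vx_k$.
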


\begin{lemma}{\citep[Lemma A.6]{carderera2020second}}\label{lemma:scalingCondition}
Given two matrices $\mP, \mQ \in \mathcal{S}^n_{++}$, then we have for all $\mathbf{v} \in \R^n$:
\begin{align}
\frac{1}{\eta} \norm{\mathbf{v}}^2_\mP  \leq \norm{\mathbf{v}}^2_\mQ \leq \eta \norm{\mathbf{v}}^2_\mP, \label{Eq:scalingEq}
\end{align}
with $\eta = \max \left\{\lambda_{\max} \left( \mP^{-1} \mQ\right), \lambda_{\max} \left( \mQ^{-1} \mP\right) \right\} \geq 1$.
\end{lemma}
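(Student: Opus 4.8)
The plan is to reduce the generalized Rayleigh quotient $\norm{\vvv}^2_\mQ / \norm{\vvv}^2_\mP$ to an ordinary one through simultaneous diagonalization, then read off the extreme values as generalized eigenvalues. Recall from the footnote that $\norm{\vvv}^2_\mP = \innp{\vvv, \mP \vvv}$, so for nonzero $\vvv$ the claim is equivalent to two-sided bounds on the ratio $\innp{\vvv, \mQ\vvv}/\innp{\vvv, \mP\vvv}$ (the case $\vvv = 0$ being trivial).

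First I would set $\vw = \mP^{1/2}\vvv$, which is legitimate since $\mP \in \mathcal{S}^n_{++}$ admits a symmetric positive definite square root. This turns the denominator into $\innp{\vvv, \mP\vvv} = \norm{\vw}^2$ and the numerator into $\innp{\vvv, \mQ\vvv} = \innp{\vw, \mP^{-1/2}\mQ\mP^{-1/2}\vw}$. The matrix $\tilde\mQ \defeq \mP^{-1/2}\mQ\mP^{-1/2}$ is symmetric and, being congruent to $\mQ$, positive definite; hence the ratio equals the standard Rayleigh quotient $\innp{\vw, \tilde\mQ\vw}/\norm{\vw}^2$, which is sandwiched between $\lambda_{\min}(\tilde\mQ)$ and $\lambda_{\max}(\tilde\mQ)$. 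Since $\tilde\mQ$ is similar to $\mP^{-1}\mQ$ (conjugate by $\mP^{1/2}$), the two share the same spectrum, so $\lambda_{\min}(\mP^{-1}\mQ)\,\norm{\vvv}^2_\mP \leq \norm{\vvv}^2_\mQ \leq \lambda_{\max}(\mP^{-1}\mQ)\,\norm{\vvv}^2_\mP$. The upper bound in \eqref{Eq:scalingEq} then follows from $\lambda_{\max}(\mP^{-1}\mQ) \leq \eta$.

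For the lower bound I would exchange the roles of $\mP$ and $\mQ$ in the identical argument, obtaining $\norm{\vvv}^2_\mP \leq \lambda_{\max}(\mQ^{-1}\mP)\,\norm{\vvv}^2_\mQ \leq \eta\,\norm{\vvv}^2_\mQ$, i.e. $\tfrac{1}{\eta}\norm{\vvv}^2_\mP \leq \norm{\vvv}^2_\mQ$, and combining the two inequalities gives \eqref{Eq:scalingEq}. Finally, to confirm $\eta \geq 1$, I would use that the eigenvalues of $\mQ^{-1}\mP$ are the reciprocals of those of $\mP^{-1}\mQ$, whence $\lambda_{\max}(\mP^{-1}\mQ)\,\lambda_{\max}(\mQ^{-1}\mP) = \lambda_{\max}(\mP^{-1}\mQ)/\lambda_{\min}(\mP^{-1}\mQ) \geq 1$; since the product of the two candidates is at least one, their maximum $\eta$ is at least one. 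The only subtlety, hardly an obstacle, is justifying that the non-symmetric product $\mP^{-1}\mQ$ has real positive eigenvalues, which is settled by its similarity to the symmetric positive definite $\tilde\mQ$.
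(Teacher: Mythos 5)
Your proof is correct and complete. The paper does not prove this statement at all --- it imports it verbatim as Lemma A.6 of \citet{carderera2020second} --- so there is no in-paper argument to compare against; your route via the congruence $\tilde\mQ = \mP^{-1/2}\mQ\mP^{-1/2}$, the Rayleigh-quotient bounds, the similarity of $\tilde\mQ$ to $\mP^{-1}\mQ$, and the reciprocal-spectrum observation for $\eta \geq 1$ is the standard and fully rigorous way to establish it.
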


We now present the PVM algorithm in Algorithm~\ref{alg: pvm} from \citet{carderera2020second} and its convergence for generalized self-concordant functions in Theorem~\ref{th: global convergence pvm self-concordant functions}.

\begin{algorithm}[hbtp]
    \caption{Projected Variable-Metric (PVM) algorithm}\label{alg: pvm}
    \begin{algorithmic}[1]
    \Require Point $\vx_0 \in \cx$, step sizes $\{\gamma_0,\dots,\gamma_K\}$
    \Ensure Point $\vx_K \in \cx$
    \For{$k=0$ to $K-1$}
        \State $\hat \vx \gets \argmin_{\vx \in \cx} \left( f(\vx_k) + \innp{\nabla f(\vx_k), \vx - \vx_k} + \frac{1}{2} \norm{\vx - \vx_k}^2_{\mH_k}\right)$
    \State $\vx_{k+1} \gets \vx_k + \gamma_k (\hat \vx - \vx_k)$
    \EndFor
    \end{algorithmic}
\end{algorithm}

\begin{theorem}[Global convergence of the Projected Variable-Metric algorithm on generalized self-concordant functions.]
\label{th: global convergence pvm self-concordant functions}
Let $\mathcal{X}$ be a compact convex set of diameter $D$ and $f$ be a $(M,\nu)$-generalized self-concordant function with $\nu \geq 2$ such that
$f$ is strongly convex on $\mathrm{dom}(f) \cap \Xset{}$ if $\nu = 3$.
Given a starting point $\vx_0 \in \Xset{} \cap \mathrm{dom}(f)$,
the Projected Variable-Metric algorithm (Algorithm~\ref{alg: pvm}) with a step size $\gamma_k$ guarantees for all $k \geq 0$:
\begin{align*}
f(\vx_{k+1}) - f(\vx^*) \leq \left( 1 - \frac{\omega_\nu(1/2)\gamma_k^2}{\eta_k}\right) \left(f(\vx_{k}) - f(\vx^*) \right),
\end{align*}
where the parameter $\eta_k$ measures how well $\mH_k$ approximates $\nabla^2 f(\vx_k)$ in the sense of \cref{lemma:scalingCondition},
and $\gamma_k$ is such that
\begin{align}
     \gamma_k & \leq \min\left\{\frac{1}{2\eta_k}, \eta_k \right\} \frac{1}{\omega_\nu(\frac12)}\label{eq:newgamma2}
\end{align}
and additionally,
\begin{align}
    \gamma_k & \leq \frac{1}{2\eta_k \omega_{\nu}(MD)} \; \text{if } \nu = 2 \label{eq:newnu2bound}\\
    \gamma_k & \leq \Gamma,\; \text{if } \nu > 2 \label{eq:ineqgamma2}
\end{align}
where $\Gamma$ is the maximum value, such that:
\begin{align}
& \norm{\bar\vx-\vx_k}_{\mH_k} \leq \frac{\mu_0^{3-\nu}}{\eta_k M (2\nu - 1)}\label{eq:mingammacondition} \\
\text{where } & \bar\vx = \argmin_{\vx} \innp{\nabla f(\vx_k), \vx - \vx_k} + \frac{1}{2\Gamma} \norm{\vx-\vx_k}_{\mH_k}^2,\nonumber \\
\text{with } & \mu_0 = \begin{cases}\nonumber
    1 & \text{ if } \nu = 3 \\
    \min\limits_{\substack{\vd \in \R^n, \norm{\vd}_2 = 1\\ \vx \in \Xset{}, f(\vx) \leq f(\vx_0)}} \norm{\vd} _{\nabla^2 f(\vx)} & \text{ otherwise.}
\end{cases}
\end{align}

\begin{proof}
The iterate $\vx_{k+1}$ can be rewritten as:
\begin{align}
    \vx_{k+1} = \argmin\limits_{\vx \in (1 - \gamma_k)\vx_k + \gamma_k \mathcal{X}} \innp{\nabla f(\vx_k), \vx - \vx_k} + \frac{1}{2\gamma_k}\norm{\vx - \vx_k}^2_{\mH_k}. \label{eq:DefinitionOptStepSize}
\end{align}
Using $(M,\nu)$-generalized self-concordance of $f$ and \cref{proposition:key_inequalities}, we can write:
\begin{align}
f(\vx_{k+1}) - f(\vx_k) &\leq \innp{\nabla f(\vx_k), \vx_{k+1} - \vx_k} + \omega_{\nu}(d_{\nu}(\vx_{k},\vx_{k+1})) \norm{\vx_{k+1} - \vx_k}^2_{\nabla^2 f(\vx_k)} \\
&\leq \innp{\nabla f(\vx_k), \vx_{k+1} - \vx_k} + \eta_k \omega_{\nu}(d_{\nu}(\vx_{k},\vx_{k+1})) \norm{\vx_{k+1} - \vx_k}^2_{\mH_k}\label{eq:secondineq},
\end{align}
where \eqref{eq:secondineq} follows from the $\eta_k$ approximation of the Hessian by $\mH_k$.
If $\nu=2$, we have from \eqref{eq:newnu2bound} that
\begin{align*}
    \gamma_k \leq \frac{1}{2\eta_k \omega_\nu(MD)} \leq \frac{1}{2 \eta_k\omega_\nu(d_\nu(\vx_k,\vx_{k+1}))}.
\end{align*}
Note that we use the fact that $\omega_\nu(a) \leq \omega_\nu(1/2)$ hold for all $a \leq 1/2$, which we will also use for other $\nu$ values.
If $\nu>2$, using the upper bound assumption on $\gamma_k$ \eqref{eq:ineqgamma2} and the associated upper bound on the $\mH_k$-norm \eqref{eq:mingammacondition}, we can ensure that:
\begin{align*}
    d_{\nu}(\vx_{k+1},\vx_k) &= \left(\frac{\nu}{2} - 1\right) M \norm{\vx_{k+1} - \vx_k}_2^{3-\nu} \norm{\vx_{k+1} - \vx_k}_{\nabla f(\vx_k)}^{\nu-2} \\
    & \leq \left(\frac{\nu}{2} - 1\right) M \frac{1}{\mu_0^{3-\nu}} \norm{\vx_{k+1} - \vx_k}_{\nabla f(\vx_k)} \\
    & \leq \left(\frac{\nu}{2} - 1\right) M \frac{\eta_k}{\mu_0^{3-\nu}} \norm{\vx_{k+1} - \vx_k}_{\mH_k} \leq \frac12, \\
\end{align*}
where the last inequality uses the maximum distance in local norm between $\vx_k$ and $\vx_{k+1}$ as a solution to the subproblem.
Note that $\Gamma > 0$ can be ensured, among other means, by the fact that $\nabla f(\vx_k)$ is bounded on any finite sublevel set even if the function does not possess a global finite Lipschitz smoothness constant.
These two cases ensure that $\frac{1}{2\gamma_k} \geq \eta_k \omega_{\nu}(d_\nu(\vx_{k}, \vx_{k+1}))$.
Continuing the chain of inequalities:
\begin{align}
f(\vx_{k+1}) - f(\vx_k) &\leq \innp{\nabla f(\vx_k), \vx_{k+1} - \vx_k} + \eta_k \omega_{\nu}(d_{\nu}(\vx_{k},\vx_{k+1})) \norm{\vx_{k+1} - \vx_k}^2_{\mH_k}\\
&\leq \innp{\nabla f(\vx_k), \vx_{k+1} - \vx_k} + \frac{1}{2\gamma_k}\norm{\vx_{k+1} - \vx_k}^2_{\mH_k} \label{eq:thirdineq} \\
&= \min\limits_{\vx \in (1 - \gamma_k)\vx_k + \gamma_k \mathcal{X}}\left( \innp{\nabla f(\vx_k), \vx - \vx_k} + \frac{1}{2\gamma_k}\norm{\vx - \vx_k}^2_{\mH_k} \right) \label{eq:lasttemporaryineq},
\end{align}
where 
\eqref{eq:thirdineq} follows from the upper bound on $\gamma_k$ from \eqref{eq:newgamma2}.
\eqref{eq:lasttemporaryineq} directly follows from the definition of $\vx_{k+1}$ in \eqref{eq:DefinitionOptStepSize}.
\begin{align}
    &f(\vx_{k+1}) - f(\vx_k)\nonumber\\
    &\leq\min\limits_{\vx \in (1 - \gamma_k)\vx_k + \gamma_k \mathcal{X}}\left( \innp{\nabla f(\vx_k), \vx - \vx_k} + \frac{1}{2\gamma_k}\norm{\vx - \vx_k}^2_{\mH_k} \right)  \\
    & \leq \frac{\omega_\nu(-d_\nu( \vx_k, \vx^*)) \gamma_k}{\eta_k}\!\!\!\!\!\!\min\limits_{\vx \in (1 - \gamma_k)\vx_k + \gamma_k \mathcal{X}}\left( \innp{\nabla f(\vx_k), \vx - \vx_k} + \frac{\omega_\nu(-d_\nu( \vx_k, \vx^*))}{2\eta_k}\norm{\vx - \vx_k}^2_{\mH_k}\right) \label{eq:plugging_lemma} \\
    &\leq\frac{\omega_\nu(-d_\nu( \vx_k, \vx^*)) \gamma_k}{\eta_k}\!\!\!\!\!\!\min\limits_{\vx \in (1 - \gamma_k)\vx_k + \gamma_k \mathcal{X}}\left(\innp{\nabla f(\vx_k), \vx - \vx_k} + \omega_\nu(-d_\nu( \vx_k, \vx^*))\norm{\vx - \vx_k}^2_{\nabla^2 f(\vx_k)}\right) \label{eq:before_plugging_optimum}\\
    &\leq\frac{\omega_\nu(-d_\nu( \vx_k, \vx^*)) \gamma_k^2}{\eta_k} \left( \innp{\nabla f(\vx_k), \vx^* - \vx_k} + \omega_\nu(-d_\nu( \vx_k, \vx^*))\gamma_k \norm{\vx^*- \vx_k}^2_{\nabla^2 f(\vx_k)} \right) \label{eq:plugging_optimum}\\
    &\leq\frac{\omega_\nu(-d_\nu( \vx_k, \vx^*)) \gamma_k^2}{\eta_k} \left( \innp{\nabla f(\vx_k), \vx^* - \vx_k} + \omega_\nu(-d_\nu( \vx_k, \vx^*)) \norm{\vx^*- \vx_k}^2_{\nabla^2 f(\vx_k)} \right) \label{eq:particularize_stepsize} \\
    & \leq \frac{\omega_\nu(1/2) \gamma_k^2}{\eta_k} \left( f(\vx^*) - f(\vx_k) \right). \label{eq:strong_convexity_deriv}
\end{align}
We obtain \eqref{eq:plugging_lemma} by applying \cref{lemma:pvmscaling} with $\alpha=1/\gamma_k$ and $\beta=\eta_k / \omega_\nu(-d_\nu( \vx_k, \vx^*))$.
Note that the lemma requirements impose $\gamma_k \leq \eta_k / \omega_\nu(-d_\nu( \vx_k, \vx^*))$,
which is ensured by \eqref{eq:newgamma2} by monotonicity of $\omega_\nu$.
\eqref{eq:before_plugging_optimum} follows from the Hessian-induced norm approximation with $\mH_k$, i.e., $1/ \eta_k\norm{\vx - \vx_k}^2_{\mH_k} \leq \norm{\vx - \vx_k}^2_{\nabla^2 f(\vx_k)}$ following \cref{lemma:scalingCondition}.
\eqref{eq:plugging_optimum} follows from setting in $\vx =(1 - \gamma_k) \vx_k + \gamma_k \vx^*$ into \eqref{eq:before_plugging_optimum} (since $\vx^* \in \mathcal{X}$).
We obtain \eqref{eq:particularize_stepsize} by considering that $\gamma_k \leq 1$, since $\vx_{k+1}$ is constructed as a convex combination of $\hat \vx$ and $\vx_k$.
Finally, \eqref{eq:strong_convexity_deriv} follows from \eqref{eq:pseudobound2} and from the fact that $\omega_{\nu}(a) \leq \omega_\nu(1/2)$ hold for all $a \leq 1/2$.
We can finally rewrite the expression as:
\begin{align*}
f(\vx_{k+1}) - f(\vx^*) \leq \left( 1 - \frac{\omega_\nu(1/2)\gamma_k^2}{\eta_k}\right) \left(f(\vx_{k}) - f(\vx^*) \right).
\end{align*}
If $\eta_k$ remains bounded above and below across iterations, a fixed step size respecting the hypotheses from the initial statement achieves linear convergence.
A step size $\gamma_k$ obtained through line search will achieve more progress per iteration than a fixed step size respecting the provided bounds and hence also achieves linear convergence.
\end{proof}
\end{theorem}

\begin{corollary}
The SOCGS algorithm applied to a generalized self-concordant objective $f$ of parameters $(M,\nu)$ with $\nu \geq 2$ on a polytope \Xset achieves linear convergence when performing Blended Pairwise Conditional Gradients or Away-Step Frank-Wolfe inner steps for the subproblems if $f$ is strongly convex or if it is the composition of a log-homogeneous barrier with an affine map.
\end{corollary}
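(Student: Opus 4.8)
The plan is to combine three facts already in hand: the step-selection rule of SOCGS, the global linear rate for corrective Frank-Wolfe (\cref{theorem: CFW linear convergence}), and the per-step contraction of the Projected Variable-Metric iteration (\cref{th: global convergence pvm self-concordant functions}). The entry point is line~\ref{line: selecting pvm or outer step} of \cref{alg: socgs}, which keeps whichever of $\vx^{\text{PVM}}_{t+1}$ and $\vx^{\text{OCS}}_{t+1}$ has the smaller objective, so $f(\vx_{t+1}) \leq f(\vx^{\text{OCS}}_{t+1})$. Hence the SOCGS objective values are dominated by those of the outer-corrective-step sequence, and it suffices to exhibit a linear rate for the latter; the inexact PVM steps can then only improve the rate. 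Since BPCG and AFW steps satisfy the requirements of a corrective step (\cref{prop:corrective_step} together with the following remark), the outer steps form a CFW sequence, and \cref{theorem: CFW linear convergence} delivers a global linear rate as soon as $f$ is $L$-smooth and $(c,\tfrac12)$-sharp on the region the iterates visit.

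Next I would produce these two constants on the relevant region. Because the outer steps are monotone and the min-selection preserves this, every iterate stays in the compact sublevel set $\mathcal{L} = \{\vx \in \Xset : f(\vx) \leq f(\vx_0)\}$, so smoothness and sharpness are only needed on $\mathcal{L}$. In the strongly convex case, $(c,\tfrac12)$-sharpness follows from the preliminaries with $c = \sqrt{2/\mu}$, while continuity of $\nabla^2 f$ on the compact set $\mathcal{L} \subset \dom(f)$ gives a finite $L = \max_{\vx \in \mathcal{L}} \lambda_{\max}(\nabla^2 f(\vx))$. In the log-homogeneous-barrier case, $f$ is $(M,3)$-generalized self-concordant and blows up on $\partial\dom(f)$, so $\mathcal{L}$ is a compact subset of the open domain; consequently $\nabla^2 f$ is bounded above and below on $\mathcal{L}$, yielding effective smoothness and strong-convexity (hence sharpness) constants, the latter relying on the strong-convexity hypothesis on $\dom(f)\cap\Xset$ that \cref{th: global convergence pvm self-concordant functions} requires for $\nu=3$.

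Then I would record why the inexact PVM steps are well-defined and beneficial, which is exactly where the choice of BPCG or AFW inner steps enters. Each inner subproblem minimizes the quadratic $\hat f_t(\vx) = \innp{\nabla f(\vx_t), \vx - \vx_t} + \tfrac12 \norm{\vx - \vx_t}^2_{\mH_t}$, which is strongly convex over the polytope because $\mH_t \succ 0$ approximates the positive-definite $\nabla^2 f(\vx_t)$ on $\mathcal{L}$. BPCG and AFW therefore converge linearly on $\hat f_t$ and reach the Frank-Wolfe-gap threshold $\varepsilon_t$ of line~\ref{line: threshold computation} in finitely many inner iterations, so the while-loop at line~\ref{line: pvm step} terminates. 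With the inner solve completed to this precision and the Hessian-approximation ratio $\eta_t$ of \cref{lemma:scalingCondition} bounded on $\mathcal{L}$, \cref{th: global convergence pvm self-concordant functions} supplies the faster local contraction that the min-selection inherits, so combining it with the global rate from the outer steps yields linear convergence.

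The main obstacle will be the log-homogeneous-barrier case, where $f$ has neither a global Lipschitz-smoothness constant nor global strong convexity: the whole argument hinges on confining the iterates to a compact sublevel set strictly interior to $\dom(f)$ and on checking that the self-concordance quantities controlling the PVM step, in particular $\mu_0$ and the ratio $\eta_t$, stay uniformly bounded there. The delicate points are verifying that the descent property is genuinely preserved under the min-selection so that the iterates never approach the barrier boundary, and that composition with the affine map does not destroy these uniform bounds.
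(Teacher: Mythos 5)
Your high-level skeleton matches the paper's: the min-selection at line~\ref{line: selecting pvm or outer step} makes the SOCGS primal gap dominated by that of the independently maintained outer AFW/BPCG sequence, so linear convergence of SOCGS reduces to linear convergence of that outer sequence, with the inexact PVM steps only helping. Where you diverge is in how the outer linear rate is obtained. The paper does not re-derive it: it cites external results --- \citet{CBP2021jour} for AFW/BPCG on strongly convex generalized self-concordant functions, and \citet{zhao2025new} (AFW) together with \citet{hendrych2023solvingpreprint} (BPCG) for compositions of log-homogeneous barriers with affine maps --- and then combines these with \cref{th: global convergence pvm self-concordant functions} for the PVM half. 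You instead try to bootstrap the outer rate from \cref{theorem: CFW linear convergence} by producing local smoothness and sharpness constants on the sublevel set $\mathcal{L}$.

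That substitution breaks down precisely in the second case of the corollary. For $f = F \circ A$ with $F$ a log-homogeneous barrier and $A$ affine, the Hessian $\nabla^2 f(\vx) = A^\top \nabla^2 F(A\vx + \vb)\, A$ is singular whenever $A$ has a nontrivial kernel, so $\nabla^2 f$ is \emph{not} bounded below by a positive multiple of the identity on $\mathcal{L}$, and no strong-convexity (hence no immediate $(c,\tfrac{1}{2})$-sharpness) constant is available there. Your own phrasing --- that the lower bound relies ``on the strong-convexity hypothesis'' --- makes the barrier case collapse into the strongly convex case and therefore leaves the second alternative of the statement, which is stated exactly because $f$ need not be strongly convex there, unproved; this is the case the paper outsources to \citet{zhao2025new} and \citet{hendrych2023solvingpreprint}, which obtain the rate by barrier-specific arguments rather than via uniform Hessian bounds. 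A secondary gap, present in both cases, is that \cref{theorem: CFW linear convergence} applies the $L$-smoothness descent inequality along segments from $\vx_t$ toward polytope vertices, and such segments generally leave $\mathcal{L}$ (for a barrier they may even leave $\dom(f)$, where $f = +\infty$); confining the iterates to $\mathcal{L}$ is therefore not enough on its own, and the standard truncation-at-the-sublevel-set argument needed to rescue the short-step progress bound is missing from your write-up.
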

Linear convergence of BPCG and AFW on strongly convex generalized self-concordant functions was established in \citet{CBP2021jour}
while the case of the composition of a log-homogeneous barrier (a special case of self-concordant functions) was tackled in \citet{zhao2025new} for AFW and extended to BPCG in \citet{hendrych2023solving}.
Linear convergence of SOCGS itself follows from observing that the algorithm selects the best primal progress between the FW variant and the Projected Variable-Metric step, both of which provide linear convergence.
Finally, we highlight that this result also applies to CGS by using, e.g., AFW or BPCG algorithms for the projection subproblems and starting from a first point in $\mathrm{dom}(f)$.
Since the algorithm provides primal progress at each iteration, convergence follows from $\eta_k = \max\{L_0,1/\mu_0\}$ where $L_0,\mu_0$
are \emph{local} smoothness and strong convexity parameters computed on the sublevel set of $f(\vx_0)$, the value of the initial point.

\section{Experiment details and additional computational results}
\label{section: additional_experiments}

In this section, we provide additional details on the experiments presented in \cref{section: experiments} as well as present additional experiments.
For all problems, we use hybrid methods combining QC-MNP or QC-LP with local pairwise steps of the BPCG algorithm. The steps are combined as follows.
We use LCFW with $J = 2$ and local pairwise steps as the default corrective step. Additionally, if a given number of atoms $N$ is added to the active set, a single QC step is performed and the counter is reset afterwards.
This hybrid approach yields a good trade-off between the computational cost of solving the linear system or LP, respectively, and the gained acceleration by the QC methods.
We consider hybrid approaches with other FW methods in \cref{subsection: performance profile and success rates}.
As baselines, we use lazified versions of FW, AFW, PFW, and BPCG.
All methods use the secant line search strategy from \citet{hendrych2025secant}, yielding sufficient progress.
For the actual implementation, we have used the \verb|FrankWolfe.jl| package \citep{besanccon2021frankwolfe}.
Furthermore, we use the implementation and setup of \citet{liu2025toolbox} for the entanglement detection problem.

\subsection{$K$-Sparse regression}
In the first experiment, we consider a sparse regression problem over the $K$-Sparse polytope, i.e., the intersection of an $\ell_1$-norm ball and an $\ell_\infty$-norm ball, $P_K(\tau) = B_1(\tau K) \cap B_\infty(\tau)$.
The vertices of the polytope are given by the vectors with entries in $\{-\tau,0,\tau\}$ with at most $K$ non-zeros. We consider a classical linear regression problem, i.e., solving
$$
\min_{\vx \in P_K(\tau)} f(\vx) = \sum_{i=1}^m (\innp{\vx, \va_i} - y_i)^2 = \|\mathbf{A}\vx - \vy\|_2^2
$$
given data points $\{(\va_i, y_i)\}_{i=1}^m \subset \R^n \times \R$.
We used synthetic data for the experiment and generated normally distributed $\va_i$ and $y_i$ with $n = 500$ and $m = 10000$.
In \cref{section: experiments} we already presented the results for the case $K \in \{5,20\}$ and $\tau=1$.
Here we provide some more insights into the advantage of the QC methods by analyzing the size of the active set for the more extreme cases $K \in \{3, 30\}$. 
Again, we use a fixed interval length of $N = 10$ for the quadratic correction steps.
The results are shown in \cref{fig:ksparse_regression_active_set}.
For smaller $K$, the necessary active set size for reaching optimality is larger.
Both QC methods benefit from avoiding running many pairwise steps like BPCG.
Therefore, these methods perform earlier FW steps and add more atoms to the active set during earlier iterations.
Consequently, both QC methods accelerate the convergence of the primal values and the FW gap.
\begin{figure}
    \begin{subfigure}{0.47\textwidth}
        \includegraphics[width=0.99\textwidth]{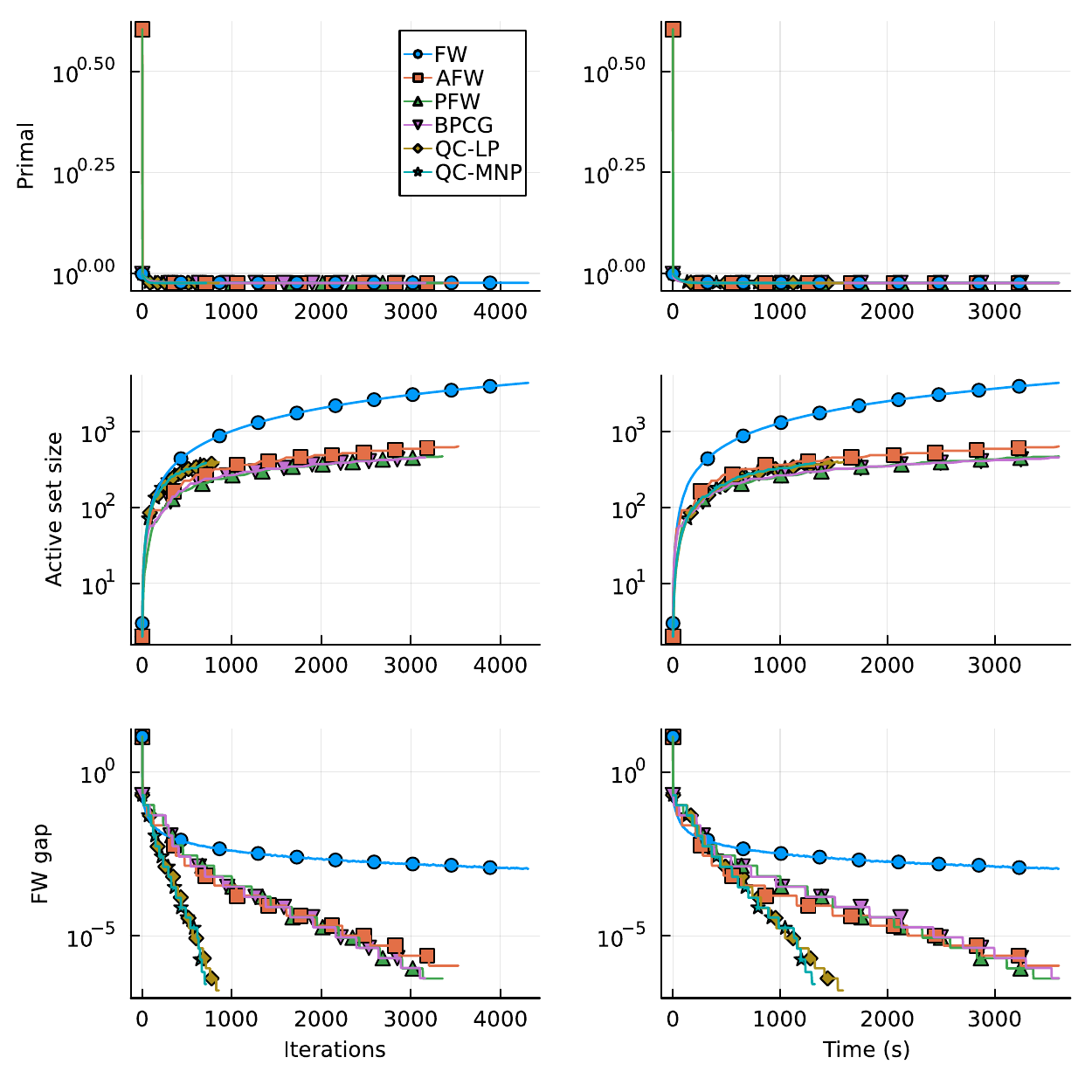}
    \end{subfigure}
    \hfill
    \begin{subfigure}{0.47\textwidth}
        \includegraphics[width=0.99\textwidth]{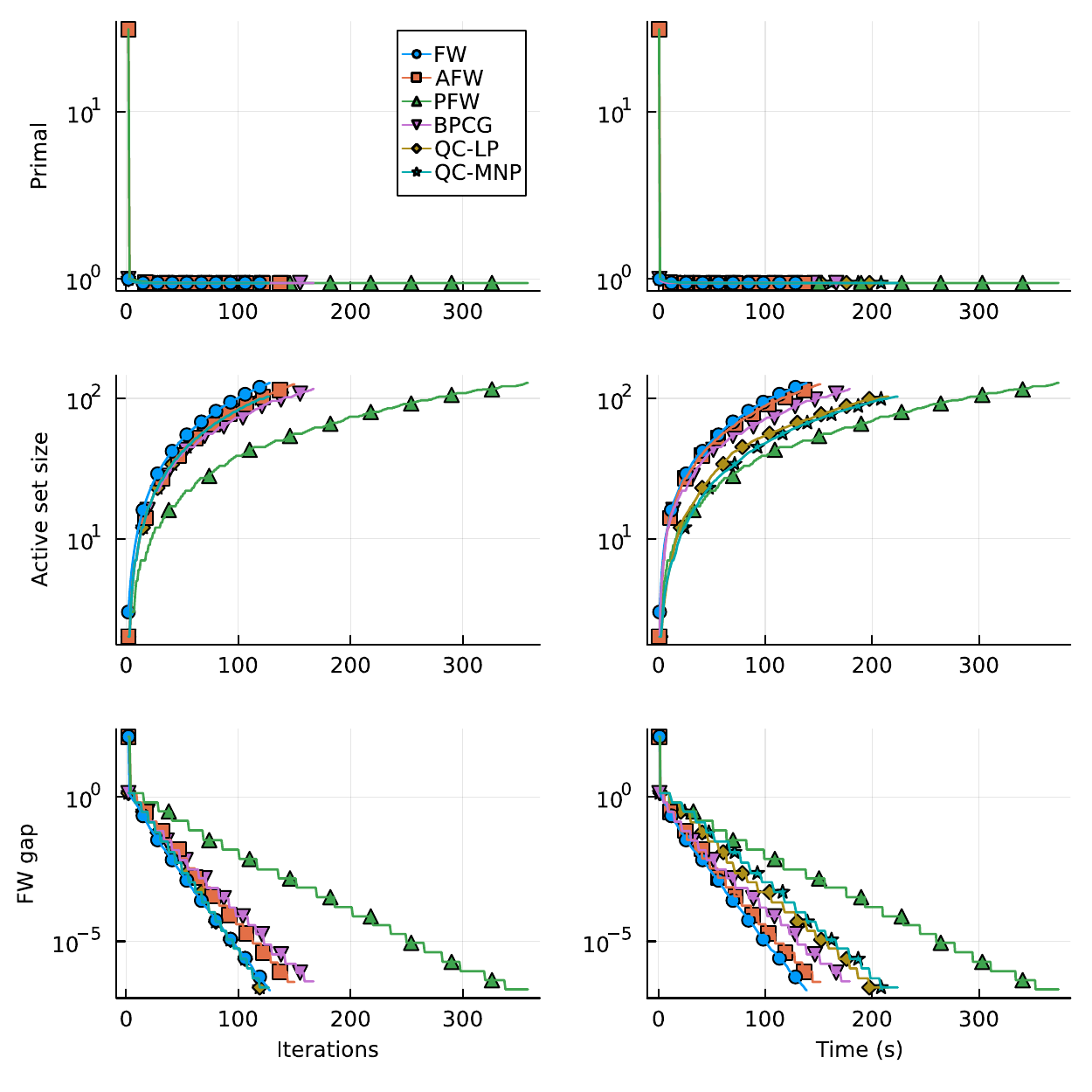}
    \end{subfigure}
    \caption{Sparse regression with $K \in \{3, 30\}$ and $\tau=1$}
    \label{fig:ksparse_regression_active_set}
\end{figure}

\subsection{Entanglement detection}
In the second experiment, we consider bipartite entanglement detection.
Solving this problem is equivalent to projecting a given state onto the set of separable states,
$$
    \mathcal{S}_{AB} = \conv \{\vrho_A \otimes \vrho_B : \vrho_A \in \mathbb{D}(A), \vrho_B \in \mathbb{D}(B)\},
$$
where $\otimes$ denotes the tensor product and where $\mathbb{D}(A)$ and $\mathbb{D}(B)$ are the sets of density matrices on systems $A$ and $B$ respectively, i.e., (hermitian) positive semidefinite matrices with unit trace.
Consequently, the projection problem can be written as
$$
    \min_{\vrho \in \mathcal{S}_{AB}} \|\vrho - \vrho_0\|_F^2,
$$
where $\|\cdot\|_F$ is the Frobenius norm and $\vrho_0$ is the given state.
In our experiments, we consider a family of bipartite $3 \times 3$ entangled states proposed in \citet{Hor97}.
Given a parameter $a\in\mathopen[0,1\mathclose]$, these states are defined by
\begin{equation}\label{eqn:horodecki}
  \vrho^a_{H} = \frac{1}{8a+1} \begin{pmatrix}
    a & 0 & 0 & 0 & a & 0 & 0 & 0 & a \\
    0 & a & 0 & 0 & 0 & 0 & 0 & 0 & 0 \\
    0 & 0 & a & 0 & 0 & 0 & 0 & 0 & 0 \\
    0 & 0 & 0 & a & 0 & 0 & 0 & 0 & 0 \\
    a & 0 & 0 & 0 & a & 0 & 0 & 0 & a \\
    0 & 0 & 0 & 0 & 0 & a & 0 & 0 & 0 \\
    0 & 0 & 0 & 0 & 0 & 0 & \frac{1+a}{2} & 0 & \frac{\sqrt{1-a^2}}{2} \\
    0 & 0 & 0 & 0 & 0 & 0 & 0 & a & 0 \\
    a & 0 & 0 & 0 & a & 0 & \frac{\sqrt{1-a^2}}{2} & 0 & \frac{1+a}{2}
  \end{pmatrix}.
\end{equation}
The positive partial transpose (PPT) criterion yields necessary and sufficient conditions for systems of the sizes $2 \times 2$ and $2 \times 3$ to be separable; however, it is only necessary for higher-dimensional systems \citep{HHH96}.
For $a\in[0,1)$, the entangled states $\vrho^a_{H}$ are not detected by PPT \citep{Hor97}, making them weakly entangled and thus harder to detect, which justifies our choice.

\citet{liu2025toolbox} consider adding white noise to the state, i.e.,
\begin{equation}\label{eqn:visibility}
  \vrho_0 = v\vrho^a_{H} + \frac{1-v}{9} \mathbf{I},
\end{equation}
for a given noise level $v \in \mathopen[0,1\mathclose]$.
In \cref{fig:entanglement_detection_noise} we present the results for different noise levels $v \in \{0.95, 0.97\}$ for a fixed state with parameter $a=0.5$.
We used again a correction interval of $N=1$ for QC-MNP and $N=10$ for QC-LP.
Comparing the two plots, one can see that adding white noise decreases the distance to $\mathcal{S}_{AB}$ and therefore leads to smaller primal values.
Interestingly, QC-LP is performing worse than BPCG for the pure state. This is because the LP solved by QC-LP is often infeasible, leading to computational overhead without any benefit in terms of primal progress.
On the other hand, QC-MNP is reaching optimality in both cases by far the fastest. This indicates that QC-MNP is more suited than QC-LP for non-polytope domains like the set of separable states $\mathcal{S}_{AB}$.

\begin{figure}[H]
    \begin{subfigure}{0.47\textwidth}
        \includegraphics[width=0.99\textwidth]{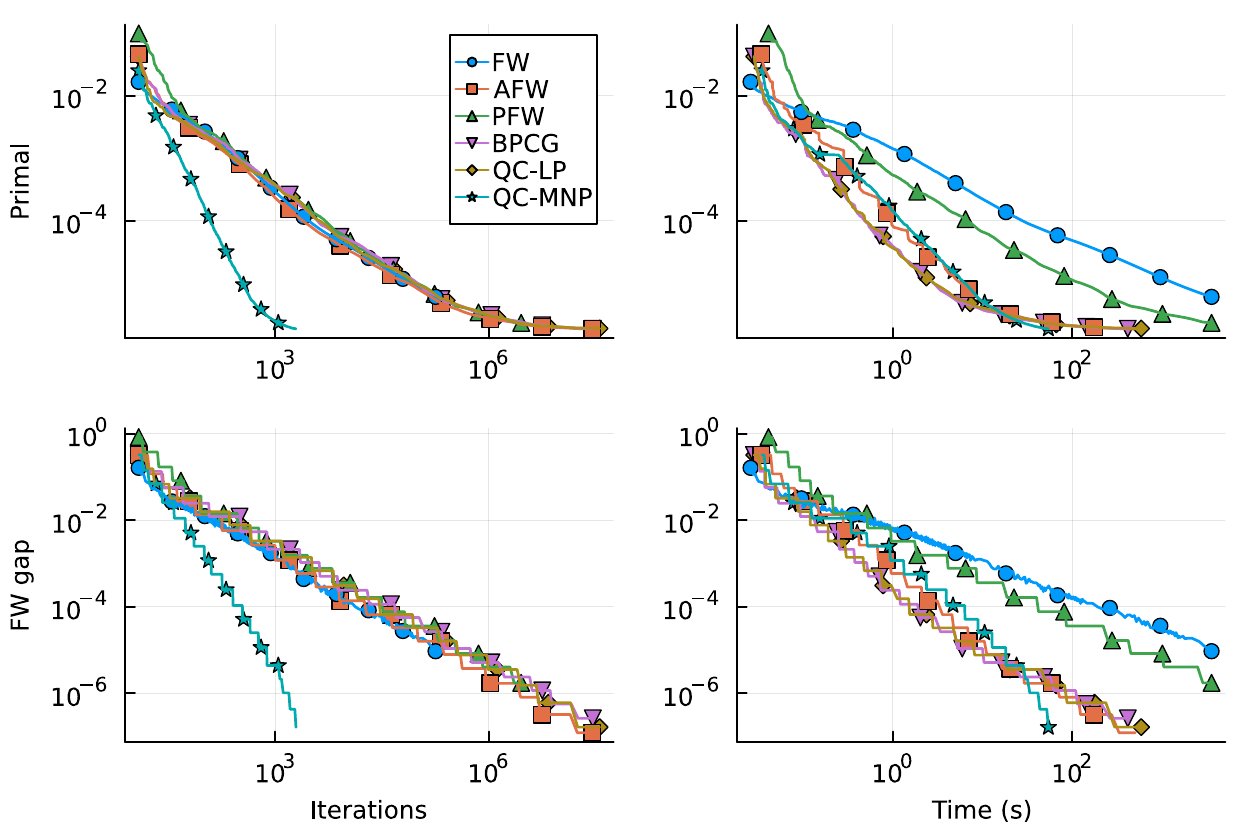}
    \end{subfigure}
    \hfill
    \begin{subfigure}{0.47\textwidth}
        \includegraphics[width=0.99\textwidth]{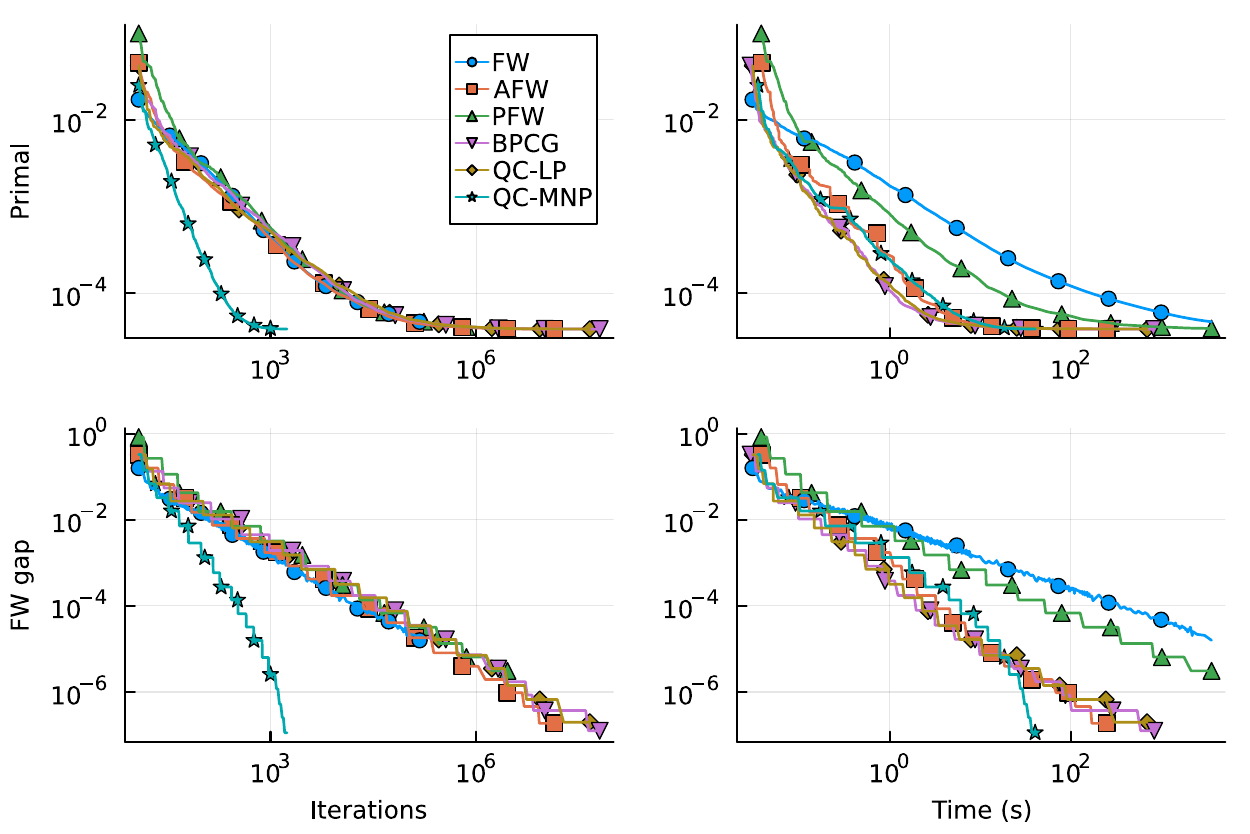}
    \end{subfigure}
    \caption{Entanglement detection for a state $\vrho_0$ given in \eqref{eqn:visibility} with different noise levels $v \in \{0.95, 1.0\}$ applied on a fixed state from \eqref{eqn:horodecki} with parameter $a=0.5$ \label{fig:entanglement_detection_noise}}
\end{figure}

\subsection{Projection onto the intersection of the Birkhoff polytope and a shifted \texorpdfstring{$\ell_2$}{l2} ball}

\paragraph{Experiment details}
The Birkhoff polytope $B(n)$ is the set of all $n \times n$ doubly stochastic matrices, and its vertices are permutation matrices and therefore particularly sparse.
Furthermore, one can solve linear minimization problems over the Birkhoff polytope with a complexity of $\mathcal{O}(n^3)$ using the Hungarian algorithm \citep{combettes21complexity}.

For shifting the center of the $\ell_2$ ball, we first sample vertices $\vvv_1, \dots, \vvv_m$
of the Birkhoff polytope by calling the LMO with uniform random directions $\vd_1, \dots, \vd_m$.
The ball is then shifted to $\vs = \bar{\vvv} - c \frac{\bar{\vd}}{\|\bar{\vd}\|_2}$
where $\bar{\vvv} = \frac{1}{m}\sum_{i=1}^m \vvv_i$ and $\bar{\vd} = \frac{1}{m}\sum_{i=1}^m \vd_i$.
The radius of the ball is set to $r = 1$, such that the ball and the polytope intersect if and only if $c \leq 1$.
Note the diameter of the Birkhoff polytope is $\sqrt{2n}$ and therefore much larger than the given radius of the ball.
This setting can be understood as a projection onto the Birkhoff polytope with some noise or flexibility in the projection direction.

The number of sampled vertices $m$ controls the dimension of the face where $\bar{\vvv}$ is located, i.e., the number of non-zero entries in $\bar{\vvv}$.
In particular, the expected number of non-zero entries in $\bar{\vvv}$ is
$$
    \mathbb{E}\left[\|\bar{\vvv}\|_0\right] = n^2 \left(1- \left(1- \frac{1}{n}\right)^m\right) \approx n^2 \left(1- e^{-m/n}\right) = n^2 \cdot q,
$$
for $m = -n \ln(1-q)$.
Let $B_2(r,\vc)$ denote the $\ell_2$ ball of radius $r$ and center $\vc$.
The problem we consider is,
$$
    \min_{\mX \in B(n) \cap B_2(r,\vs)} f(\mX) = \frac{1}{n^2}\|\mX - \mX_0\|_F^2,
$$
where $\mX_0 \in \R^{n \times n}$ is sampled with uniform distributed entries over $[0,1]$.

Since the SCG method alternates between updating the point on the Birkhoff polytope and the shifted $\ell_2$ ball,
we use a cyclic block-coordinate scheme \citep{lacoste15, beck2015cyclic} with different update steps on the two sets.
While we perform vanilla FW steps for the $\ell_2$ ball, we compare different methods for updating the point on the Birkhoff polytope. 
Just like in the other experiments, we compare vanilla FW steps, which are used in the original version of SCG, with BPCG and the mentioned hybrid methods.
Note, we use the new penalty schedule $\lt = \ln(t+2)$ proposed in \cref{theorem: SCG convergence}, but not the proposed monotonic step size.
BPCG relies on step sizes that consider the current FW gap or pairwise gap to perform proper update steps on the active set.
The monotone step size would lead to suboptimal updates and thus give the QC methods an advantage, as they do not depend on any line search.
Consequently, we use the new secant line search proposed by \citet{hendrych2025secant}.

\paragraph{Additional results}
In this paragraph, we present additional results for related problem settings.
In particular, we decompose the above problem and solve relaxed settings, a projection just onto the Birkhoff polytope, and the intersection problem between the Birkhoff polytope and a shifted $\ell_2$ ball for different intersection scenarios.
Furthermore, we present how SCG with vanilla FW steps on both sets perform for the original problem, comparing the new proposed step size schedule with the one from \citep{woodstock2025splitting}.

First, we consider the problem of projecting onto the Birkhoff polytope, i.e., without the additional $\ell_2$ ball constraint.
We ran the experiment for $n \in \{300, 500\}$ with a time limit of 3600 seconds. For the hybrid methods, we used a fixed interval of $N = 20$ for the quadratic correction.
The results are shown in \cref{fig:birkhoff_experiment}.
While both QC methods outperform the baselines with respect to the FW gap in terms of the number of iterations,
the runtime is worse for both instances.
This demonstrates the need for fine-tuning the rate of the quadratic correction, especially for easier problems like projections, where the Hessian is the identity matrix.

\begin{figure}
    \begin{subfigure}{0.47\textwidth}
        \includegraphics[width=0.99\textwidth]{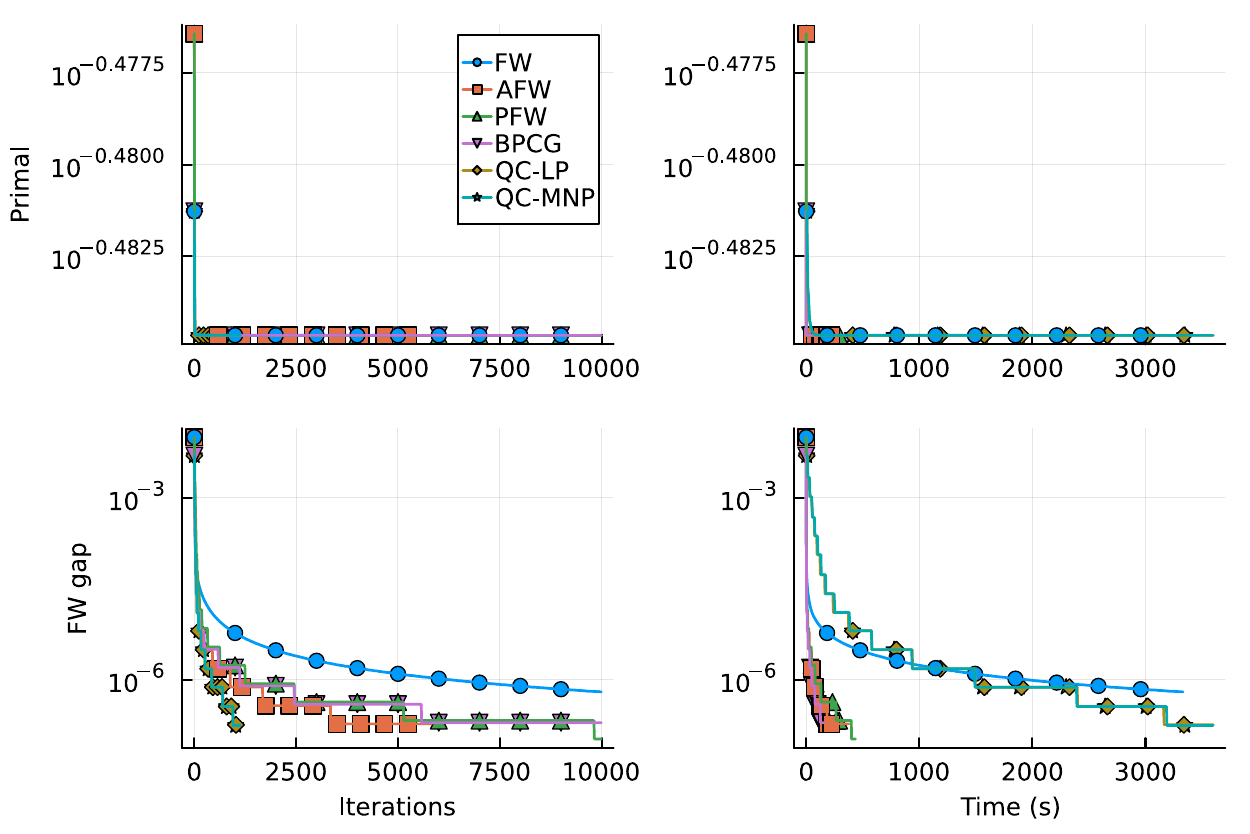}
    \end{subfigure}
    \hfill
    \begin{subfigure}{0.47\textwidth}
        \includegraphics[width=0.99\textwidth]{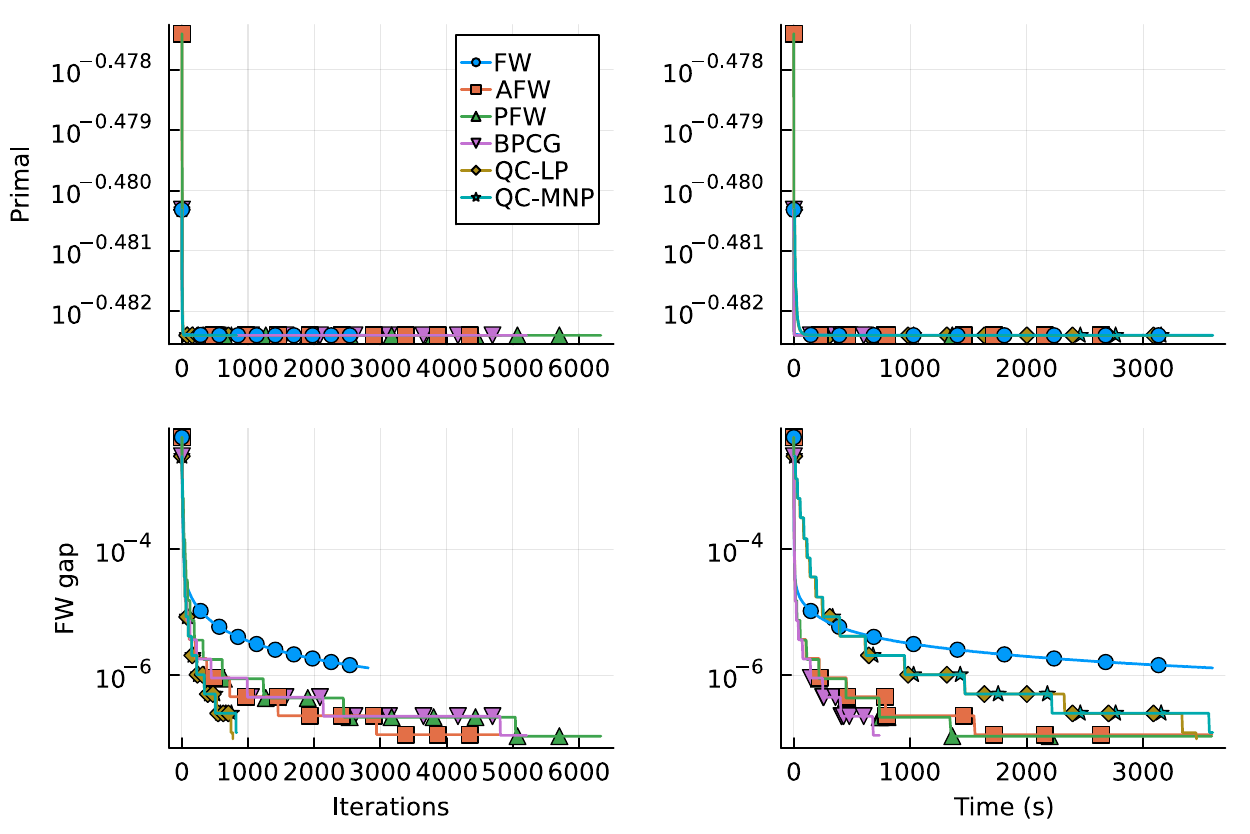}
    \end{subfigure}
    \caption{Projection onto the Birkhoff polytope for $n \in \{300, 500\}$}
    \label{fig:birkhoff_experiment}
\end{figure}

In the next set of experiments, we consider the problem of finding a point in the intersection of the Birkhoff polytope and a shifted $\ell_2$ ball.
We use the same setup for placing the $\ell_2$ ball as described above. 
However, since we do not have an additional objective, we use the ALM method by \citet{braun2023alternatinglinearminimizationrevisiting} to solve the problem.
We consider the cases of $c \in \{0.9, 1.1\}$, i.e., when the two sets have a full-dimensional intersection and when the sets are disjoint.
For all instances, we again used $q=0.1$, $r=1$, $n=500$, and $N=1$.
Additionally, we disabled the quadratic corrections until the active set has at least $30$ atoms.
This helps to avoid the computational overhead of quadratic corrections in the first iterations when ALM adds and drops atoms very quickly due to its alternating nature.
Besides BPCG and the QC methods, we also compare the vanilla FW steps from the original version of ALM.
The results of the two experiments are shown in \cref{fig:L2_birkhoff_intersection}. 

\begin{figure}
    \begin{subfigure}{0.47\textwidth}
        \includegraphics[width=0.99\textwidth]{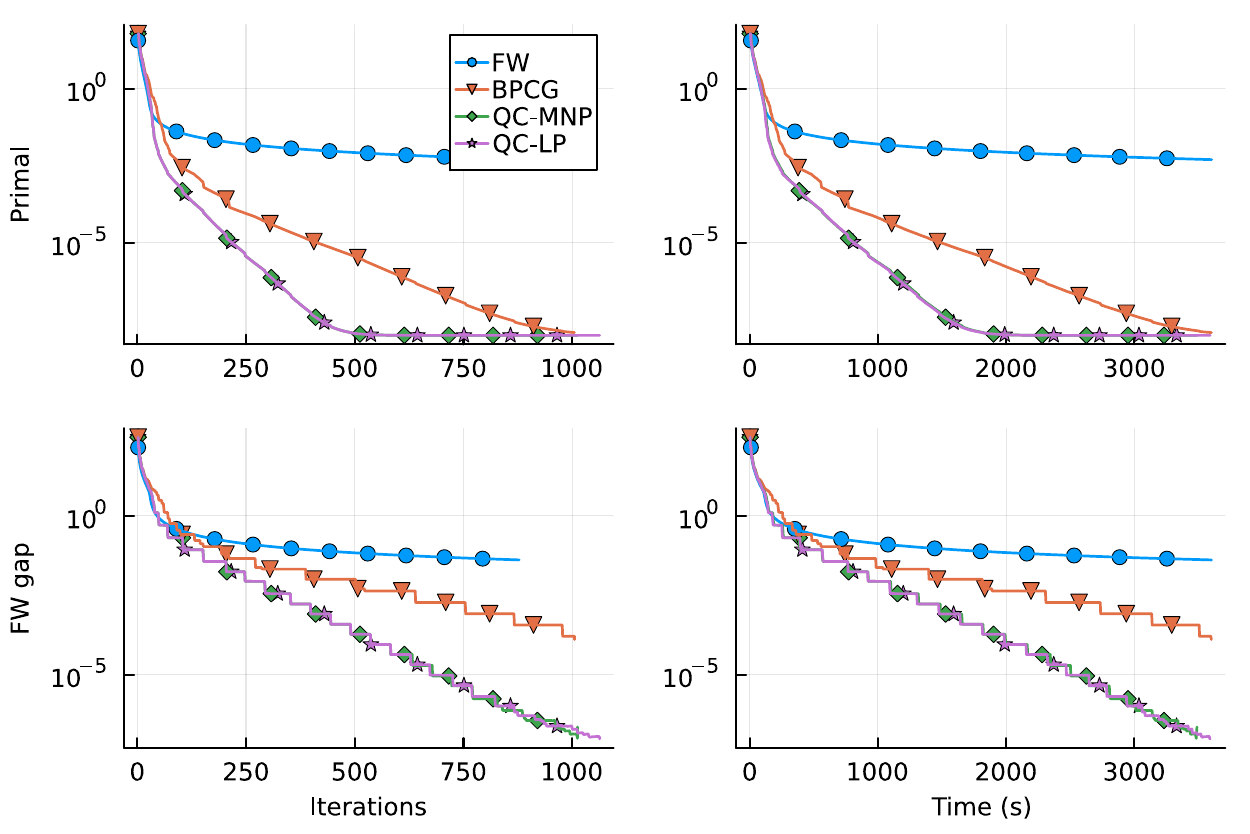}
    \end{subfigure} 
    \hfill
    \begin{subfigure}{0.47\textwidth}
        \includegraphics[width=0.99\textwidth]{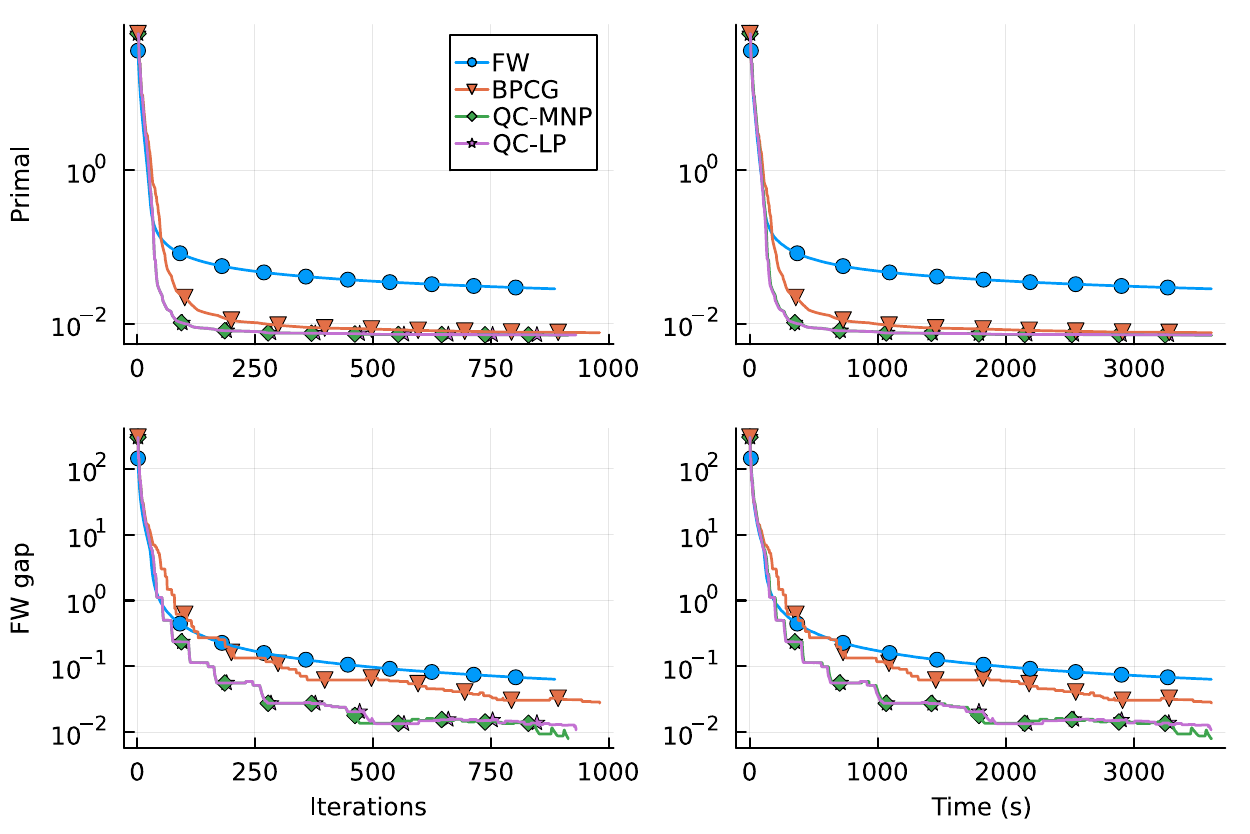}
    \end{subfigure}
    \caption{ALM applied to the intersection of a shifted $\ell_2$ ball and the Birkhoff polytope for the intersecting scenario $c = 0.9$ and the disjoint one with $c=1.1$}
    \label{fig:L2_birkhoff_intersection}
\end{figure}

In both experiments, the two QC methods show a very similar behavior.
For $c=0.9$, i.e., when the two sets have a full-dimensional intersection, both QC methods and BPCG enjoy linear convergence, while the vanilla FW steps show sublinear convergence.
However, QC-MNP and QC-LP converge faster and achieve a smaller FW gap, in terms of the number of iterations and time.
In the case of $c=1.1$, i.e., when the two sets are disjoint, all four methods show sublinear convergence.
While QC-LP and QC-MNP accelerate the convergence, especially of the FW gap, the benefit of the QC methods is not as pronounced as in the previous experiment.

Finally, we compare the step size and penalty schedule proposed in \cref{theorem: SCG convergence} with the ones in the original paper \citep{woodstock2025splitting}.
We compare the new setting for vanilla FW steps, which are used in the original version of SCG. 
For the experiment, we again used $c=0.9$, $q=0.1$, and $n = 300$. The results are depicted in \cref{fig:compare_birkhoff_L2}.
We use a logarithmic scale on the horizontal axis to visualize the difference in the convergence rates more clearly.
The results confirm our theoretical results that SCG enjoys a faster convergence with the new step size and penalty schedule.

\begin{figure}
    \centering
    \begin{subfigure}{0.5\textwidth}
        \includegraphics[width=0.99\textwidth]{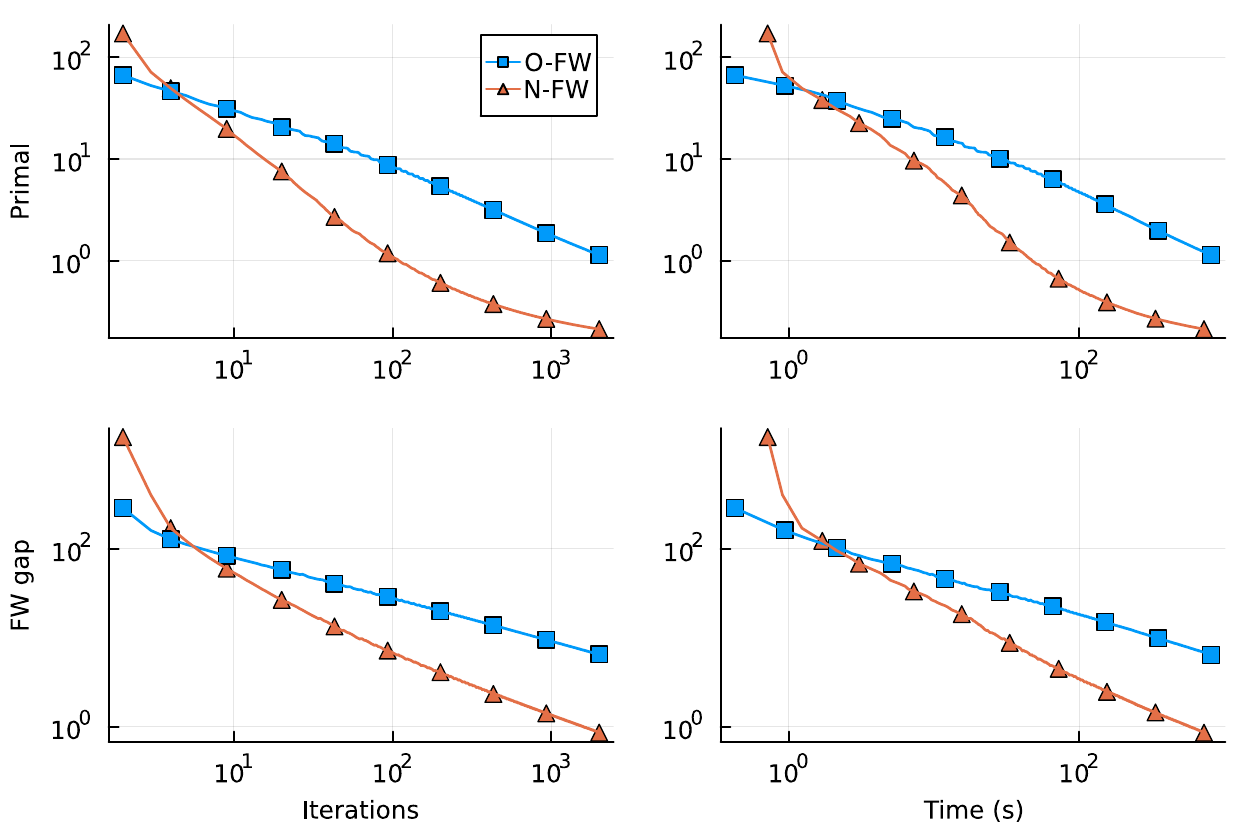}
    \end{subfigure}
    \caption{Comparison of the new step size and penalty-schedule (N-FW) proposed in \cref{theorem: SCG convergence} with the original ones (O-FW) in \citet{woodstock2025splitting}}
    \label{fig:compare_birkhoff_L2}
\end{figure}

\subsection{Projecting with quadratic correction in Second-Order Conditional Gradient Sliding}

In the experiment of Subsection~\ref{subsection: gisette experiment}, we tested the acceleration provided by both 
quadratic corrections QC-LP and QC-MNP by comparing the hybrid methods with BPCG for solving the Inexact PVM step at Line~\ref{line: pvm step} in Algorithm~\ref{alg: socgs}. 
For the outer step, we use the lazified BPCG.

Due to numerical instabilities for the PVM stop condition at Line~\ref{line: pvm step} in Algorithm~\ref{alg: socgs} (we were getting threshold values~$\varepsilon_t \geq 0$ too close to $0$), we
replaced the stop condition at Line~\ref{line: pvm step} 
by a fixed number~$k \in \{50,200\}$ of inner steps.
Tightening the lower bound estimations~$lb(\vx_t)$ of the primal gaps
or designing stop conditions for the Inexact PVM step, which
preserve the convergence rate of SOCGS without any lower bound estimation, are left for future work.

During the initial phase of SOCGS, only the outer steps are selected. 
To avoid the computational overhead of quadratic corrections, we do not perform any QC steps in the first $25$ iterations of SOCGS.
This leads to identical trajectories between BPCG, QC-LP, and QC-MNP at the beginning in \cref{fig:gisette_socgs}.
After iteration $25$ of SOCGS, both QC-LP and QC-MNP perform in each PVM step a QC step at the first inner iteration.
A second quadratic correction is performed after $30$ atoms were added 
to the active set~$\tilde S^{h+1}_{t+1}$ of the PVM step.
This yields a quick adjustment to the new quadratic model and an additional correction if a relevant amount of new vertices were added to the active set.
The experiments were run with a limit of $100$ SOCGS iterations and a time limit of $2000$ seconds.

We consider the same structured logistic regression problem over the $\ell_1$ ball as in \citet{carderera2020second}.
We solve
$$
\min_{\vx \in B_1(1)}
    \frac{1}{m} \sum_{i=1}^m \ln\big(1 + \exp(-y_i \innp{\vx,\vz_i})\big)
    + \frac{1}{2m} \norm{\vx}^2.
$$
The labels~\(y_i \in \{-1,1\}\) and feature vectors \(\vz_i \in \R^n\) are taken from the \verb|gisette| training dataset \citep{guyon2007competitive},
so that $n = 5000$ and $m= 6000$. As mentioned in \cite[Subsection 6.1]{sun_generalized_2019}, the function $f$ is an example of 
$(M,3)$-generalized self-concordant function as in \eqref{eq: self-concordant functions}, where $M = \sqrt{m} \max \left\{\norm{\vz_i} \,|\, 1\leq i \leq m\right\}$.

The evaluation of the gradient of $f$, 
$$
    \nabla f(\vx)  = - \frac{1}{m}\sum_{i=1}^m \frac{y_i \vz_i}{1+ \exp(y_i \innp{\vx,\vz_i})} +  \frac{1}{m} \vx \in \R^n,
$$ 
is computationally demanding, as one has to compute $m$ inner products of size $n$ for each gradient evaluation.
For the quadratic approximations~$\hat f_t$, we use an exact Hessian approximation~$\mH_t = \nabla^2 f(\vx_t)$ with
$$
    \nabla^2 f(\vx) = \frac{1}{m}\sum_{i=1}^m
            \frac{\vz_i \vz_i^T}{
                \big( 1+ \exp(-y_i \innp{\vx,\vz_i})
                \big)
                \big(
                1+ \exp(y_i \innp{\vx,\vz_i})
                \big)
            }
        + \frac{1}{m} \mathbf{I}^n  \in \R^{n \times n},
$$
where \(\mathbf{I}^n \in \R^{n \times n}\) is the identity matrix.
The gradients~$\nabla \hat f_t$ of the quadratic approximations~$\hat f_t$
are given by $\nabla \hat f_t(\vx) = \nabla f(\vx_t) + \mH_t(\vx-\vx_t) $.

It is worth noticing that in our current implementation, we are storing the full Hessian matrices for each PVM step.
This could be avoided by computing Hessian-vector products and Hessian-induced norms for any given $\vx$ using the decomposition
of the Hessian as rank-one terms given in its expression.

\paragraph{Experimenting with line search after the Inexact PVM step.}
We experimented with adding a secant line search \citep{hendrych2025secant} after the Inexact PVM step and before the \verb|if|  
clause at Line~\ref{line: selecting pvm or outer step} in \cref{alg: socgs}. We did not see a significant change 
in the trajectories as plotted in Figure~\ref{fig:gisette_socgs}, except for a slight computational overhead of the line search.

\subsection{Tensor completion}

We consider the non-negative tensor completion problem from \citet{bugg2022nonnegativetensorcompletioninteger},
in which one reconstructs a non-negative tensor from some entries.
The problem is NP-hard, and the corresponding LMO can be implemented as a mixed-integer linear problem, with a polyhedral feasible set.
We compare the Blended Conditional Gradient algorithm \citep{pok18bcg}, which was used in the original paper to BPCG, and the two quadratic corrections variants.

\begin{figure}
\begin{subfigure}{0.47\textwidth}
    \includegraphics[width=0.99\textwidth]{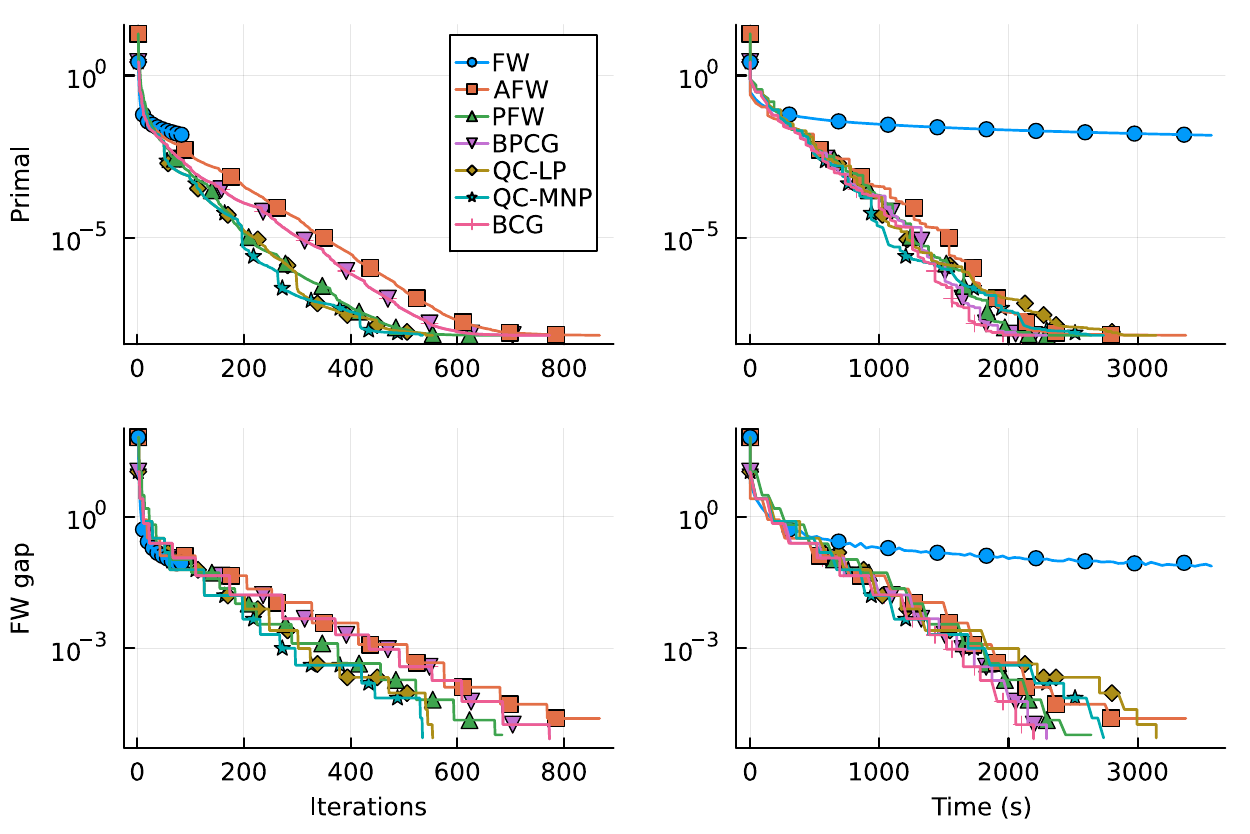}
    \caption{$d=10,n_v=7,\rho=11$}
\end{subfigure}
\hfill
\begin{subfigure}{0.47\textwidth}
    \includegraphics[width=0.99\textwidth]{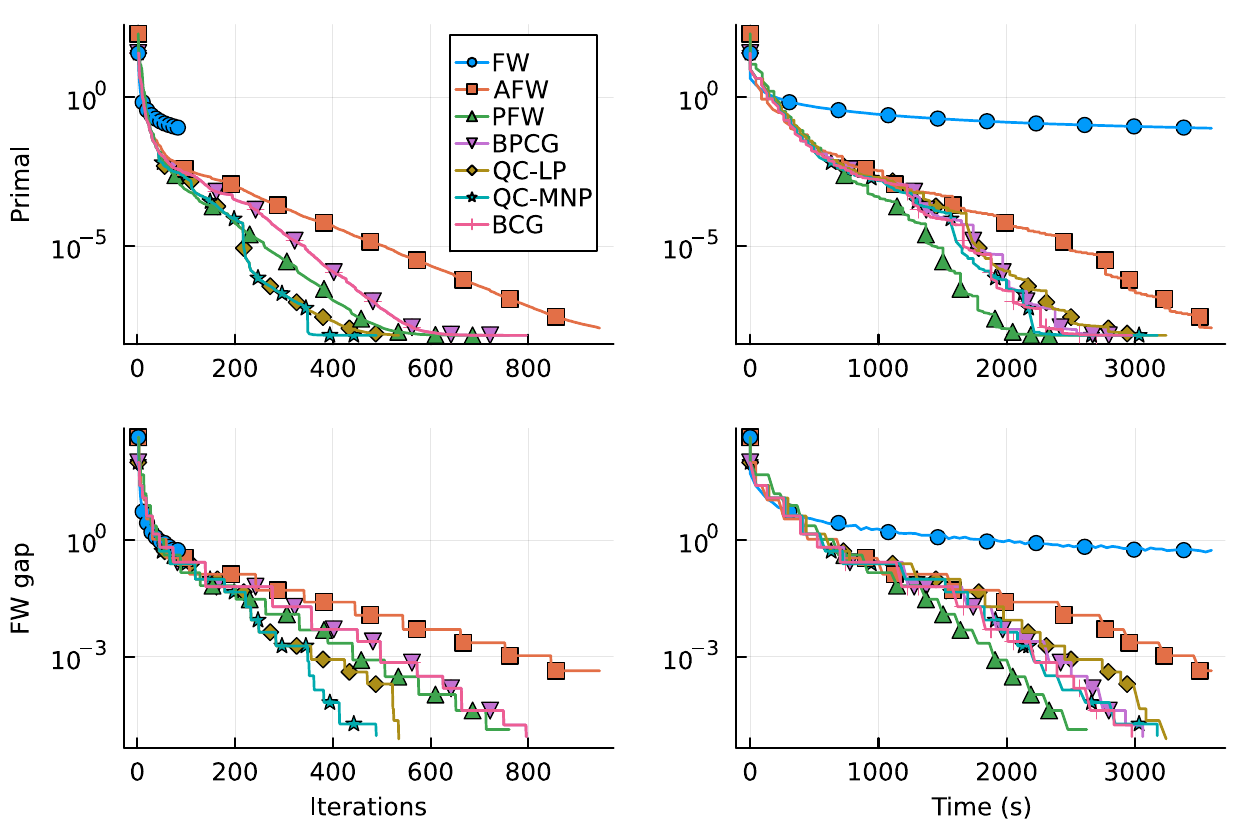}
    \caption{$d=10,n_v=7,\rho=30$}
\end{subfigure}
\begin{subfigure}{0.47\textwidth}
    \includegraphics[width=0.99\textwidth]{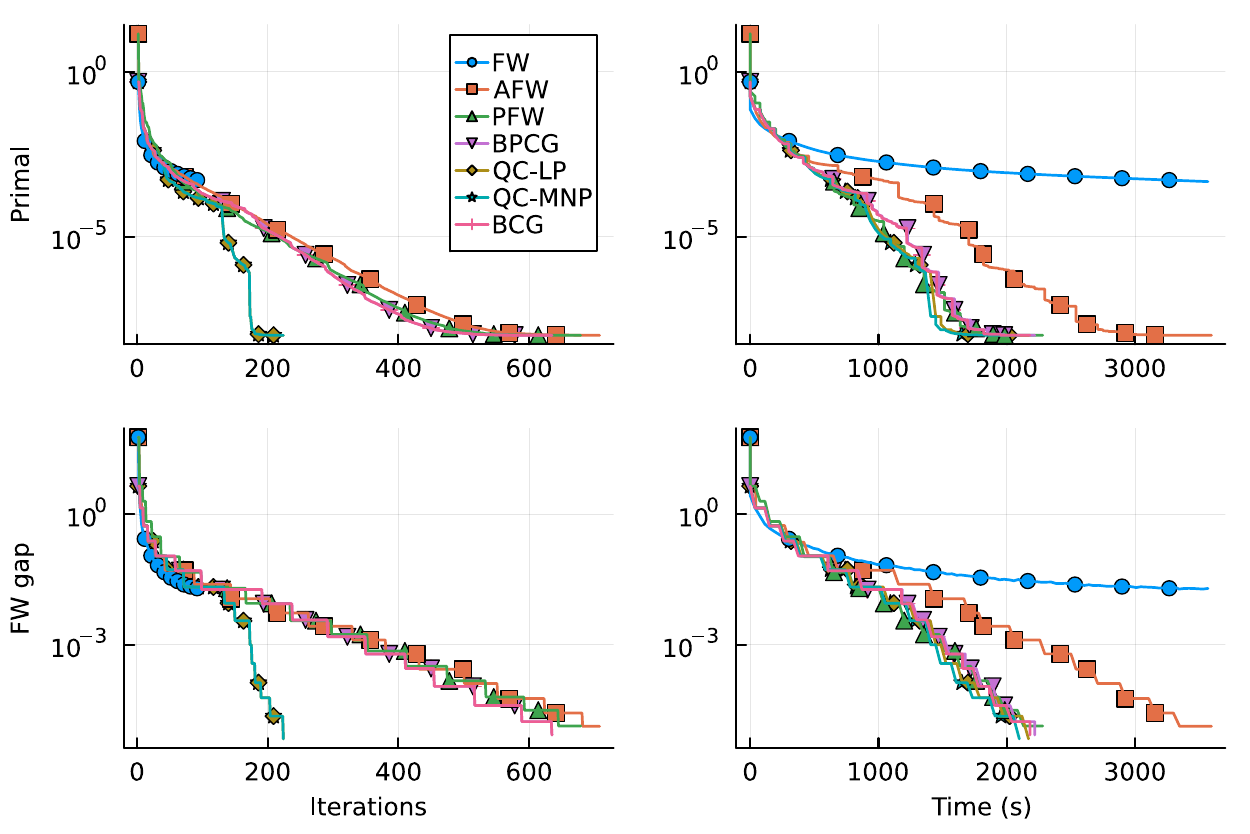}
    \caption{$d=10,n_v=40,\rho=11$}
\end{subfigure}
\hfill
\begin{subfigure}{0.47\textwidth}
    \includegraphics[width=0.99\textwidth]{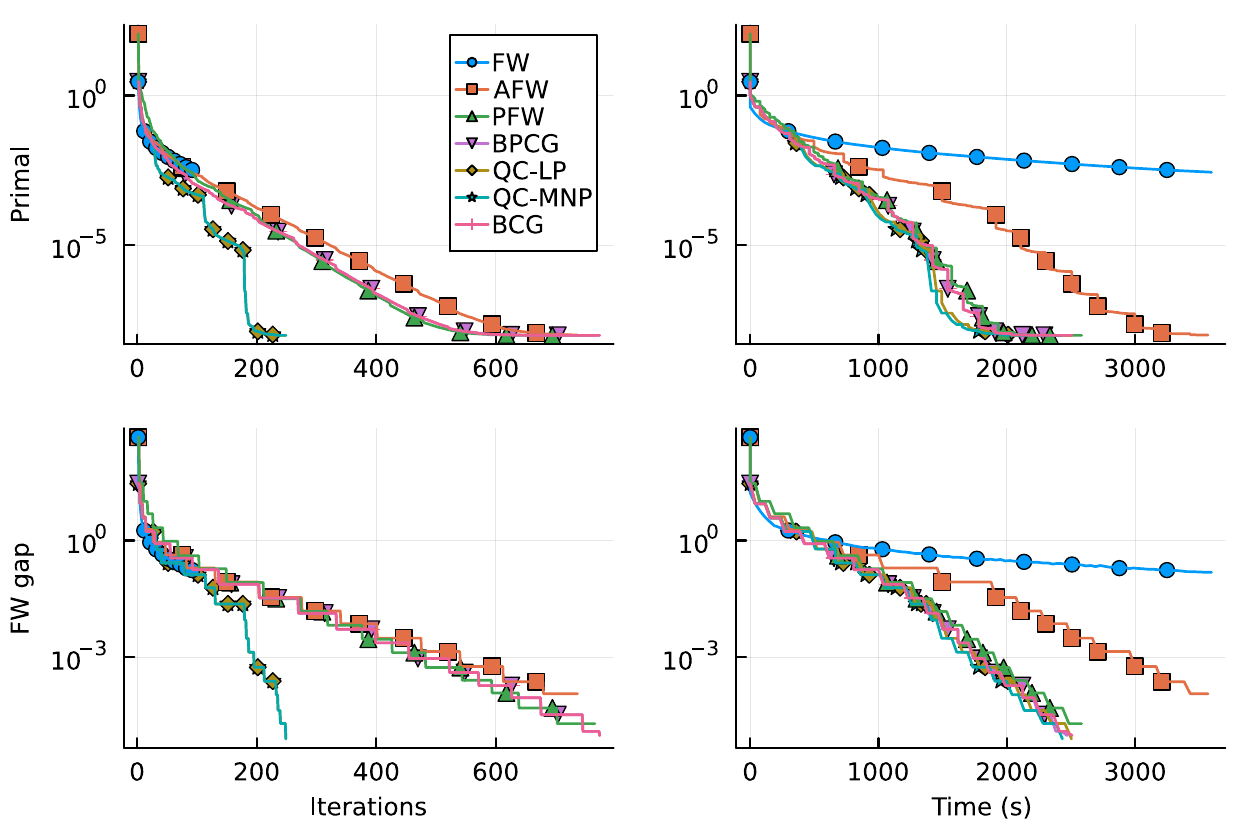}
    \caption{$d=10,n_v=40,\rho=30$}
\end{subfigure}
\begin{subfigure}{0.47\textwidth}
    \includegraphics[width=0.99\textwidth]{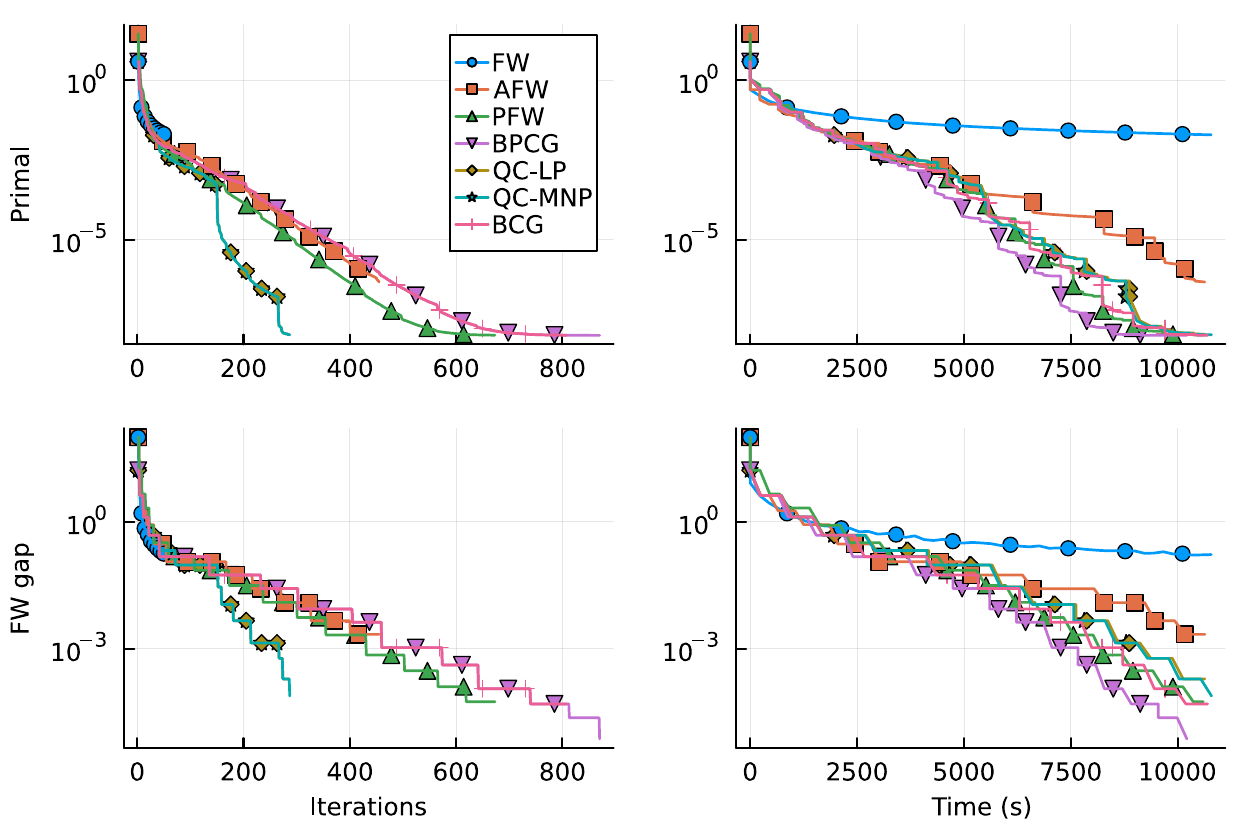}
    \caption{$d=12,n_v=10,\rho=14$}
\end{subfigure}
\hfill
\begin{subfigure}{0.47\textwidth}
    \includegraphics[width=0.99\textwidth]{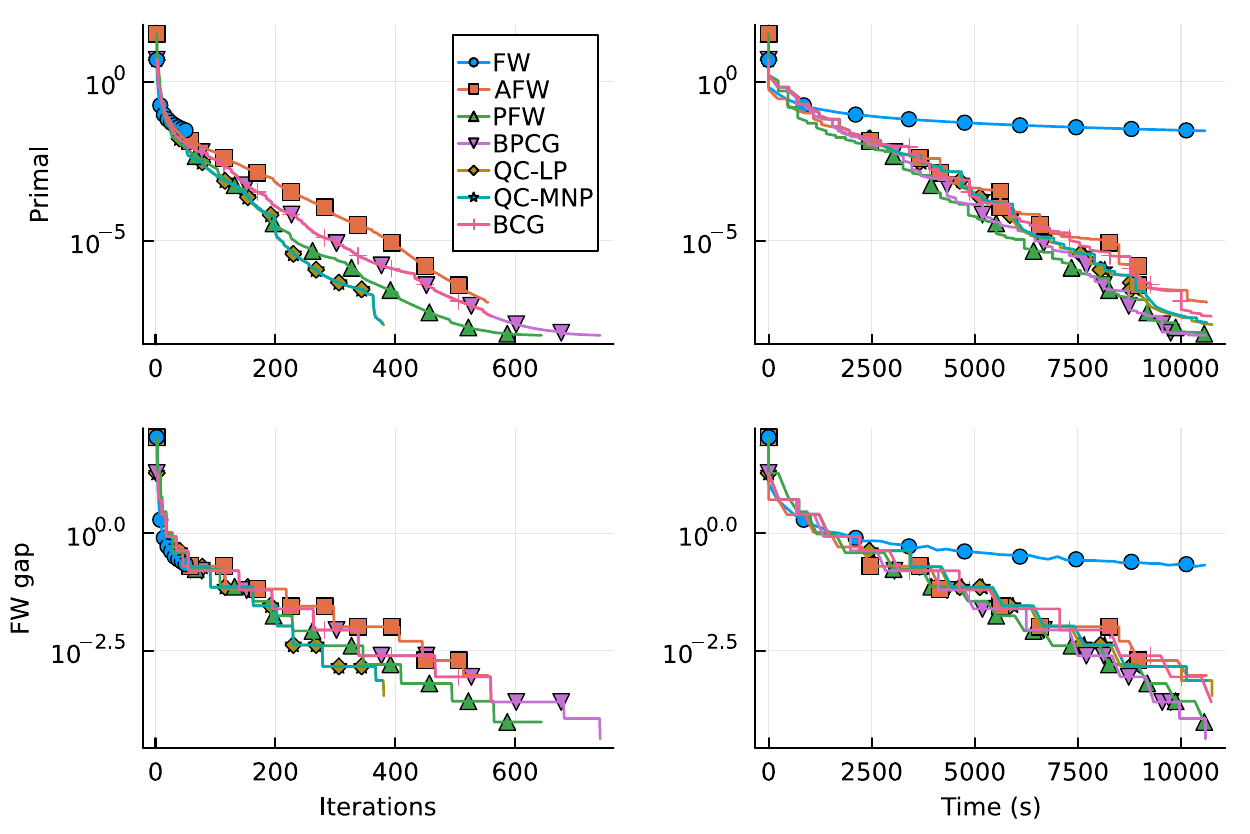}
    \caption{$d=12,n_v=10,\rho=15$}
\end{subfigure}
\caption{Results on the tensor completion problem. The dimension of the tensor is $d\times d \times d$.
The parameter $n_v$ indicates how many vertices were sampled to construct the underlying ground truth, and $\rho$ is the radius of the tensor norm ball. \label{fig:tensorresults}}
\end{figure}

Results are presented in \cref{fig:tensorresults} for different types of instances. All algorithms use lazification.
The BCG, BPCG and PFW methods perform well for instances with a low radius and a low number of vertices forming the optimal solution.
The QC algorithms outperform these methods for larger settings of these parameters, and are particularly advantageous for problems in which LMO calls are much costlier than solving any number of linear programs.

\subsection{Performance profile and success rates}
\label{subsection: performance profile and success rates}

In this final section, we investigate how quadratic corrections can accelerate Frank-Wolfe variants beyond BPCG.
In particular, we augment three additional methods with both quadratic correction steps (QC-LP and QC-MNP): an active-set variant of vanilla Frank-Wolfe (FW), Away Frank-Wolfe (AFW), and Pairwise Frank-Wolfe (PFW).
For clarity of presentation, we omit the prefix “QC” in the method names; for example, Away Frank-Wolfe with QC-MNP is denoted AFW-MNP.
In total, we compare twelve methods, four baseline algorithms, and eight variants enhanced with quadratic corrections.
We consider 37 quadratic problem instances from the $K$-sparse regression problem, the entanglement detection, the Birkhoff projection, and the tensor completion problem
with varying problem parameters and seeds. The experiments with SCG and SOCGS are not considered here.

First, we present a performance profile illustrating the number of problem instances solved within a given time.
All methods had a time limit of one hour and 10000 iterations, except for the entanglement detection problem, which allowed $10^8$ iterations.
An instance is solved if either the optimality criterion, i.e., FW gap is smaller than $10^{-7}$ (or $10^{-5}$ for the tensor completion problem),
or if the iteration or time limit is reached.
The results are given in \cref{fig: performance profile}.

The graphic demonstrates that QC-MNP improves the performance of all baseline methods.
For both small and large instances, methods with QC-MNP belong to the fastest solvers.
The QC-LP variants are not as effective as QC-MNP, but still outperform the baseline methods in most cases.
This was expected as the benefit of QC-LP is only gained when the affine minimizer lies in the convex hull of the active set, which is not necessary for QC-MNP.
In the case of PFW-LP, the benefit of QC-LP is not as pronounced as for other methods.
The additional runtime for solving the linear problem is not compensated for by the reduced number of iterations.

\begin{figure}
    \centering
    \includegraphics[width=0.8\textwidth]{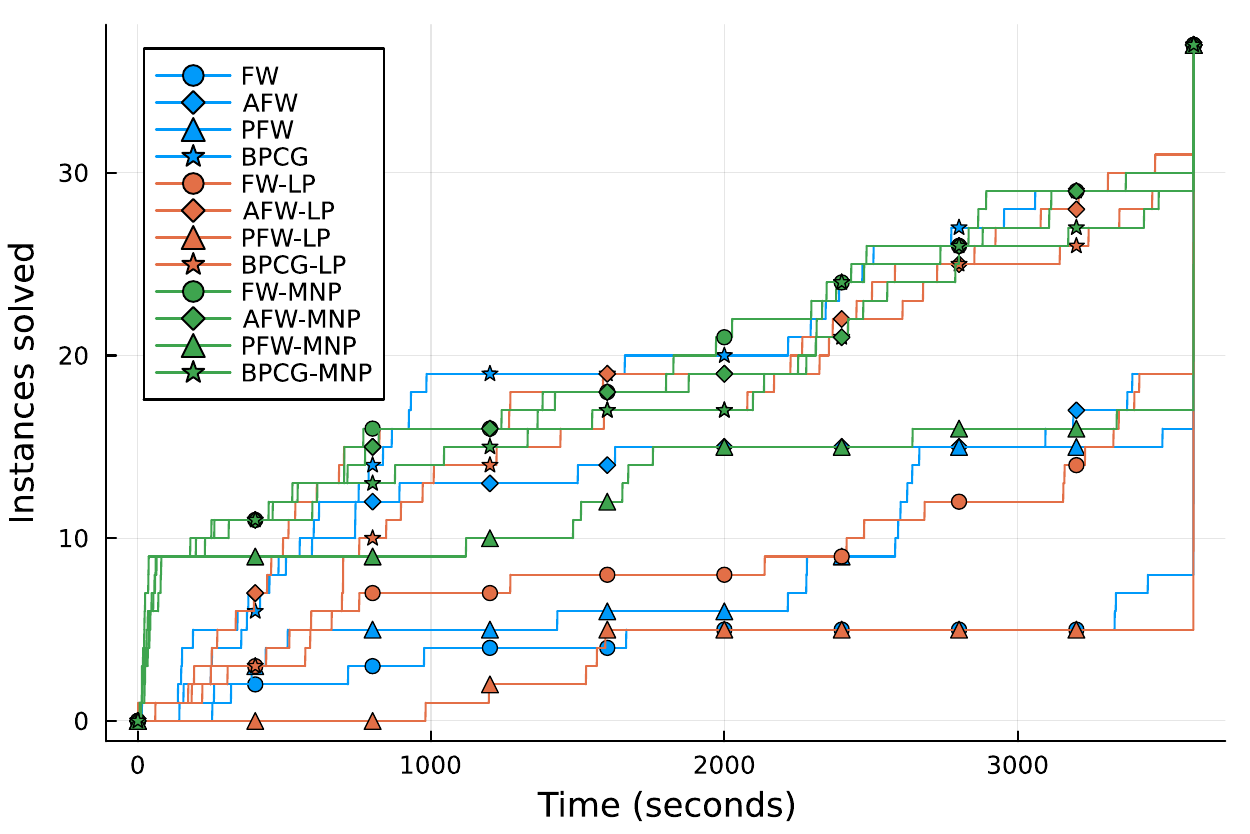}
    \caption{Performance profile of different FW methods with quadratic corrections over a problem set of 37 instances.
    \label{fig: performance profile}}
\end{figure}

Finally, we present the success rates for the aforementioned methods, i.e., the ratio of QC steps that are fully-corrective.
There are no guarantees as to when the affine minimizer lies within the convex hull of the active sets.
However, the performance of the quadratic correction steps depends heavily on this.
In cases where the affine minimizer lies outside the convex hull, QC-LP uses the local pairwise step, yielding significantly less progress, and QC-MNP must truncate its update.

The results are given in \cref{table: success rates lp} and \cref{table: success rates mnp}.
We present the shifted geometric mean of the number of fully-corrective QC steps and the total number of QC steps for different problem instances per experiment.

For both the $K$-sparse regression and the Birkhoff projection problem, the success rates are very high for both QC methods.
This result aligns with the fast convergence observed in the previous section, where QC-LP and QC-MNP outperformed the baselines, especially in terms of iteration counts.
In the entanglement detection problem, the success rates are moderately high for QC-MNP and very low for QC-LP, explaining why QC-LP performs so poorly, sometimes even slower than some baselines in this problem class.
In the tensor completion problem,  the success rates are moderately high. However, the overall numbers are comparably small due to the low number of total iterations.

\begin{table}[H]
    \centering
    \begin{tabular}{l|c|c|c|c}
        \toprule
        Method & FW-LP & AFW-LP & PFW-LP & BPCG-LP \\
        \midrule
        $K$-sparse regression & 16 / 40 & 29 / 30 & 3 / 4 & 29 / 30 \\
        Entanglement detection & 2 / 14803 & 2 / 544 & 1 / 274 & 2 / 473 \\
        Birkhoff projection & 0 / 45 & 19 / 28 & 1 / 2 & 18 / 19 \\
        Tensor completion & 4 / 6 & 3 / 5 & 1 / 3 & 3 / 4 \\
        \bottomrule
    \end{tabular}
    \caption{Shifted geometric mean of the number of successful QC-LP steps and the total number of QC-LP steps across different problem instances.
    \label{table: success rates lp}}
\end{table}
\begin{table}[H]
    \centering
    \begin{tabular}{l|c|c|c|c}
        \toprule
        Method & FW-MNP & AFW-MNP & PFW-MNP & BPCG-MNP \\
        \midrule
        $K$-sparse regression & 29 / 30 & 29 / 30 & 3 / 5 & 29 / 30 \\
        Entanglement detection & 68 / 746 & 492 / 897 & 651 / 935 & 596 / 889 \\
        Birkhoff projection & 20 / 24 & 21 / 28 & 1 / 2 & 18 / 19 \\
        Tensor completion & 4 / 6 & 3 / 4 & 1 / 3 & 3 / 4 \\
        \bottomrule
    \end{tabular}

    \caption{Shifted geometric mean of the number of successful QC-MNP steps and the total number of QC-MNP steps across different problem instances.
    \label{table: success rates mnp}}
\end{table}

\fi


\end{document}